\let\oldtocsection=\tocsection
\let\oldtocsubsection=\tocsubsection
\let\oldtocsubsubsection=\tocsubsubsection
\renewcommand{\tocsection}[2]{\hspace{0em}\oldtocsection{#1}{#2}}
\renewcommand{\tocsubsection}[2]{\hspace{2em}\oldtocsubsection{#1}{#2}}
\renewcommand{\tocsubsubsection}[2]{\hspace{2em}\oldtocsubsubsection{#1}{#2}}
\numberwithin{figure}{section}
\numberwithin{equation}{section}
\newtheorem{thm}{Theorem}[section]
\newtheorem{defn}[thm]{Definition}
\newtheorem{lmm}[thm]{Lemma}
\newtheorem{remark}[thm]{Remark}
\newcommand{\sgn}{\operatorname{sign}}
\newcommand{\Neg}{\operatorname{Neg}}
\newcommand{\interior}{\operatorname{Int}}
\newcommand{\mult}{\operatorname{mult}}
\renewcommand{\mod}{\operatorname{mod\,}}
\newcommand{\absv}[1]{\lvert #1 \rvert}
\newcommand{\abs}[1]{\lvert\lvert #1 \rvert\rvert}
\newcounter{reminder}
\title{Irrational rotation dynamics for unimodal maps}
\author{Konstantin Bogdanov}
\author{Alexander Bufetov}
\address[Konstantin Bogdanov]{Institute of Mathematics of Polish Academy of Sciences, ul. Śniadeckich 8, 00-656 Warsaw, Poland}
\email{kbogdanov@impan.pl}
\address[Alexander Bufetov]{CNRS, Aix-Marseille Universit\'e, Centrale Marseille, Institut de Math\'ematiques de Marseille, UMR7373, 39 Rue F. Joliot Curie 13453, Marseille, France; Steklov Mathematical Institute of RAS, Moscow, Russia; Institute for Information Transmission Problems, Moscow, Russia}
\email{alexander.bufetov@univ-amu.fr; bufetov@mi-ras.ru}
\begin{document}
	
\begin{abstract}
	The first result of the paper (Theorem 1.1) is an explicit construction of  unimodal maps that are semiconjugate, on the post-critical set,  to the  circle rotation by an arbitrary irrational angle $\theta\in(3/5,2/3)$. Our construction is a generalization of  the construction by Milnor and Lyubich \cite{LM} of the Fibonacci unimodal maps semi-conjugate to the circle rotation by the golden ratio. Generalizing a theorem by Milnor and Lyubich for the Fibonacci map, we  prove that the Hausdorff dimension of the post-critical set of our unimodal maps is $0$, provided the denominators of the continued fraction of $\theta$ are bounded (Theorem 1.2) or, in the case of quadratic polynomials, have sufficiently slow growth (Theorem 1.3).   
\end{abstract}

\maketitle
	
\tableofcontents

\addtocontents{toc}{\protect\setcounter{tocdepth}{1}}

\section{Introduction}
Along with circle homeomorphisms, real \emph{unimodal maps} induce simplest non-trivial 1-dimensional dynamical systems. For a closed interval $I\subset\mathbb{R}$, a continuous map $f:I\to I$ is called unimodal with the extremum point $x_0$ of $f$ is strictly increasing on one side of $x_0$ and strictly decreasing on the other. One of natural and combinatorially full families of such maps are quadratic polynomials $f(x)=x^2+c$ with real parameters $c$ belonging to the Mandelbrot set (i.e., the critical orbit $\mathcal{O}:=\{f^n(0)\}_{n=0}^\infty$ is bounded).

Dynamical properties of the unimodal maps depend heavily on their class (e.g.\ $C^1$ or $C^2$) and the behaviour of the critical orbit. In \cite{LM} Lyubich and Milnor described Fibonacci real unimodal maps. As a defining property serves the combinatorial restriction on $\mathcal{O}$: the times of the closest recurrence of $f^n(x_0)$ to $x_0$ are exactly the Fibonacci numbers $1,2,3,5,8,13,...$ In absence of wandering intervals this property defines uniquely the topology on $\overline{\mathcal{O}}$ (including the order of points of $\mathcal{O}$ in $\mathbb{R}$). More precisely, $\overline{\mathcal{O}}$ is an explicitly given Cantor set and $f|_{\overline{\mathcal{O}}}$ is semiconjugate to the circle rotation by the golden ratio $\frac{1+\sqrt{5}}{2}$. Note that
$$\frac{1+\sqrt{5}}{2}\equiv\frac{1+\sqrt{5}}{2}-1=[1,1,1,...]:=\cfrac{1}{1+\cfrac{1}{1+\cdots}},$$
and the denominators $q_n$ of the truncated fractions $[\underbrace{1,1,...,1}_\text{$n$ times}]$ are the Fibonacci numbers.

Hence, a natural question arises: could the theory be generalized by replacement of $[1,1,1,...]$ by an arbitrary continued fraction $[a_1,a_2,a_3,...]$ and what can be said about unimodal maps with times of closest recurrence coinciding with denominators of $\theta=[a_1,a_2,...,a_n]$? It turns out that this condition is not sufficient to determine the order of points in $\mathcal{O}$. This can roughly be explained as follows: if the sequence $\{a_n\}$ contains many terms bigger than $1$, then the sequence $\{q_n\}$ (of denominators of the truncated continued fraction) grows too fast and the order of points $\{x_0,x_1,...,x_{q_k}\}$ on the real line does not have a big impact on the position of points $x_n$ with $n$ bigger than \ $q_k+q_{k-1}$. However, there is a natural recursive side-condition (see a somewhat elaborate Definition~\ref{defn:recurrence}) under which the topology on $\overline{\mathcal{O}}$ is determined in a canonical way (depending only on the irrational angle $\theta=[1,1,1,a_4,a_5,...], a_n\in\mathbb{N},n\geq 4$). Based on the initial choice of $\theta$, we call such maps \emph{$\theta$-recurrent}. In particular, the Fibonacci maps are $(\sqrt{5}-1)/2$-recurrent. Moreover, we have the following

\begin{thm}[$\theta$-recurrent maps]
	\label{thm:theta_recurrence}
	Let $\theta\in(3/5,2/3)$ be irrational. 	
	\begin{enumerate}		
		\item  There exists one and only one real quadratic polynomial $x^2+c$ which is $\theta$-recurrent. Moreover, we have
		$$\absv{x_1}>\dots>\absv{x_{q_{n-1}}}>\absv{x_{a_{n+1}q_n}}>\absv{x_{(a_{n+1}-1)q_n}}>\dots>\absv{x_{q_n}}>\absv{x_{a_{n+2}q_{n+1}}}>\dots>0.$$
		
		\item If $f:I\to I$ is a $\theta$-recurrent and has no homtervals, then the closure $\overline{\mathcal{O}}$ of the critical orbit is a Cantor set and the restriction $f|_{\overline{\mathcal{O}}}$ is semiconjugate to the circle rotation by the angle $\theta$.
	\end{enumerate}
\end{thm}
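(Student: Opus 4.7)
The two parts share a common combinatorial core, so I would prove (2) first, as it clarifies what the $\theta$-recurrence condition is designed to achieve.

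For (2), fix $\theta=[1,1,1,a_4,\dots]$ with convergents $p_n/q_n$. On $S^1$, the iterates of $0$ under $R_\theta$ are well understood: the $q_n$ are exactly the times of closest return of $0$ to itself, and the first $q_n$ iterates partition $S^1$ into intervals whose cyclic order is produced recursively by the continued fraction algorithm. The $\theta$-recurrence condition (Definition~\ref{defn:recurrence}) is designed precisely so that the orbit $\{x_n\}\subset\mathbb{R}$ reproduces this pattern scale by scale. I would define $\pi\colon\mathcal{O}\to S^1$ by $\pi(x_n)=n\theta\bmod 1$ and show, by induction on the level $n$, that $\pi$ is order-preserving on $\{x_0,\dots,x_{q_n+q_{n-1}-1}\}$ with respect to the natural interval structures on both sides. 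The inductive step handles the passages $q_n\mapsto q_n+q_{n-1}\mapsto\dots\mapsto q_{n+1}$ driven by the digit $a_{n+1}$, using exactly the orderings displayed in part~(1). Absence of homtervals ensures that the nested partitions shrink to points, so $\pi$ extends to a continuous surjection $\overline{\pi}\colon\overline{\mathcal{O}}\to S^1$ semiconjugating $f|_{\overline{\mathcal{O}}}$ to $R_\theta$. Since $R_\theta$ is minimal and $f$ has no wandering intervals, $\overline{\mathcal{O}}$ must be perfect and totally disconnected, hence a Cantor set.

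For (1), I would invoke Milnor--Thurston kneading theory for the real quadratic family $f_c(x)=x^2+c$, $c\in[-2,1/4]$. The $\theta$-recurrence condition determines, via the displayed ordering, a specific itinerary for the critical orbit and hence a specific kneading sequence $\nu_\theta$. Monotonicity of the kneading invariant in $c$ yields uniqueness of the parameter realizing $\nu_\theta$. For existence, I would first construct an explicit abstract $\theta$-recurrent unimodal map (e.g.\ piecewise affine on the prescribed postcritical Cantor set, built recursively from the digits $\{a_n\}$), verify that it has kneading sequence $\nu_\theta$, and then appeal to the standard realization theorem for admissible kneading sequences in the real quadratic family. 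The inequalities $\absv{x_1}>\dots>\absv{x_{q_{n-1}}}>\absv{x_{a_{n+1}q_n}}>\dots$ are read off from the corresponding ordering of the distances $\|k\theta\|$ on the rotation side and transferred back through the semiconjugacy of~(2).

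\textbf{Main obstacle.} The hardest step is the inductive verification that the purely local, recursive clause of Definition~\ref{defn:recurrence} forces the full global cyclic order on $\overline{\mathcal{O}}$ to coincide with that of $R_\theta$. One must track the Ostrowski representation of each integer $n$ with respect to $\{q_k\}$ and show that the prescribed order survives through every $a_{k+1}$-fold passage between scales; the boundary conditions $\theta\in(3/5,2/3)$ (equivalently $a_1=a_2=a_3=1$) are what make the base of the induction clean. Once this combinatorial bookkeeping is in place, both (1) and (2) follow from standard kneading theory and the theory of unimodal maps without homtervals.
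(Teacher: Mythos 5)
Your proposal is correct and takes essentially the same route as the paper. The paper also builds everything on the Ostrowski numeration attached to $\theta$: the inductive combinatorial core you identify (tracking the prescribed order through each $a_{k+1}$-fold passage between scales, with $a_1=a_2=a_3=1$ pinning down the base case) is precisely the content of the paper's Lemma~\ref{lmm:signs}; the semiconjugacy in part~(2) is then written down explicitly as $\varphi(x_\varkappa)=\sum_k\gamma_k\theta_k$ with $\theta_k=q_k\alpha-p_k$ on symbolic coordinates coming from Theorem~\ref{thm:map_construction}, rather than as your order-preserving map $x_n\mapsto n\theta$, but these are two phrasings of the same correspondence; existence in part~(1) comes from the same explicit piecewise construction, uniqueness from the same monotonicity of the kneading invariant plus the Milnor--Thurston realization theorem for the quadratic family, and the Cantor-set claim from the same observation that the no-homterval hypothesis forces the nested partitions $M^n$ to shrink to points.
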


As for the Fibonacci map, under certain smoothness condition for $\theta$-recurrent maps with $\theta$ of bounded type (i.e.\ with bounded denominators of its continued fraction) it is possible to estimate the asymptotics of its points of closest recurrence and use it to compute the Hausdorff dimension of $\overline{\mathcal{O}}$.

Denote by $\delta_n^i,0<i\leq a_{n+1}$ the ratio of $\absv{x_{iq_n}}$ and its closest left neighbour in the inequalities of Theorem~\ref{thm:theta_recurrence}. So, $\delta_n^i<1$.

\begin{thm}[Hausdorff dimension and asymptotics]
	\label{thm:hausdorff_dim_bounded}
	Let $\theta\in(3/5,2/3)$  be irrational and of bounded type, and $f:I\to I$ be $C^2$ smooth with non-flat critical point. The Hausdorff dimension of $\overline{\mathcal{O}}$ is equal to $0$.
	
	If, additionally, $f$ is equal to $x^2+c$ near the origin, the following asymptotic formulas hold as $n\to\infty$.
	$$\left(\delta_{n+2}^{a_{n+3}}\right)^{2^{a_{n+2}}}\sim\left(\delta_{n+1}^{a_{n+2}}\right)^{2^{a_{n+1}}-1}\delta_n^{a_{n+1}},$$
	and for $0<i<a_{n+1}$,
	$$\delta_n^i\sim\left(\delta_{n+1}^{a_{n+2}}\right)^{2^i}.$$
	The sign ``$\sim$'' is understood as equality modulo factor $1+O(p^n)$ for some $0<p<1$.
\end{thm}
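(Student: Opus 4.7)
The two assertions will be proved in parallel around the \emph{principal nest} of critical neighborhoods. For each $n$, let $I_n\ni 0$ be the maximal symmetric interval around the critical point whose first-return time under $f$ equals $q_n$; by Theorem~\ref{thm:theta_recurrence}(1), the endpoint of $I_n$ is $x_{q_n}$ and $|I_n|\asymp\xi_n:=\absv{x_{q_n}}$. I would first invoke the standard real-bounds machinery for $C^2$ unimodal maps with non-flat critical point (Koebe principle, Ma\~n\'e-type hyperbolicity outside a neighborhood of $0$) to establish uniform bounded distortion for the long iterate $f^{q_n-1}:f(I_n)\to f^{q_n}(I_n)$. The bounded-type hypothesis on $\theta$ enters precisely here: it caps the number of elementary moves composing each first-return map, ensuring that the orbit of $f(I_n)$ stays away from a fixed neighborhood of $0$ except for the prescribed close returns already encoded in the nest.

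Let $\alpha>1$ be the order of the critical point ($\alpha=2$ in the quadratic case). Critical folding gives $|f(I_n)|\asymp\xi_n^\alpha$, and together with the distortion bound this produces a recursion of the schematic form $\xi_{n+1}\leq C_1^{q_{n+1}}\cdot\xi_n^\alpha$. Under the bounded-type hypothesis $q_{n+1}$ grows at most exponentially in $n$, while each step multiplies $-\log\xi_n$ by $\alpha$; a one-line induction then yields the \emph{doubly-exponential} decay
$$\xi_n\leq\exp(-c\alpha^n) \qquad \text{for some }c>0.$$
For the Hausdorff dimension I would use the natural cover
$$\overline{\mathcal{O}}\subset\bigcup_{i=0}^{q_n-1}\overline{f^i(I_n)}, \qquad \max_{0\leq i<q_n}|f^i(I_n)|\leq C\xi_n,$$
which for any $s>0$ gives $\mathcal{H}^s(\overline{\mathcal{O}})\leq q_n\cdot C\xi_n^s\leq C\rho^n\exp(-cs\alpha^n)\to 0$; hence $\dim_H\overline{\mathcal{O}}=0$.

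For the asymptotic formulas, specialize to $f(x)=x^2+c$ near $0$, where $x_{k+1}=x_1+x_k^2$ propagates to
$$x_{iq_n+m}=x_m+x_{iq_n}^2\cdot(f^{m-1})'(x_1)\cdot\bigl(1+O(\xi_n)\bigr).$$
Substituting $m=q_n$ and $m=q_{n-1}$ and matching against the inequality chain of Theorem~\ref{thm:theta_recurrence}(1) rewrites each $\delta_n^i$ as an explicit ratio in $\xi_n$, $\xi_{n+1}$ and $(f^{q_n-1})'(x_1)$. Iterating the squaring step $x_{(i+1)q_n}\approx x_{q_n}+x_{iq_n}^2\cdot(f^{q_n-1})'(x_1)$ gives $\delta_n^i\sim(\delta_{n+1}^{a_{n+2}})^{2^i}$, and matching scales $n$ and $n+1$ (using that $x_{q_{n+1}}=x_{a_{n+1}q_n+q_{n-1}}$ is reached by $a_{n+1}$ scale-$n$ squarings starting from $x_{q_{n-1}}$) produces the deep recursion $(\delta_{n+2}^{a_{n+3}})^{2^{a_{n+2}}}\sim(\delta_{n+1}^{a_{n+2}})^{2^{a_{n+1}}-1}\delta_n^{a_{n+1}}$. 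The multiplicative error $1+O(p^n)$ is inherited from the two small parameters $\xi_n$ (doubly-exponentially small by the previous step) and the Koebe distortion $1+O(\xi_n)$ of $f^{q_n-1}$.

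The single hardest step is the uniform real-bounds and Koebe control from the first paragraph: neither the doubly-exponential decay of $\xi_n$ nor the $1+O(p^n)$ error in the asymptotics can be extracted without it. The bounded-type hypothesis on $\theta$ is indispensable for that control, since for unbounded $a_n$ the first-return map to $I_n$ composes $a_{n+1}$ deep excursions through the critical neighborhood and the Koebe constants would degrade.
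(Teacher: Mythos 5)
Your proposal skips the single hardest and most substantial step of the paper's argument. You write down a recursion of the form $\xi_{n+1}\leq C_1^{q_{n+1}}\xi_n^{\alpha}$ and claim that "a one-line induction yields doubly-exponential decay," but that induction simply does not close: taking logarithms gives $-\log\xi_{n+1}\geq\alpha(-\log\xi_n)-q_{n+1}\log C_1$, which is perfectly compatible with $-\log\xi_n$ staying bounded (roughly at the fixed point $\sim q_{n+1}\log C_1/(\alpha-1)$), i.e., with $\xi_n$ decaying only exponentially and the ratios $\delta_n^i$ staying bounded away from $0$. The a priori bounds that real bounds plus Koebe actually give (Section~\ref{sec:a_priori}, Lemma~\ref{lmm:a_priori_bounds_deltas}) only show $\delta_n^i<C_i<1$ — bounded away from $1$, not tending to $0$. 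Lemma~\ref{lmm:asymptotics} is explicitly conditional: \emph{if} some $\delta_n^i$ ever becomes smaller than a threshold $\epsilon$, then the cascade of superexponential decay and the asymptotic formulas follow. The theorem then hinges on proving that this threshold is always crossed, i.e.\ that $f\in\mathcal{F}_0(\theta)$, and that is where the entire renormalization machinery of Section~\ref{sec:renorm} comes in: the surgery to land in the class $\mathcal{A}$, the circle-type renormalization operator $\mathcal{R}$, compactness of the renormalization sequence (this is where bounded type is used in an essential way beyond Koebe control), identification of a limit with a polynomial-like map of type $(2,1)$, quasisymmetric rigidity of that class, and an explicit example with arbitrarily small $\delta_1^1$. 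None of this appears in your proposal, so the decay you need is asserted rather than proved.

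Two smaller points. First, your claimed decay rate $\xi_n\leq\exp(-c\alpha^n)$ is too strong: the paper's actual bound in the bounded-type case is of the form $d_n^i<Cp^{n^2+n}$, i.e.\ $\exp(-cn^2)$, which is what you get from $\alpha_n\to0$ exponentially and $a_n$ bounded. Second, your derivation of the asymptotic formulas from the local quadratic expansion is morally correct and close to the paper's Lemma~\ref{lmm:derivatives}--Lemma~\ref{lmm:asymptotics} computation (which also iterates the squaring across scales via the chain-rule expansion of $(f^{q_{n+1}-1})'(x_1)$), but the error term $1+O(p^n)$ again requires $\alpha_n\to0$ as an input, so it cannot be obtained before the degeneracy is established.
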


In case of a quadratic polynomial one can allow even a bit more. For an angle $\theta=[a_1,a_2,...]$, let $\{N_k\}_{k=1}^{\infty}=\{N_k(\theta)\}_{k=1}^{\infty}$ be a strictly increasing sequence of integers such that each $a_{N_k}$ is strictly bigger than all $a_i$ with smaller indices $i<N_k$.  
\begin{thm}[Hausdorff dimension for unbounded type]
	\label{thm:hausdorff_dim_unbounded}
	Let $\theta=[a_1,a_2,...]\in(3/5,2/3)$  be irrational of unbounded type such that $N_{k+1}-N_k> 2^{(5+\tau)a_{N_{k+1}}}$ for all $k$ big enough and some $\tau>0$, and $f(x)=x^2+c$ be $\theta$-recurrent. The Hausdorff dimension of $\overline{\mathcal{O}}$ is equal to $0$.
\end{thm}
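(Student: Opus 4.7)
My plan is to adapt the bounded-type argument of Theorem~\ref{thm:hausdorff_dim_bounded} by partitioning the indices into blocks between consecutive records, applying the bounded-type asymptotics block by block, and using the gap condition $N_{k+1}-N_k > 2^{(5+\tau)a_{N_{k+1}}}$ to absorb the damage accumulated at each record step.

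The first step is a quantitative, segmental version of Theorem~\ref{thm:hausdorff_dim_bounded}: on any block of $L$ consecutive indices $n$ with $a_n \le A$, the asymptotic formulas hold with constants depending only on $A$, and $-\log|x_{q_n}|$ grows by at least $c L / 2^{O(A)}$ over the block, for some absolute $c>0$. The factor $2^{O(A)}$ in the denominator reflects the $2^{a_{n+1}}$ exponent appearing in the Theorem~\ref{thm:hausdorff_dim_bounded} asymptotics. This step uses the specific form of $f(x)=x^2+c$: the relation $f(x)-c = x^2$ gives exact composition formulas for iterates near each post-critical point, avoiding the ``correction series'' whose convergence required global bounded type in Theorem~\ref{thm:hausdorff_dim_bounded}.

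The second step quantifies the loss at a record index. Passing from $N_{k+1}-1$ to $N_{k+1}$, the factor $2^{a_{N_{k+1}}}$ in
\[
(\delta_{n+2}^{a_{n+3}})^{2^{a_{n+2}}} \sim (\delta_{n+1}^{a_{n+2}})^{2^{a_{n+1}}-1}\,\delta_n^{a_{n+1}}
\]
allows $-\log|x_{q_{N_{k+1}}}|$ to shrink by a multiplicative factor at most $2^{O(a_{N_{k+1}})}$ relative to $-\log|x_{q_{N_{k+1}-1}}|$. Combining Steps~1 and~2 with the hypothesis $N_{k+1}-N_k > 2^{(5+\tau)a_{N_{k+1}}}$, the segmental growth dominates the record-time shrinkage by a factor $2^{\tau a_{N_{k+1}}}$, so $-\log|x_{q_{N_k}}|$ grows at least like $2^{\tau a_{N_k}}$ along the record subsequence.

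Finally, cover $\overline{\mathcal{O}}$ at level $n$ by at most $q_n+1$ intervals of diameter $2|x_{q_{n-1}}|$; Hausdorff dimension $0$ follows from $\log q_n/(-\log|x_{q_{n-1}}|)\to 0$. Using $\log q_n \le \sum_{i\le n}\log(a_i+1)$ together with the gap condition (which bounds $N_k$ in terms of $a_{N_k}$), the numerator along records is polynomial in $2^{a_{N_k}}$, while the denominator grows at least like $2^{\tau a_{N_k}}$; for intermediate indices one interpolates via monotonicity of $|x_{q_n}|$. The main obstacle, I expect, will be Step~1: producing quantitative asymptotics whose error terms sum finitely across segments of unbounded size and scale. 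This is where the quadratic polynomial assumption is crucial; for general $C^2$ non-flat maps (as in Theorem~\ref{thm:hausdorff_dim_bounded}) the estimates go through the correction series of \cite{LM} whose convergence is only known under uniform bounds on the partial quotients.
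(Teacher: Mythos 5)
Your high-level plan matches the paper's: block the indices at the record times $N_k$, get quantitative exponential decrease of the $\alpha_n$'s within each block, and use the gap $N_{k+1}-N_k>2^{(5+\tau)a_{N_{k+1}}}$ to re-establish the smallness hypothesis at the next record. The final step (Hausdorff $\varepsilon$-measure goes to $0$ because the lengths decrease faster than the interval count grows) is also the paper's conclusion. However, there are two genuine gaps.

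\textbf{Initialization is missing.} Everything you propose in Step~1 produces decay \emph{conditionally}: the asymptotics of Lemma~\ref{lmm:asymptotics} (and its quantitative variants Lemmas~\ref{lmm:derivatives2}--\ref{lmm:big_lambdas_decrease}) only apply once some $\delta_n^i$, or equivalently some $\alpha_{n-2}$, is already below a threshold that itself depends on $a_{n+1}$. For bounded type this smallness is forced by the infinite-renormalization limit argument of Section~\ref{sec:renorm}; that compactness argument breaks for unbounded type. The paper's workaround is crucial and you do not mention it: do a one-time renormalization of $x^2+c$ to a polynomial-like map of type $(2,1)$, invoke the quasi-symmetric conjugacy of all such maps (\cite[Corollary~7.4]{LM}), and change the representative in the conjugacy class so that the first finitely many $\delta_n^i$'s are as small as you please (as in \cite[Example~7.1]{LM}). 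Without this you never enter the regime where the decrease propagates. This is the step that really uses $f(x)=x^2+c$, not ``exact composition formulas near the critical point''; the mechanism that kills the error terms in Lemma~\ref{lmm:a_priori_2} is simply that the Schwarzian derivative is non-positive, so the $O(\cdot)$ corrections vanish identically.

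\textbf{Your final quantitative estimate would not close the argument as stated.} You assert that $-\log|x_{q_{N_k}}|$ grows at least like $2^{\tau a_{N_k}}$, while the cover count $\log q_{N_k}$ is ``polynomial in $2^{a_{N_k}}$'', i.e., of order $2^{(5+\tau)a_{N_k}}$ because the gap condition forces $N_k\gtrsim 2^{(5+\tau)a_{N_k}}$. With those two bounds the ratio $\log q_n/(-\log|x_{q_{n-1}}|)$ is roughly $2^{5a_{N_k}}$, which does not tend to $0$. The argument survives only because the true decay is much faster: over a block of length $L=N_{i+1}-N_i$ the bound $\alpha_{N_i+k}<\gamma_i^{(k-1)}\cdot(\text{const})$ with $\gamma_i<1$ yields $-\log D_{N_{i+1}}\gtrsim N_{i+1}^{3/2}$ (after summing the geometric exponents, the gain is quadratic in $L$, and $L\gtrsim N_{i+1}^{1-o(1)}$), which beats $\log q_{N_{i+1}}\lesssim N_{i+1}\log a_{N_{i+1}}$. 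So you need the telescoped product estimate from Lemma~\ref{lmm:big_lambdas_decrease}, not merely a per-record multiplicative factor. Also beware: the intervals of $M^n$ are of comparable length only after the bounded-distortion input; they are not uniformly bounded by $2|x_{q_{n-1}}|$, so the cover must be justified as in Lemma~\ref{lmm:dim=0}.
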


\begin{remark}
	It would be interesting if the estimate on the growth of $N_k$ in Theorem~\ref{thm:hausdorff_dim_unbounded} could be replaced by the optimal one.
\end{remark}

The proof of Theorem~\ref{thm:hausdorff_dim_bounded} generalizes the analogous proof in \cite{LM} which on its own uses ideas of Sullivan \cite{S}. First, in Section~\ref{sec:a_priori} we provide \emph{a priori bounds} for a fixed $\theta$-recurrent map $f$. Next, in Section~\ref{sec:Hausdorff} we show that if $\delta_n^i$ appear arbitrarily small, at some point they start to decrease with the exponential speed. From this follow very precise bounds on certain iterates of $f$, as well as asymptotics of Theorem~\ref{thm:hausdorff_dim_bounded}. By an additional computation we get bounds on the Hausdorff $\varepsilon$-measure for each $\varepsilon>0$ and this proves the theorem under condition that the geometry of $f$ degenerates. Next step is to prove that this is always the case.

In Section~\ref{sec:renorm} we introduce a renormalization procedure for a special class of functions to which $f$ belongs after a surgery not changing $\delta_n^i$'s. The map $f$ (after surgery) is infinitely renormalizable. Unlike for the Fibonacci maps in \cite{LM}, where the renormalization is basically defined using the fact that the infinite Fibonacci word is a fixed point of a substitution, this does not work for general $\theta$. Hence, our definition if modeled on the renormalization of circle rotations (see e.g.\ \cite{Arnoux}).

If the geometry of $f$ does not degenerate, then the sequence of its renormalizations has a limit point $g$ which is $\theta'$-recurrent, with $\theta'$ of bounded type. This limit point after an additional renormalization can be made a polynomial-like map of type (2,1) (see \cite{LM}). All such maps (with the same $\theta'$) are quasimetrically conjugate and one can construct explicitly an example of such a map with degenerating geometry which implies that $\delta_n^i$'s of $f$ are arbitrarily small. The proofs we are referring to in this paragraph are the same or the same after elementary correction as for the Fibonacci maps, so for them we only provide a reference.

For $f(x)=x^2+c$ we do not need to consider a limit of renormalization. In fact, one can do an explicit one-time renormalization replacing $f$ by a polynomial-like map of type (2,1). By changing within its quasi-symmetrical conjugacy class we can assume the first finitely many $\delta_n^i$'s arbitrarily small. So, proof of Theorem~\ref{thm:hausdorff_dim_unbounded} is based on more accurate computations of when the exponential decrease of $\delta_n^i$ begins. Roughly speaking: run this exponential decrease of $\delta_n^i$ by making $a_n$ bigger at the cost of making the multiplicative factor bigger as well.

\subsection*{Acknowledgements}
We are deeply grateful to Vladlen Timorin for inspiring and fruitful discussions of the project. Both authors were supported by the ANR grant ANR-18-CE40-0035 REPKA.

\section{Dynamics of irrational rotations}
\label{sec:theta_recurrent}

To begin with, we describe what we understand by the dynamics of irrational rotation for a unimodal map.

Let $I=[-1,1]$ and $f:I\to I$ be a unimodal map with the minimum point at $0$ and $f(-1)=f(1)=1$. Further, we denote by $\mathcal{O}=\{x_n\}_{n=0}^\infty$ the critical orbit of $f$, that is, $x_n=f^n(0)$.

Next, for a nonzero $x\in I$, we use the notation $x'$ for the other point satisfying $f(x')=f(x)$, while for $x=0$, $x':=0$. For a pair $y,z\in I$ we write $\abs{y}<\abs{z}$ if $f(y)<f(z)$. Denote also by $I_x$ the closed non-oriented interval $[x,x']$ (possibly consisting of one point $0$). For $y,z\notin I_x$, we say that $y$ is closer to $I_x$ than $z$ if $\abs{y}<\abs{z}$.

In \cite{LM} the Fibonacci real quadratic polynomial was defined as a polynomial for which the closest recurrence of its critical orbit happens $\{x_n\}_{n=0}^\infty$ for those $n$ that are Fibonacci numbers 1,2,3,5,8,... Since the golden ratio $\varphi=\frac{\sqrt{5}+1}{2}$ satisfies $\varphi=1+[1,1,1,...]$, for an irrational angle $\theta\in(0,1)$ it seems natural to represent it via its continued fraction $$\theta=[a_1,a_2,a_3,...]=\cfrac{1}{a_1+\cfrac{1}{a_2+\cfrac{1}{a_3+\cdots}}}, a_n\in\mathbb{N}$$ and to define a $\theta$-recurrent unimodal map $f$ as the one having closest recurrence of the critical point at times $q_n$ where $q_n$ is the denominator of $\theta_n:=[a_1,a_2,...,a_n]$ (recall that $q_n=a_n q_{n-1}+q_{n-2}$). Unfortunately, in such setting there are, generally speaking, many different $\theta$-recurrent maps not necessarily having critical orbit with a self-similar structure as in case of the Fibonacci map.

Therefore we need to use a somewhat more specific definition of closest recurrence. We start with a ``finite'' version.

\begin{defn}[$\theta$-recurrence for intervals]
	\label{defn:recurrence_intervals}
	Let $f:I\to I$ be a unimodal map, $x\in I$, $\theta=[a_1,a_2,...,a_N]$ be a finite continued fraction, and $q_k$ be the denominator of $[a_1,a_2,...,a_k]$. We say that $x$ has \emph{$\theta$-recurrence} if the following conditions hold:
	\begin{enumerate}
		\item for $k=1,2,3,...,q_N$, points $x_k=f^k(x)$ are not in $I_x$,
		\item times of closest recurrence of $x$ to $I_x$ are exactly $q_1,q_2,...,q_N$,
		\item for $m=0,1,...,a_N-1$, a point $x_{m q_{N-1}}$ has $[a_1,a_2,...,a_{N-1}]$-recurrence.  
	\end{enumerate}
\end{defn}

Definition~\ref{defn:recurrence_intervals} is fashioned to make use of the formula $q_n=a_n q_{n-1}+q_{n-2}$ to prescribe which parts of the orbit $\{x_k\}$ have to be ``similar'': the parts from $0$ to $q_{N-1}$, from $q_{N-1}$ to $2q_{N-1}$,\ ...\ , from $(a_N-1)q_{N-1}$ to $a_N q_{N-1}$. So the orbit of $x$ can be split into $a_N+1$ consecutive blocks, first $a_N$ of which represent a $[a_1,a_2,...,a_{N-1}]$-recurrent orbit.

\begin{defn}[$\theta$-recurrence]
	\label{defn:recurrence}
	Let $f:I\to I$ be a unimodal map and $\theta=[a_1,a_2,a_3,...]$ be an infinite continued fraction. We say that the critical value $x_0$ has \emph{$\theta$-recurrence} (or $f$ is $\theta$-recurrent) if $x_0$ is $\theta_n$-recurrent for every $\theta_n=[a_1,a_2,...,a_n]$
\end{defn}

Clearly, for $\theta=[1,1,1,...]$ Definition~\ref{defn:recurrence} coincides with the definition via closest recurrence at times that are Fibonacci numbers.

Note that for any $\theta$ and a $\theta$-recurrent map it is always true that $x_1<0,x_2>0$ and $x_3\in (x_1,x_2)$ --- otherwise we get a non-recurrent dynamics of the critical point. 

Now, we want to exclude those irrational angles $\theta=[a_1,a_2,a_3,...]$ for which there is no $\theta$-recurrent map. Trivially, the first time of closest recurrence has to be equal to $1$. If $x_2>x_1'$, then $x_2<x_3<x_4<...$, so the second time of closest recurrence is $2$. This is possible only in cases $a_1=2$ or $a_1=1,a_2=1$. However, these cases correspond to the conjugate dynamics (rotation by $\theta$ and $-\theta$), so we may agree to deal with only one of them. From now on, we assume $a_1=1,a_2=1$. Also, $a_3=1$: indeed, we have $q_1=1,q_2=2,q_3=2a_3+1\geq 5$, and $x_1'>x_2$. If $a_3>1$, then $3$ is not a time of closest recurrence, so $x_3\in [x_1,x_2']$. Further, $x_4$ has to be in $f([x_1,x_2'])=[x_3,x_2]$, but since $4$ is also not a time of closest recurrence, we have $x_4\in[x_3,x_2']$. So, the interval $[x_3,x_2']$ is mapped by $f$ into itself. Hence, such map cannot be $\theta$-recurrent.

\begin{defn}[Admissible angles]
	\label{defn:admissible_angles}
	An irrational angle $\theta=[a_1,a_2,a_3,...]$ is called \emph{admissible} if $a_1=a_2=a_3=1$.
\end{defn}

The goal of this section is to show that every admissible angle $\theta$ can be realized by a $\theta$-recurrent unimodal map and each $\theta$ uniquely defines the order of $\{x_k\}$ in $I$. To describe this order we need to consider a \emph{number system associated to $\theta$}, which is completely analogous to the Fibonacci number system.

Given an irrational $\theta=[a_1,a_2,...]$ with $a_1=1$, every integer $k$ can be written in a unique way as a sum $k=\sum_{n=1}^{\infty}\gamma_n q_n$ where $0\leq\gamma_n\leq a_{n+1}$, only finitely many $\gamma_n$'s are nonzero and if the digit $\gamma_n$ is equal to $a_{n+1}$, then $\gamma_{n-1}=0$. One represents such $k$ as $[\gamma_1\gamma_2\gamma_3\cdots]$ starting from the smallest term. The representation $[00...00\gamma_n 00\cdots]$ with $\gamma_n=1$ corresponds to $k=q_n$. Note that one can also consider formal infinite sums $\varkappa=\sum_{n=1}^{\infty}\gamma_n q_n$ where $0\leq\gamma_n\leq a_{n+1}$ and if the digit $\gamma_n$ is equal to $a_{n+1}$, then $\gamma_{n-1}=0$. Such infinite sums are limits of finite words $[\gamma_1\gamma_2...\gamma_m00\cdots]$ in the product topology.

\begin{thm}[Signs of $\{x_k\}$]
	\label{thm:signs}
	For the critical orbit $\{x_k\}_{k>0}$ of a $\theta$-recurrent map $f$ the following relations are true:
	\begin{enumerate}
		\item $x_{q_n}<0$ if $n\equiv 0,1 \mod 4$ and $x_{q_n}>0$ otherwise;
		\item if $k=\gamma_n q_n$ with $\gamma_n>1$, then $x_{q_n}$ and $x_{\gamma_n q_n}$ have opposite signs;
		\item if $k=\sum_{n=m}^{\infty}\gamma_n q_n$ with $\gamma_m>0$, then $x_k$ and $x_{\gamma_m q_m}$ have the same sign.
	\end{enumerate}  
\end{thm}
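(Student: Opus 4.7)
The plan is to prove the three parts simultaneously by strong induction on the continued-fraction depth $n$, using the nested self-similar structure of the critical orbit forced by Definition~\ref{defn:recurrence}. The induction hypothesis at depth $n$ asserts the three parts hold for all indices whose Ostrowski-like representation uses only digits at positions $\le n$, together with the ordering $\absv{x_{q_n}}<\absv{x_{2q_n}}<\dots<\absv{x_{a_{n+1}q_n}}<\absv{x_{q_{n-1}}}$ from Theorem~\ref{thm:theta_recurrence}(1), which I would establish in parallel.

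For the base case $n\le 3$, admissibility gives $q_1=1$, $q_2=2$, $q_3=3$, and $x_1<0$, $x_2>0$ are already pinned by the excerpt. The remaining sign $x_3>0$ follows from the closest-recurrence condition at $q_3=3$, which places $x_3$ in the interior of $I_{x_2}=[x_2',x_2]$; ruling out $x_3\in(x_2',0)$ proceeds by showing that the subsequent iterates $x_4,x_5$ would trap the orbit in a proper forward-invariant subinterval---the same forbidden configuration the excerpt uses to rule out $a_3\ge 2$---contradicting $\theta$-recurrence. Parts (2) and (3) are vacuous at this depth.

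For the inductive step, I would write $x_{q_{n+1}}=f^{q_{n-1}}(x_{a_{n+1}q_n})$ using $q_{n+1}=a_{n+1}q_n+q_{n-1}$. By the inductive hypothesis for part~(2), $x_{a_{n+1}q_n}$ lies opposite $x_{q_n}$ when $a_{n+1}>1$ and equals $x_{q_n}$ when $a_{n+1}=1$. Since $\absv{x_{a_{n+1}q_n}}$ is strictly smaller than $\absv{x_j}$ for every $1\le j<q_{n-1}$, the orbit of length $q_{n-1}$ issuing from $x_{a_{n+1}q_n}$ is a combinatorial replica of $x_0,\dots,x_{q_{n-1}}$, so the sign of $x_{q_{n+1}}$ is determined by the sign of $x_{q_{n-1}}$, the starting side of $x_{a_{n+1}q_n}$, and a parity count of unimodal folds along the replica. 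Carrying this through yields the $\mod 4$ congruence of part~(1): the period four comes from combining one sign-flip produced by the fold of $f$ at $0$ with one produced by the alternation between consecutive closest-return sides. Part~(2) at depth $n+1$ is then immediate from the strict ordering of Theorem~\ref{thm:theta_recurrence}(1) together with the fold, which places $x_{q_{n+1}}$ alone on its side of $0$ and forces the $x_{mq_{n+1}}$ with $m\ge 2$ to the other side.

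For part~(3), given $k=\sum_{n\ge m}\gamma_nq_n$ with $\gamma_m>0$, I would write $k=\gamma_mq_m+k'$ and use $x_k=f^{k'}(x_{\gamma_mq_m})$; the $\theta_{m+1}$-recurrence of $x_{\gamma_mq_m}$ guaranteed by clause~(3) of Definition~\ref{defn:recurrence_intervals} together with induction on $m$ yields that the length-$k'$ tail incurs an even number of sign flips, hence $\sgn(x_k)=\sgn(x_{\gamma_mq_m})$. I expect the main obstacle to be the sign-flip accounting in the inductive step for part~(1): pinning down the exact parity contribution of each nested layer depends on how $f^{q_{n-1}}$ crosses preimages of $0$ along the replica orbit. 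I would organize this bookkeeping through a two-letter substitution on itineraries induced by $\theta$-recurrence, and recognize the $\mod 4$ pattern as the order of that substitution acting on the starting letter $\sgn(x_{q_1})=-1$.
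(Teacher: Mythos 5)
Your high-level plan (strong induction on the continued-fraction depth, combined with the self-similar block structure of the orbit and a parity count of unimodal folds) is the same skeleton the paper uses, and the two-letter-substitution bookkeeping is morally the paper's Lemma~\ref{lmm:number_of_negative_terms}. But there is a genuine gap in the inductive step: the ingredients you list do not actually determine the new sign. Writing $x_{q_{n+1}}=f^{q_{n-1}}(x_{a_{n+1}q_n})$ and using the parity of $\Neg(q_{n-1})$ together with the side of $x_{a_{n+1}q_n}$ only tells you the order of the two endpoints of the image interval $[x_{q_{n-1}},x_{q_{n+1}}]$; since $\absv{x_{q_{n+1}}}<\absv{x_{q_{n-1}}}$, the endpoint $x_{q_{n+1}}$ is free to lie on either side of $0$, so its sign is \emph{not} forced by that datum. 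In fact at this stage the paper does not determine $\sgn(x_{q_{n+1}})$ at all; its Lemma~\ref{lmm:signs} deliberately excludes the indices $q_{n},2q_n,\dots,a_{n+1}q_n$ from what is settled at depth $n+1$. What does get settled is $\sgn(x_{q_{n-1}})$ (and of the other $x_{iq_n+q_{n-1}}$), and the crucial input you are missing is the closest-recurrence condition at $q_{n+1}$: from the fact that $x_{q_{n+1}}$ must land strictly inside all the intervals $[x_{iq_n+q_{n-1}},x_{(a_{n+1}-1)q_n+(k+1)q_{n-1}}]$, the paper first deduces that $x_{q_{n-1}}$ and $x_{2q_{n-1}}$ (more generally $x_{iq_n+q_{n-1}}$ and $x_{iq_n+(k+1)q_{n-1}}$) have \emph{opposite} signs, and only then does the orientation argument pin down which sign is which. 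Without that step, the parity count is consistent with both possible signs.

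The same gap invalidates the claim that part~(2) ``is then immediate from the strict ordering \dots\ together with the fold.'' The ordering $\absv{x_{q_n}}<\absv{x_{2q_n}}<\cdots<\absv{x_{a_{n+1}q_n}}$ combined with the fold at $0$ does not place $x_{q_n}$ and $x_{2q_n}$ on opposite sides of $0$: you can have $|x_{q_n}|<|x_{2q_n}|$ with both negative. The opposite-sign statement is exactly the content of the closest-recurrence argument sketched above, not a corollary of it. Two further smaller issues: (i) the replica structure you invoke for the block starting at $x_{a_{n+1}q_n}$ is not given by Definition~\ref{defn:recurrence_intervals}(3), which covers only $m<a_{n+1}$; the fact that the signs of $x_{a_{n+1}q_n+m}$ match those of $x_m$ for $0<m<q_{n-1}$ has to be established as part of the induction (the paper does this by squeezing $x_{a_{n+1}q_n+m}$ between $x_{iq_n+m}$ and $x_{(a_{n+1}-1)q_n+kq_{n-1}+m}$); (ii) your base case tries to pin $\sgn(x_3)$ from $[1,1,1]$-recurrence plus a trapping argument, but $\sgn(x_3)=\sgn(x_{q_3})$ is not forced at depth $3$ (or even depth $4$) --- in the paper it is settled only at depth $5$, via the same closest-recurrence mechanism --- and the trapping argument you gesture at does not obviously close, since after $x_3<0$ the iterate $x_5$ can land on either side of $0$ without contradiction from the closest-recurrence data alone.
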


An example of the critical orbit of $\theta$-recurrent map for $\theta=[1,1,1,3,2,...]$ is provided on Picture~\ref{pic:example}.

We will also need a rather simple
\begin{lmm}
	\label{lmm:number_of_negative_terms}
	For a finite or infinite sequence $\{x_k\}_{k>0}$, let $\Neg(m)$ denote the number of $0<k<m$ such that $x_{k}<0$. If $\{x_k\}_{k>0}$ satisfies $(1),(2),(3)$ of Theorem~\ref{thm:signs} for $0<k<q_n$, then $\Neg(q_n)$ is even if $n\equiv 0,1 \mod 4$ and odd otherwise.
\end{lmm}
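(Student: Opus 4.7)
My plan is to prove the lemma by induction on $n$, using the $\theta$-number system just introduced above to decompose the count. The small base cases ($n \le 3$) I check directly: $\Neg(q_0)$ and $\Neg(q_1)$ are empty sums, and for admissible $\theta$ (with $a_1 = a_2 = a_3 = 1$) one computes $\Neg(q_2) = \Neg(q_3) = 1$, matching the claim.

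For the inductive step, I will partition $\{1, \dots, q_n - 1\}$ into $a_n + 1$ blocks indexed by the leading digit $j := \gamma_{n-1}(k)$. For $0 \le j \le a_n - 1$ the block $\{j q_{n-1} + r : 0 \le r < q_{n-1}\}$ has full width $q_{n-1}$, while the edge block $j = a_n$ is truncated to width $q_{n-2}$ by the side condition $\gamma_{n-1} = a_n \Rightarrow \gamma_{n-2} = 0$. Rules (2) and (3) then control signs: if $r > 0$, the smallest nonzero digit of $k$ sits at some index $\le n - 2$ and agrees with that of $r$, so $\sgn(x_k) = \sgn(x_r)$; if $r = 0$, then $\sgn(x_{j q_{n-1}})$ equals $\sgn(x_{q_{n-1}})$ when $j = 1$ and equals $-\sgn(x_{q_{n-1}})$ when $j \ge 2$. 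Summing contributions across blocks yields the uniform recurrence
\[
\Neg(q_n) = a_n \Neg(q_{n-1}) + \Neg(q_{n-2}) + (a_n - 1) + (2 - a_n)\,\epsilon_{n-1},
\]
where $\epsilon_{n-1} := [x_{q_{n-1}} < 0]$ depends only on $n \bmod 4$ by rule (1).

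Reducing mod $2$ gives $\Neg(q_n) \equiv a_n(\Neg(q_{n-1}) + \epsilon_{n-1} + 1) + \Neg(q_{n-2}) + 1 \pmod 2$. The induction hypothesis on the parity of $\Neg(q_{n-1})$, combined with the explicit value of $\epsilon_{n-1}$, forces $\Neg(q_{n-1}) + \epsilon_{n-1} + 1 \equiv 0 \pmod 2$ in each residue class of $n-1 \bmod 4$, so the $a_n$-dependence drops out and $\Neg(q_n) \equiv \Neg(q_{n-2}) + 1 \pmod 2$, which matches the claimed dependence on $n \bmod 4$. The main subtlety I anticipate is the unified treatment of the truncated edge block $j = a_n$ (responsible for the $\Neg(q_{n-2})$ summand) and the degenerate case $a_n = 1$, where there are no interior blocks $2 \le j \le a_n - 1$ and the roles of $j = 1$ and $j = a_n$ coincide; a direct substitution of $a_n = 1$ into the general formula confirms that both cases fit into a single recurrence.
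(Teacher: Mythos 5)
Your proof is correct and takes essentially the same route as the paper: both establish the recurrence $\Neg(q_n)=a_n\Neg(q_{n-1})+\Neg(q_{n-2})+\delta_n$ (the paper's $\delta_n$ is your $(a_n-1)+(2-a_n)\epsilon_{n-1}$, which one checks equals $1$ when $x_{q_{n-1}}<0$ and $a_n-1$ otherwise) and close the induction by analysing residues of $n$ mod $4$. The only differences are expository: you derive the recurrence carefully from the Ostrowski block decomposition and the sign rules (2),(3), whereas the paper states it tersely; and your observation that $\Neg(q_{n-1})+\epsilon_{n-1}+1\equiv 0\pmod 2$ in every residue class (so the $a_n$-dependence drops out and $\Neg(q_n)\equiv\Neg(q_{n-2})+1$) is a cleaner way of organising the same four-case check the paper tabulates explicitly.
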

\begin{proof}
	Lemma is true for $n=1,2$. Then for $n>2$ holds
	$$\Neg(q_n)=\Neg(a_nq_{n-1}+q_{n-2})=a_n\Neg(q_{n-1})+\delta_n+\Neg(q_{n-2}),$$
	where
	\begin{equation}
	\delta_n =
		\begin{cases}
			1 & \text{if $x_{q_{n-1}}<0$}\\
			a_n-1 & \text{if $x_{q_{n-1}}>0$}
		\end{cases}       
	\end{equation}
	Hence, by induction on $n$ we have
	\begin{equation}
		\Neg(q_n) \equiv
		\begin{cases}
			a_n\cdot 1+a_n-1+1\equiv 0\mod 2 & \text{if $n\equiv 0\mod 4$}\\
			a_n\cdot 0+1+1\equiv 0\mod 2 & \text{if $n\equiv 1\mod 4$}\\
			a_n\cdot 0+1+0\equiv 1\mod 2 & \text{if $n\equiv 2\mod 4$}\\
			a_n\cdot 1+a_n-1+0\equiv 1\mod 2 & \text{if $n\equiv 3\mod 4$}
		\end{cases}   
	\end{equation}

	To prove part $(b)$ note that $q_n-1=a_nq_{n-1}+a_{n-2}q_{n-3}+a_{n-4}q_{n-5}+...$ and its smallest term is either $q_1=1$ or $q_2=2$ depending on whether $n$ is even or odd, respectively.
\end{proof}

Since Theorem~\ref{thm:signs} determines the signs of the whole orbit $\{x_k\}$, it automatically determines the order $\{x_k\}$ in $I$ (unless the sequence of signs is eventually periodic, but we will see later that this does not happen). The proof uses a similar inductive step as in \cite{LM}. We formulate it in form of Lemma~\ref{lmm:signs}. Theorem~\ref{thm:signs} follows trivially.

\begin{lmm}[Sign of $x_k$ for finite $\theta$]
	\label{lmm:signs}
	Let $\theta=[a_1,a_2,a_3,...,a_n], n\geq 4$ be a finite continued fraction and $x_0\in I$ (not necessarily equal to zero) has $\theta$-recurrence. Statements $(1),(2),(3)$ in Theorem~\ref{thm:signs} are true for $x_k$ with $k\in \{1,2,\dots,q_n-1\}\setminus\{q_{n-1},2q_{n-1},...,a_nq_{n-1}\}$.
\end{lmm}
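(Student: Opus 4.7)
The proof is by induction on $n \geq 4$. The base case $n = 4$ is a direct verification: using the initial conditions $x_1 < 0$, $x_2 > 0$, $x_3 \in (x_1, x_2)$ established just before Definition~\ref{defn:admissible_angles}, together with $\theta$-recurrence for $\theta = [1,1,1,a_4]$, one unfolds Definition~\ref{defn:recurrence_intervals} to pin down the signs of all $x_k$ in the required range and checks case by case that they match (1)--(3).

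For the inductive step, fix $\theta = [a_1, \ldots, a_n]$ and assume the lemma at level $n - 1$. The key input is Definition~\ref{defn:recurrence_intervals}(3): each $x_{m q_{n-1}}$ for $m \in \{0, 1, \ldots, a_n - 1\}$ is $[a_1, \ldots, a_{n-1}]$-recurrent, so the inductive hypothesis applies to every shifted sub-orbit $\{x_{m q_{n-1} + j}\}_{j=0}^{q_{n-1} - 1}$. Because the inductive (1), (2) are \emph{absolute} sign statements, the sign of $x_{m q_{n-1} + j}$ (for $j$ in the allowed range) depends only on $j$, not on $m$. Combined with inductive (3),
\[
\sgn x_{m q_{n-1} + j} \;=\; \sgn x_{m q_{n-1} + \gamma_{m^*} q_{m^*}},
\]
where $\gamma_{m^*}$ is the smallest nonzero digit in the number-system expansion of $j$, this immediately yields the target full-orbit identity $\sgn x_k = \sgn x_{\gamma_{m^*} q_{m^*}}$ for $k = m q_{n-1} + j$. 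The ``tail'' $a_n q_{n-1} \leq k < q_n$ is handled by writing $k = a_n q_{n-1} + j$ with $j < q_{n-2}$ and reducing to values already covered.

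The main obstacle is a subcase that is excluded by the inductive hypothesis at level $n - 1$: indices of the form $k = \gamma_{n-2} q_{n-2}$ with $\gamma_{n-2} \in \{2, \ldots, a_{n-1}\}$ in the first block (and their translates $m q_{n-1} + \gamma_{n-2} q_{n-2}$). These do not arise in the Fibonacci setting of~\cite{LM} where $a_l \equiv 1$, so they represent the essentially new content of the lemma for general admissible $\theta$. For these points, statement (2) must be proved afresh using fresh level-$n$ information: the condition that $q_{n-1}$ is the closest recurrence time immediately following $q_{n-2}$, together with the order of iterates already known from the other sub-orbits, forces the signs of $x_{q_{n-2}}, x_{2 q_{n-2}}, \ldots, x_{a_{n-1} q_{n-2}}$ to alternate in exactly the manner prescribed by~(2). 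Once statement~(2) is established for these points in the first block, translation across blocks (via the shifted orbits' (3) applied at the same indices) propagates it to all the remaining $k$ of this form.
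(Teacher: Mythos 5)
Your outline captures the correct skeleton — induction on $n$, using Definition~\ref{defn:recurrence_intervals}(3) to apply the inductive hypothesis to each shifted sub-orbit $\{x_{mq_{n-1}+j}\}_j$, and the observation that the undetermined indices are exactly the translates of the level-$(n-1)$ exceptional set plus the tail. But there is a real gap in the last step: you argue that the closest-recurrence condition ``forces the signs to alternate in exactly the manner prescribed by~(2),'' and you treat the actual value of each sign as following automatically. It doesn't. Knowing that $x_{q_{n-2}}, x_{2q_{n-2}},\dots,x_{a_{n-1}q_{n-2}}$ alternate still leaves two possible assignments (starting with $+$ or starting with $-$), and nothing in the recurrence or block structure alone chooses between them. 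The paper resolves this by a parity count: it shows that $f^{q_{n-1}-1}$ restricted to the relevant interval is orientation-preserving or reversing according to whether $\Neg(q_{n-1})$ is even or odd, and then Lemma~\ref{lmm:number_of_negative_terms} pins down that parity, hence the sign. That lemma and the orientation argument are the indispensable engine of the proof; without them you have the shape of the answer but no way to select the correct branch.

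Two smaller issues. First, you bracket the case $\gamma_{n-2}=1$ (the sign of $x_{q_{n-2}}$, i.e.\ statement~(1)) as if it were already supplied, but it is also in the level-$(n-1)$ exceptional set, so it must be established in the same step and by the same parity mechanism. Second, the paper splits the induction step into $a_n>1$ and $a_n=1$, and the two branches are genuinely different: when $a_n=1$ the chain of comparison points must be iterated $a_{n-1}-1$ times, a proof-by-contradiction is needed to rule out two ``wrong'' positions for $y_{a_{n-1}}$, and the tail of length $q_{n-1}$ has to be traced through $q_{n-3}$-step increments. Your proposal doesn't acknowledge this bifurcation, and the $a_n=1$ branch does not reduce to a routine translate of the $a_n>1$ argument. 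So while the framing is right, the proposal as written would not compile into a proof without the $\Neg$-parity machinery and the $a_n=1$ casework.
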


\begin{proof}
	
	We use the induction by $n$. Let us first do the induction step for $n\geq 4$, then describe the basis of induction.
	
	Thus, let the statement of the lemma be true for all continued fractions $\theta$ of length less or equal than $n$, and prove it for $n+1$. Since $x_0$ has $[a_1,a_2,a_3,...,a_{n+1}]$-recurrence, by Definition~\ref{defn:recurrence_intervals} each of the points $x_{iq_n}, i\in \{0,1,...,a_{n+1}-1\}$ has $[a_1,a_2,a_3,...,a_n]$-recurrence. In the block of length $q_n$ starting from $iq_n$, the induction hypothesis determines signs of all $x_k$ except for $k=iq_n,iq_n+q_{n-1},iq_n+2q_{n-1},...,iq_n+a_nq_{n-1}$, and these signs satisfy $(1),(2),(3)$ (for step $n+1$) because each of them in the number system associated with $[a_1,a_2,a_3,...,a_{n+1}]$ (defined for integers less or equal than $q_{n+1}$) has dominating (smallest) term less than $q_{n-1}$.
	
	\begin{figure}[h]
		\includegraphics[width=\textwidth]{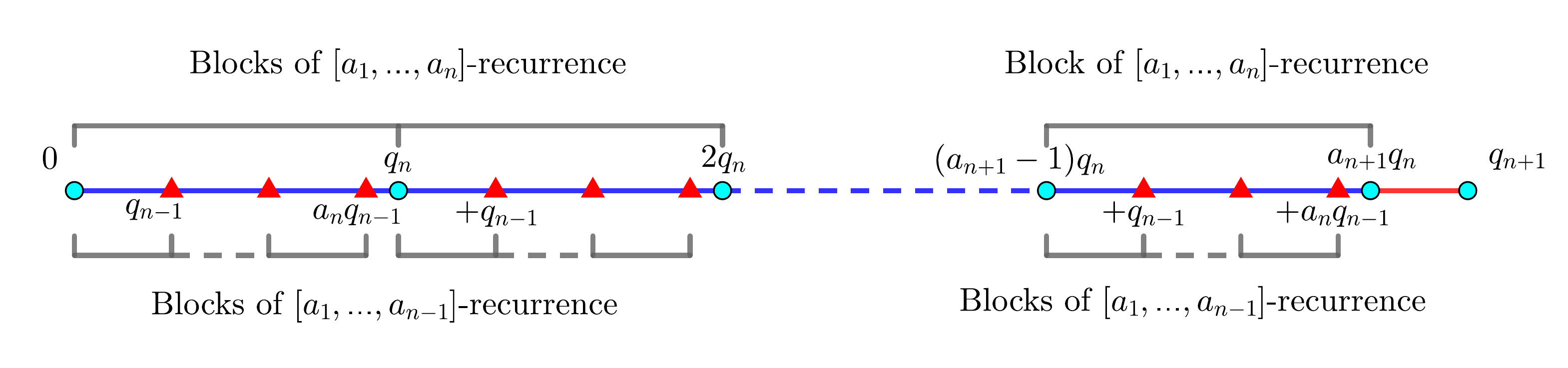}
		\caption{Induction step}
		\label{pic:induction}
	\end{figure} 
	
	Hence, one has to determine signs of points $x_{iq_n+kq_{n-1}}$ and $x_{a_{n+1}q_n+l}$ where $0\leq i<a_{n+1},0<k\leq a_n,0<l<q_{n-1}$. It is much easier when visualized: on Picture~\ref{pic:induction} blue lines correspond to the indices for which $n$-hypothesis determines their signs. The red line and red triangles correspond to indices for which we need to determine their sign in order to do the induction step. Finally, in light blue are marked those points for which the sign is not know and does not need to be determined on this induction step.
	
	\textbf{Induction step for $\mathbf{a_n>1}$.} We have the following inequalities
	$$\abs{x_{iq_n}}<\abs{x_{a_{n+1}q_n}}<\abs{x_{(a_{n+1}-1)q_n+kq_{n-1}}}$$
	for every $0\leq i<a_{n+1},0<k<a_n$. These inequalities are immediate consequences of the definition of $\theta$-recurrence. Terms in the first one are the starting or the ending point of blocks corresponding to $[a_1,...,a_n]$-recurrence --- and the starting point in each such block separating the two indices is closer to $0$ the ending one. In the second inequality $x_{a_{n+1}q_n}$ is again considered as the ending point of a block of $[a_1,...,a_n]$-recurrence, whence every intermediate point in this block is further from $0$, in particular, this is true for points $x_{(a_{n+1}-1)q_n+kq_{n-1}}$.
	
	By the induction hypothesis for $1\leq m< q_{n-1}$ the $f^m$-images of all points involved in the inequalities, except $x_{a_{n+1}q_n}$, have the same signs, that is, they belong to the same domain of monotonicity of $f$. We have
	$$x_{a_{n+1}q_n+m}\in[x_{iq_n+m},x_{(a_{n+1}-1)q_n+kq_{n-1}+m}],$$
	that is, $x_{a_{n+1}q_n+m}$ has the same sign as $x_m$ and satisfies the hypothesis for step $n+1$.
	
	Next, taking $m=q_{n-1}$, we obtain
	$$x_{q_{n+1}}=x_{a_{n+1}q_n+q_{n-1}}\in[x_{iq_n+q_{n-1}},x_{(a_{n+1}-1)q_n+(k+1)q_{n-1}}].$$
	But since $q_{n+1}$ is a time of closest recurrence, the endpoints of each interval must have opposite signs. Thus, for $0\leq i<a_{n+1},0<k<a_n$, points $x_{iq_n+q_{n-1}}$ have the same sign while points $x_{(a_{n+1}-1)q_n+(k+1)q_{n-1}}$ have the same opposite sign.
	
	If $a_{n+1}>1$, consider a pair of points $x_{iq_n},x_{iq_n+kq_{n-1}}$ for some $0\leq i<a_{n+1}-1,0<k<a_n$. Note that
	\begin{enumerate}
		\item $\abs{x_{iq_n}}<\abs{x_{iq_n+kq_{n-1}}}$,
		\item $\abs{x_{(a_{n+1}-1)q_n}}<\abs{x_{(a_{n+1}-1)q_n+kq_{n-1}}}$,
		\item for $1\leq m<q_{n-1}$ the $f^m$-images of all points involved in these inequalities have the same signs,
		\item $x_{iq_n+q_{n-1}}$ and $x_{(a_{n+1}-1)q_n+q_{n-1}}$ have the same sign.
	\end{enumerate}
	Hence, since $x_{(a_{n+1}-1)q_n+q_{n-1}}$ and $x_{(a_{n+1}-1)q_n+(k+1)q_{n-1}}$ have opposite signs, $0$ and $x_{iq_n+(k+1)q_{n-1}}$ are on the same side of $x_{iq_n+q_{n-1}}$. But by $\theta$-recurrence we get
	$\abs{x_{iq_n+(k+1)q_{n-1}}}>\abs{x_{iq_n+q_{n-1}}}$, so $x_{iq_n+q_{n-1}}$ and $x_{iq_n+(k+1)q_{n-1}}$ must have opposite signs. Thus, we have shown that for $0\leq i<a_{n+1},0<k<a_n$, points $x_{iq_n+q_{n-1}}$ have the same sign while points $x_{iq_n+(k+1)q_{n-1}}$ have the same opposite sign. We have to determine this sign.
	
	Consider a pair of points $x_0,x_{q_{n-1}}$. We have $\abs{x_0}<\abs{x_{q_{n-1}}}$ and for $1\leq m<q_{n-1}$ the $f^m$-images of both points have the same signs, whence $f^{q_{n-1}-1}$ is orientation-preserving on $[x_1,x_{q_{n-1}+1}]$ if $\Neg(q_{n-1})$ is even and orientation-reversing otherwise. But since $x_{q_{n-1}}$ and $x_{2q_{n-1}}$ have opposite signs, $x_{q_{n-1}}$ must be negative if $f^{q_{n-1}-1}$ is orientation-preserving and positive otherwise. So from Lemma~\ref{lmm:number_of_negative_terms} $x_{q_{n-1}}<0$ if $n-1\equiv 0,1\mod 4$ and $x_{q_{n-1}}>0$ otherwise.
	
	\textbf{Induction step for $\mathbf{a_n=1}$.} In this case we need to determine signs of $x_{iq_n+q_{n-1}}$ and $x_{a_{n+1}q_n+l}$ where $0\leq i<a_{n+1},0<l<q_{n-1}$.
	
	First, we deal with points $x_{a_{n+1}q_n+l}$. By $\theta$-recurrence we have
	$$\abs{x_{(a_{n+1}-1)q_n}}<\abs{x_{a_{n+1}q_n}}<\abs{x_{(a_{n+1}-1)q_n+q_{n-1}}}.$$
	For $1\leq m<q_{n-2}$ the $f^m$-images of $x_{(a_{n+1}-1)q_n}$ and $x_{(a_{n+1}-1)q_n+q_{n-1}}$ have the same signs, whence
	$$x_{a_{n+1}q_n+m}\in[x_{(a_{n+1}-1)q_n+m},x_{(a_{n+1}-1)q_n+q_{n-1}+m}].$$
	Thus, the points $x_{a_{n+1}q_n+m}$ have the same sign as $x_m$ which agrees with the hypothesis.
	Taking $m=q_{n-2}$, we obtain
	$$x_{a_{n+1}q_n+q_{n-2}}\in[x_{(a_{n+1}-1)q_n+q_{n-2}},x_{a_{n+1}q_n}].$$ Denote by $y_1$ one of the points $x_{(a_{n+1}-1)q_n+q_{n-2}},x_{a_{n+1}q_n}$ which has the same sign as $x_{a_{n+1}q_n+q_{n-2}}$ (pick bigger of them if signs coincide). This means that 
	$$\abs{x_0}<\abs{x_{a_{n+1}q_n+q_{n-2}}}<\abs{y_1}.$$
	
	If $a_{n-1}>1$ we proceed: for $1\leq m<q_{n-2}$ the $f^m$-images of $x_0$ and $y_1$ have the same signs, whence
	$$x_{a_{n+1}q_n+q_{n-2}+m}\in[x_m,f^m(y_1)].$$
	The points $x_{a_{n+1}q_n+q_{n-2}+m}$ have the same sign as $x_m$ which agrees with the hypothesis.
	Taking $m=q_{n-2}$, we obtain
	$$x_{a_{n+1}q_n+2q_{n-2}}\in[x_{q_{n-2}},f^{q_{n-2}}(y_1)].$$ Denote by $y_2$ one of the points $x_{q_{n-2}},f^{q_{n-2}}(y_1)$ which has the same sign as $x_{a_{n+1}q_n+q_{n-2}}$ (pick bigger of them if signs coincide). This means that 
	$$\abs{x_0}<\abs{x_{a_{n+1}q_n+2q_{n-2}}}<\abs{y_2}.$$
	
	After repeating this procedure $a_{n-1}-1$ times, we see that the signs of points $x_{a_{n+1}q_n+kq_{n-2}}+m$ with $0<k<a_{n-1}-1, 0<m<q_{n-2}$ satisfy the hypothesis and have a system of inequalities
	$$\abs{x_0}<\abs{x_{a_{n+1}q_n+kq_{n-2}}}<\abs{y_k}$$
	where $1<k\leq a_{n-1}$.
	
	Note that $y_{a_{n-1}}$ is equal either to $x_{(a_{n+1}-1)q_n+a_{n-1}q_{n-2}}$, or to $x_{a_{n+1}q_n+(a_{n-1}-1)q_{n-2}}$, or to $x_{jq_{n-2}}$ with $0<j<a_{n-1}$. But the latter two cases are simply not possible because they imply that $x_{q_{n+1}}=x_{a_{n+1}q_n+a_{n-1}q_{n-2}}+q_{n-3}$ belongs either to $[x_m,x_{a_{n+1}q_n+(a_{n-1}-1)q_{n-2}+q_{n-3}}]$ or to $[x_m,x_{jq_{n-2}+q_{n-3}}]$ which contradicts to the statement that $q_{n+1}$ is a closest recurrence time.
	
	Thus, $y_{a_{n-1}}=x_{(a_{n+1}-1)q_n+a_{n-1}q_{n-2}}$, which is only possible if for every $1<k\leq a_{n-1}$ holds $y_k=x_{(a_{n+1}-1)q_n+kq_{n-2}}$, that is, $x_{a_{n+1}q_n+kq_{n-2}}$ and $x_{(a_{n+1}-1)q_n+kq_{n-2}}$ have the same sign. This confirms the hypothesis for indices $(a_{n+1}-1)q_n+kq_{n-2}$.
	
	Using the inequality
	$$\abs{x_0}<\abs{x_{a_{n+1}q_n+a_{n-1}q_{n-2}}}<\abs{x_{(a_{n+1}-1)q_n+a_{n-1}q_{n-2}}}$$
	and applying $f$ in the usual way $m$ times we obtain for $0<m<q_{n-3}$
	$$x_{a_{n+1}q_n+a_{n-1}q_{n-2}+m}\in[x_m,x_{(a_{n+1}-1)q_n+a_{n-1}q_{n-2}+m}],$$
	which confirms the hypothesis for indices $a_{n+1}q_n+a_{n-1}q_{n-2}+m$.
	
	Now we are only left to determine signs of $x_{iq_n+q_{n-1}}$ with $0\leq i<a_{n+1}$. For this consider the inequalities
	$$\abs{x_0}<\abs{x_{q_n}}<\abs{x_{2q_n}}<\dots<\abs{x_{a_{n+1}q_n}},$$
	which are simply the collection of inequalities saying that the starting point of a block of $[a_1,a_2,...,a_n]$-recurrence is closer to $0$ than its ending point. For $0<m<q_{n-1}$, the $f^m$-images of all involved points have the same sign, which means that for $m=q_{n-1}$ their order on the real line is either the same as the order of the absolute values above if $\Neg(q_{n-1})$ is even, and inverted otherwise. Hence, since $x_{q_{n+1}}=x_{a_{n+1}q_n+q_{n-1}}$ is a point of closest recurrence, all points $x_{iq_n+q_{n-1}}$ must have the same sign: they are negative if the order is preserved, and positive otherwise. By Lemma~\ref{lmm:number_of_negative_terms} this encodes as follows: $x_{iq_n+q_{n-1}}$ is negative if $n-1\equiv 0,1\mod 4$, and positive otherwise.
	
	\textbf{Basis of induction.} The induction step from $n$ to $n+1$ uses information from steps $n,n-1$ if $a_n>1$ and information from steps $n,n-1,n-2,n-3$ if $a_n=1$. Therefore, it is enough to provide the basis for $n\leq 4$. Moreover, as shown in the paragraph before Definition~\ref{defn:admissible_angles}, we only need to consider those $\theta$ whose continued fraction $[a_1,a_2,a_3,a_4]$ is the beginning of a continued fraction of an admissible angle: $\theta=[1,1,1,a_4]$.
	
	In this case $q_1=1,q_2=2,q_3=3,q_4=3a_4+2$. By $\theta$-recurrence, $x_{iq_3+1}<0<x_{iq_3+2}$ for $0\leq i<a_4$ and we only need to determine the sign of $x_{a_4q_3+1}$. Since $\abs{x_0}<\abs{x_{a_4q_3}}<\abs{x_{(a_4-1)q_3+q_2}}$, we have $x_{a_4q_3+1}\in[x_1,x_{a_4q_3}]$. If $a_4=1$, then $a_4q_3=q_3$ is a time of closest recurrence, and therefore $x_{a_4q_3+1}=x_4<0$. If $a_4>1$, then we should take a look at the pair $x_2,x_{(a_4-1)q_3+q_2}$ satisfying $\abs{x_0}<x_{(a_4-1)q_3+2}<x_2$. Applying $f$, we get $x_{a_4q_3}<x_3$, and, as $x_3$ is a time of closest recurrence, $x_{a_4q_3}<0$. Hence, $x_{a_4q_3+1}<0$.
\end{proof}

To show existence of a $\theta$-recurrent unimodal map one can either apply the criterion from \cite{MT}, which also shows that such map is realized as a real quadratic polynomial, or construct it explicitly. We choose the latter option because it provides some additional information on the order of $\{x_k\}$ on the real line. 

\begin{thm}[Construction of a $\theta$-recurrent map]
	\label{thm:map_construction}
	For every admissible angle $\theta$ there is a $\theta$-recurrent unimodal map $f$ with critical orbit $\mathcal{O}=\{0,x_1,x_2,...\}$ such that every point in the closure $\overline{\mathcal{O}}$ is encoded by formal series (finite or infinite) $\varkappa=\sum_{i=1}^{\infty}\gamma_n q_n$ by $\varkappa\mapsto x_\varkappa$. The correspondence is a homeomorphism from all words in the number system associated to $\theta$ endowed with product topology, and satisfying $x_{1+\varkappa}=f(x_\varkappa)$.
\end{thm}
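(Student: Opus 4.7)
The strategy is to construct the orbit $\overline{\mathcal{O}}$ as an explicit subset of $I=[-1,1]$ and only afterwards recover the map $f$ from the prescribed identity $f(x_\varkappa)=x_{1+\varkappa}$. The combinatorial data is already in hand: Lemma~\ref{lmm:signs} fixes the sign of every $x_k$, and the closest-recurrence inequalities of Definition~\ref{defn:recurrence_intervals} organise the $x_k$'s into nested blocks of length $q_N$. What remains is to choose the magnitudes consistently, to understand the limits, and to extend the result to a unimodal map on $I$.

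First, I will build the finite truncations $\{x_0,\dots,x_{q_N-1}\}\subset I$ by induction on $N$. Passing from $N$ to $N+1$, I view the $[a_1,\dots,a_{N+1}]$-recurrent configuration as a concatenation of $a_{N+1}+1$ scaled copies of the $N$-th configuration: the blocks of length $q_N$ starting at $iq_N$ for $0\le i<a_{N+1}$, plus one tail block of length $q_{N-1}$. Lemma~\ref{lmm:signs} provides the signs of all endpoints, and the inequalities $\absv{x_{iq_N}}<\absv{x_{(i+1)q_N}}<\absv{x_{a_{N+1}q_N}}<\absv{x_{(a_{N+1}-1)q_N+kq_{N-1}}}$ arising in that lemma dictate into which subintervals of the existing configuration the new blocks must be inserted. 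Choosing the magnitudes of the new layers to shrink geometrically in $N$ leaves enough room for the insertion to be carried out at every level; stability of the procedure (each $x_k$ is fixed from step $N$ on once $k<q_N$) produces well-defined points $x_k\in I$ for every $k\in\mathbb{N}$.

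For an infinite word $\varkappa=\sum\gamma_nq_n$, I then define $x_\varkappa$ as the limit of $x_{\varkappa_m}$ along its truncations $\varkappa_m=[\gamma_1\ldots\gamma_m00\cdots]$; the shrinking-block construction makes this limit well-defined and continuous in the product topology, so $\varkappa\mapsto x_\varkappa$ is a continuous bijection from a compact space onto $\overline{\mathcal{O}}$ and hence a homeomorphism. I then set $f(x_\varkappa):=x_{1+\varkappa}$ on $\overline{\mathcal{O}}$, which is continuous because the shift $\varkappa\mapsto 1+\varkappa$, governed by the carry rule $a_{n+1}q_n+q_{n-1}\leftrightarrow q_{n+1}$, is a homeomorphism of the space of formal words. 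Finally, I extend $f$ piecewise monotonically to the gaps of $\overline{\mathcal{O}}$ in $I$, keeping $0$ as the minimum and $f(\pm1)=1$. Unimodality and $\theta$-recurrence of the resulting map follow by reading off Definition~\ref{defn:recurrence_intervals} from the inductive construction.

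The main obstacle I anticipate is the consistency of the insertion step: one has to verify that the sign, order, and nesting constraints of $\theta$-recurrence never contradict each other and that no accidental closest recurrence is created at a time $k\neq q_n$. The case $a_n=1$ is the most delicate, since, as in the long sub-argument of Lemma~\ref{lmm:signs}, the positions of the points inside the tail block must be pinned down by a further sub-induction on the scales $q_{n-2},q_{n-3},\ldots$. Once this bookkeeping is complete, continuity of the encoding at ``maximal'' words (those with $\gamma_n=a_{n+1}$ for infinitely many $n$) is automatic from the shrinking-diameter estimate, and the proof concludes.
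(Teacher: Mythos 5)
Your proposal follows the same strategy the paper uses: an inductive, layer-by-layer insertion of critical-orbit points at exponentially shrinking scales, with signs and relative order prescribed by Theorem~\ref{thm:signs}, followed by passage to the closure identified with the space of formal Ostrowski words in the product topology, definition of $f$ by the index shift, and extension to a unimodal map by interpolation. One clarification about the obstacle you flag: the paper resolves the consistency of each insertion step not by reproducing the long sub-induction of Lemma~\ref{lmm:signs}, but by a lighter check --- at every stage the only place where monotonicity of the nascent map could fail is at the newly inserted point $x_{q_{n+1}}$ adjacent to $0$, and there it reduces to comparing the parity of $\Neg(q_{n-1})$ (Lemma~\ref{lmm:number_of_negative_terms}) with the already-determined sign of $x_{q_{n-1}}$; the signs of all other new points are forced by Theorem~\ref{thm:signs}, which makes the rest of the insertion automatic.
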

\begin{proof}
	An instance of such construction is sketched on Picture~\ref{pic:example}.
	
	First, we define a map $f$ only on $\mathcal{O}$, that is, we construct $\mathcal{O}$ in such a way that monotonicity on the left and on the right from $0$ is respected. Afterwards, we extend this $f$ to $\overline{\mathcal{O}}$ and to a unimodal map. 
	
	Denote $\mathcal{O}_n:=\{x_0,x_1,...,x_n\}$. We construct $\mathcal{O}$ inductively. Pick arbitrary real $x_1,x_2,x_3$ satisfying
	$$-1<x_1<x_0=0<x_3<x_2<-x_1.$$	
	Clearly, $f:\mathcal{O}_2\to\mathcal{O}_3$ satisfies the monotonicity property of a potential unimodal map. Assume that we have $\mathcal{O}_{q_n}$ with the property that the interval $(0,x_{q_{n-1}})$ is disjoint from $\mathcal{O}_{q_n}$ except possibly of $x_{q_n},x_{2q_n},...,x_{a_{n+1}q_n}$, the interval $(0,x_{q_n})$ is disjoint from $\mathcal{O}_{q_n}$, and construct $\mathcal{O}_{q_{n+1}}$.
	
	\begin{enumerate}
		\item Case $a_{n+1}=1.$ For every $0<m<q_{n-1}$, choose as $x_{m+q_n}$ any point in $I$ so that every interval $[x_m,x_{m+q_n}]$ has length less than $1/2^n$, intersects with $\mathcal{O}_{q_{n+1}-1}$ only at endpoints, and $x_m<x_{m+q_n}$ if $\Neg(m)$ is even and $x_m>x_{m+q_n}$ otherwise. Clearly, the monotonicity relation holds for $f:\mathcal{O}_{q_{n+1}-2}\to\mathcal{O}_{q_{n+1}-1}$.
		
		Pick as $x_{q_{n+1}}=x_{q_{n-1}+q_n}$ an arbitrary point such that the interval $(x_{q_{n+1}},0)$ is disjoint from $\mathcal{O}_{q_{n+1}}$, has length less than $1/2^n$, and the sign of $x_{q_{n+1}}$ is determined by $(1)$ of Theorem~\ref{thm:signs}. We need to show that monotonicity holds for $f:\mathcal{O}_{q_{n+1}-1}\to\mathcal{O}_{q_{n+1}}$. Note that the only points of $\mathcal{O}_{q_{n+1}}$ lying in the open interval $(x_{q_{n+1}},x_{q_{n-1}})$ are $0$ and possibly $x_{q_n}$. On the other hand, $(x_{q_{n-1}-1+q_n},x_{q_{n-1}-1})$ cannot contain $x_{q_n-1}$ by construction. Hence, we only need to check correctness of monotonicity for a pair $x_{q_{n-1}-1+q_n},x_{q_{n-1}-1}$. But since $\Neg(q_{n-1})$ is even when $x_{q_{n-1}}<0$, and odd otherwise, and also trivially $(-1)^{\Neg(q_{n-1}-1)}\sgn x_{q_{n-1}-1}=(-1)^{\Neg(q_{n-1})},$
		to satisfy the monotonicity, $x_{q_{n+1}}$ and $0$ must be on the same side of $x_{q_{n-1}}$, which is fulfilled by construction.
		
		\item Case $a_{n+1}>1.$ For every $0<m<q_n$, choose as $x_{m+q_n}$ any point in $I$ so that every interval $[x_m,x_{m+q_n}]$ has length less than $1/2^n$, intersects with $\mathcal{O}_{q_{n+1}-1}$ only at endpoints, and $x_m<x_{m+q_n}$ if $\Neg(m)$ is even and $x_m>x_{m+q_n}$ otherwise.The monotonicity relation holds for $f:\mathcal{O}_{2q_n-2}\to\mathcal{O}_{2q_n-1}$.
		
		Pick as $x_{2q_n}$ an arbitrary point such that the interval $(x_{2q_n},0)$ is disjoint from $\mathcal{O}_{2q_n}$, has length less than $1/2^n$, and the sign of $x_{2q_n}$ is determined by $(2)$ of Theorem~\ref{thm:signs}. We need to show that monotonicity holds for $f:\mathcal{O}_{2q_n-1}\to\mathcal{O}_{2q_n}$. The only point of $\mathcal{O}_{2q_n}$ lying in the open interval $(x_{2q_n},x_{q_n})$ is $0$. Hence, we only need to check correctness of monotonicity for pair $x_{2q_n-1},x_{q_n-1}$. But since $\Neg(q_n)$ is even when $x_{q_n}<0$, and odd otherwise, to satisfy the monotonicity, $x_{2q_n}$ and $0$ must be on the same side of $x_{q_n}$, which is fulfilled by construction.
		
		If $a_n>2$, we proceed. For every $0<m\leq(a_{n+1}-2)q_n$, choose as $x_{m+2q_n}$ any point in $I$ so that every interval $[x_{m+q_n},x_{m+2q_n}]$ has length less than $1/2^n$, intersects with $\mathcal{O}_{a_{n+1}q_n-1}$ only at endpoints, and $x_{m+q_n}<x_{m+2q_n}$ if $\Neg(m)$ is even and $x_{m+q_n}>x_{m+2q_n}$ otherwise. The monotonicity relation holds for $f:\mathcal{O}_{a_{n+1}q_n-1}\to\mathcal{O}_{a_{n+1}q_n}$. Note that for $0<k<q_n$, we have
		$$\Neg(k+q_n)\equiv\Neg(k+2q_n)\equiv...\equiv\Neg(k+(a_{n+1}-1)q_n)\mod 2$$
		hence, points in $\mathcal{O}_{a_{n+1}q_n}$ appear in form clusters of $a_{n+1}$ points in a row: $x_k,x_{k+q_n},...,x_{k+(a_{n+1}-1)q_n}$ where $0<k< q_n$. On the other hand, for $k=q_n$, point $x_{q_n}$ has opposite sign than a cluster of $a_{n+1}-1$ points in a row: $x_{2q_n},...,x_{a_{n+1}q_n}$. And these points are further from $0$ as the index is bigger.
		
		Now, we basically repeat the considerations from the case of $a_{n+1}=1$. For every $0<m<q_{n-1}$, choose as $x_{m+a_{n+1}q_n}$ any point in $I$ so that every interval $[x_{m+(a_{n+1}-1)q_n},x_{m+a_{n+1}q_n}]$ has length less than $1/2^n$, intersects with $\mathcal{O}_{q_{n+1}-1}$ only at endpoints, and $x_{m+(a_{n+1}-1)q_n}<x_{m+a_{n+1}q_n}$ if $\Neg(m)$ is even and $x_m+(a_{n+1}-1)q_n>x_{m+a_{n+1}q_n}$ otherwise. The monotonicity relation holds for $f:\mathcal{O}_{q_{n+1}-2}\to\mathcal{O}_{q_{n+1}-1}$.
		
		Pick as $x_{q_{n+1}}$ a point such that the interval $(x_{q_{n+1}},0)$ is disjoint from $\mathcal{O}_{q_{n+1}}$, has length less than $1/2^n$, and the sign of $x_{q_{n+1}}$ is determined by $(1)$ of Theorem~\ref{thm:signs}. We need to show that monotonicity holds for $f:\mathcal{O}_{q_{n+1}-1}\to\mathcal{O}_{q_{n+1}}$. The only points of $\mathcal{O}_{q_{n+1}}$ lying in the interval $(x_{q_{n+1}},x_{q_{n-1}+(a_{n+1}-1)q_n})$ are $0$ and possibly $x_{q_n},x_{2q_n},...,x_{a_{n+1}q_n}$. The interval $(x_{q_{n+1}}-1,x_{q_{n-1}-1+(a_{n+1}-1)q_n})$ cannot contain $x_{q_n-1},x_{2q_n-1},...,x_{a_{n+1}q_n-1}$ by construction. Hence, one should only check correctness of monotonicity for pair $(x_{q_{n+1}}-1,x_{q_{n-1}-1+(a_{n+1}-1)q_n})$. But since $\Neg(q_{n-1})$ is even when $x_{q_{n-1}}<0$, and odd otherwise,
		to satisfy the monotonicity, $x_{q_{n+1}}$ and $0$ must be on the same side of $x_{q_{n-1}}$, which is again fulfilled by construction.
	\end{enumerate}
	
	Thus, constructed $f:\mathcal{O}\to\mathcal{O}$ is decreasing on the left from $0$, and increasing on the right. Note that because of its definition, all limit points of $\mathcal{O}$ are limit points of sequences of finite words $[\gamma_1],[\gamma_1\gamma_2],...,[\gamma_1\gamma_2\cdots\gamma_n],...$, that is, they can be identified with infinite words $\varkappa=[\gamma_1\gamma_2\cdots]$. As a bijection from a compact into a Hausdorff space, the correspondence is homeomorphism.
	
	Finally, $f:\overline{\mathcal{O}}\to\overline{\mathcal{O}}$ extends to a unimodal map by linear interpolation. We only need to show that it is $\theta$-recurrent. But this follows from the construction: for $q_{n+1}=a_{n+1}q_n+q_{n-1}$, the orbits of length $q_n$ of points $0,x_{q_n},...,x_{(a_{n+1}-1)q_n}$ are split into clusters of $q_{n+1}$ points and comparison relations are the same for any point in the cluster. Hence, if we know, that $0$ is $[a_1,...,a_n]$-recurrent, it follows that $0$ is also $[a_1,...,a_{n+1}]$-recurrent.
\end{proof}

From the construction in Theorem~\ref{thm:map_construction} we can easily see how the points of $\mathcal{O}$ are placed on the real line:
\begin{enumerate}
	\item $x_1<0, x_2>0$ and all other points of $\mathcal{O}$ are between them;
	\item $\abs{x_{q_n}}>\abs{x_{q_{n+1}}}$;
	\item for $a_{n+1}>1$, $\abs{x_{q_{n-1}}}>\abs{x_{a_{n+1}q_n}}>\abs{x_{(a_{n+1}-1)q_n}}>\cdots>\abs{x_{q_n}}\to 0$;
	\item signs of the points above are controlled by the conditions in Theorem~\ref{lmm:signs};
	\item if $m=[\gamma_1\cdots\gamma_k\gamma_{k+1}\cdots]$, $n=[\gamma_1\cdots\gamma_k\gamma_{k+1}'\cdots]$ and $\gamma_{k+1}\neq\gamma_{k+1}'$, then $x_m>x_n$ iff $x_{[\gamma_1\cdots\gamma_k\gamma_{k+1}]}>x_{[\gamma_1\cdots\gamma_k\gamma_{k+1}']}$, that is to compare $x_m$ an $x_n$ it is enough to compare compare points corresponding to the minimal non-equal finite words of equal size in their Ostrowski presentation;
	\item for every $x_{kq_n}$, points $x_{kq_n+lq_m}, m>n$ converge to $x_{kq_n}$ monotonically as in item (3).
\end{enumerate}

We also need the usual notion of a \emph{kneading sequence} of $x_0$. This is a sequence $\{k_i\}_{i>0}$ satisfying $k_i=0$ if $x_i<0$ and $k_i=1$ if $x_i>0$. Due to Theorem~\ref{thm:signs} every irrational $\theta$ determines some kneading sequence. Due to Theorem~\ref{thm:map_construction}, it cannot be eventually periodic---this would imply that $0$ is periodic point. Therefore, every such kneading sequence defines a unique possible order of points $\mathcal{O}$ on the real line. That is, different admissible angles induce different kneading sequences.

As for Fibonacci map in \cite{LM}, every $\theta$-recurrent map with no \emph{homtervals} (intervals mapped homeomorphically by all iterates of the map) are topologically conjugate to the $\theta$-recurrent map constructed in Theorem~\ref{thm:map_construction}.

\begin{proof}[Proof of Theorem~\ref{thm:theta_recurrence}(2)]
	Note that $\theta>1/2$ because $a_1=1$. Denote $\theta_n:=q_n\alpha-p_n$. Construct the semiconjugacy $\varphi:\overline{\mathcal{O}}\to S^1$ using formula~\ref{eqn:ostrowski_reals} from Appendix as follows. Given a word $\varkappa=[\gamma_0\gamma_1\gamma_2\cdots]$, let 
	$$\varphi(x_\varkappa)=\sum_{k=0}^{\infty}\gamma_k\theta_k.$$
	The series above converges absolutely, so $\varphi$ is well defined and continuous. Clearly, $\varphi(f(x_\kappa))=\varphi(x_\kappa)-\theta$, which corresponds to rotation by the angle $-\theta$.
\end{proof}

Note, that this semiconjugacy is one-to-one except on the backward orbit of $0$.

Now, we provide a description of $\overline{\mathcal{O}}$, from which it is easy to see that $\overline{\mathcal{O}}$ is a Cantor set. It will often be used in the sequel.

Let $I_0^n$ be the smallest closed interval containing $x_{a_{n+1} q_n},x_{q_n},x_{q_{n+1}},x_{q_{n+2}}$. This means that if $a_{n+1}>1$, then $I^n_0=[x_{a_{n+1}q_n},x_{q_n}]$, and otherwise $I_0^n$ is equal to the biggest out of $[x_{q_n},x_{q_{n+1}}]$ and $[x_{q_n},x_{q_{n+2}}]$. Further, for $0<k<q_{n-1}$, let $I_k^n:=f^k(I_0^n)=[x_k,x_{k+a_{n+1}q_n}]$. From the relations in Theorem~\ref{thm:signs} one can see immediately the the restriction of $f$ to $I_k^n$ with $0<k<q_n$ are homeomorphisms, and $f(I_{q_{n-1}-1}^n)=[x_{q_{n-1}},x_{q_{n+1}}]\ni 0$. Next, for $q_{n-1}\leq k< q_n$ define intervals $J_k^n$ depending on the value of $a_{n+1}$ (note that here we changed the index range of $J_k^n$ comparing to \cite{LM} to obtain shorter indices):
\begin{itemize}
	\item $J_k^n:=[x_k, x_{k+(a_{n+1}-1)q_n}]$ if $a_{n+1}>1$,
	\item $J_k^n:=[x_k,x_{k+a_{n+2}q_{n+1}}]$ if $a_{n+1}=1$.
\end{itemize}
Again, the restriction of $f$ to any of $J_k^n$ is a homeomorphism and $f(J_{q_n-1}^n)$ is equal either to $[x_{q_n},x_{a_{n+1}q_n}]$ if $a_{n+1}>1$ or to $[x_{q_n},x_{q_{n+2}}]$ otherwise, but in both cases contains $0$.

Taking into account that all $I_k^n,J_l^n$ are mutually disjoint, define 
$$M^n:=(\bigcup_{k=0}^{q_{n-1}-1}I^n_k)\cup(\bigcup_{k=q_{n-1}}^{q_n-1}J_k^n).$$
We have
$$I_k^n=I_k^{n+1}\cup J_{k+q_n}^{n+1}\cup J_{k+2q_n}^{n+1}\cup\cdots\cup J_{k+a_{n+1}q_n}^{n+1}$$
and either
$$J_k^n=I_k^{n+1}\cup J_{k+q_n}^{n+1}\cup J_{k+2q_n}^{n+1}\cup\cdots\cup J_{k+(a_{n+1}-1)q_n}^{n+1}$$
if $a_{n+1}>1$, or
$$J_k^n=I_k^{n+1}$$
if $a_{n+1}=1$.

Thus, it is easy to see that $\overline{\mathcal{O}}=\bigcap M^n$ (there are no homtervals) and hence $\overline{\mathcal{O}}$ is a Cantor set.

We also extend the above definition of $J_k^n$ to the case $0<k<q_{n-1}$ and $k=q_n$. Clearly, each $f:J_k^n\to J_{k+1}^n$ is a homeomorphism.

We conclude this section by a technical lemma about combinatorics of $\theta$-recurrent maps. In the following section it will help us to pull-back certain intervals homeomorphically a ``maximal'' number of times. More precisely, it follows from the lemma that given two neighbouring intervals of $M^n$ containing (respectively) points $x_k,x_l$ with $k<l<q_n$, the convex hull of their union can be pulled-back homeomorfically $l$ times along the orbit $x_0,x_2,\dots,x_l$.

\begin{lmm}
	\label{lmm:neighbours}
	Let $f$ be a $\theta$-recurrent map. Fix some $n\in\mathbb{N}$ and two indices $i,j$ such that $0<j<i<q_n$. If for every $0<k\leq j$, the points $x_k$ and $x_{i-j+k}$ have the same sign, then the interval of $M^n$ containing $x_j$ is contained in $[x_i,x_j]$.
\end{lmm}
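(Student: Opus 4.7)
The plan is to combine a monotone pullback coming from the sign hypothesis with a description of the $M^n$ interval $K_j$ around $x_j$ as a fold-image of a neighborhood of $0$, and then exploit the disjointness of the $M^n$ partition to pinch $K_j$ inside $[x_i,x_j]$.

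Set $K=i-j\in(0,q_n)$ and assume without loss of generality that $x_K>0$ (the opposite sign is symmetric). First I would show by induction on $m=0,1,\dots,j$ that $f^m$ restricts to a monotone homeomorphism from $[0,x_K]$ onto the unordered interval with endpoints $x_m,x_{K+m}$. The inductive step uses the hypothesis directly: the two endpoints share a sign, so the image lies in a single monotonicity component of $f$, and one more application of $f$ remains monotone. In particular, $f^j$ is a homeomorphism from $[0,x_K]$ onto $[x_j,x_i]$.

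Next I would identify $K_j$ as $f^j(V)$ for some subinterval $V$ of $I_0^n$ having $0$ as a boundary point. For $0<j<q_{n-1}$ this is the defining equality $K_j=I_j^n=f^j(I_0^n)$. For $q_{n-1}\le j<q_n$, one uses $J_j^n=f^{j-q_{n-1}}(J_{q_{n-1}}^n)$ together with the fact that $J_{q_{n-1}}^n$ itself is $f^{q_{n-1}}$ of the one-sided interval $[0,x_{(a_{n+1}-1)q_n}]$ (or $[0,x_{a_{n+2}q_{n+1}}]$ when $a_{n+1}=1$); monotonicity of $f^{q_{n-1}}$ on this interval follows from Theorem~\ref{thm:signs}(3) applied to the Ostrowski presentation of $(a_{n+1}-1)q_n$ (or of $a_{n+2}q_{n+1}$) together with the inductive argument as above. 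Because $f^j$ has a fold-type critical point at $0$, both sides of $0$ inside $I_0^n$ map into one prescribed side $S$ of $x_j$; in particular $K_j$ lies entirely on side $S$, and so does $[x_j,x_i]=f^j([0,x_K])$.

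Finally I would invoke disjointness: the intervals $\{I_k^n\}_{0\le k<q_{n-1}}\cup\{J_k^n\}_{q_{n-1}\le k<q_n}$ are pairwise disjoint and are indexed by $k\in\{0,1,\dots,q_n-1\}$, so for $0<j<i<q_n$ the $M^n$ intervals containing $x_j$ and $x_i$ are distinct, forcing $x_i\notin K_j$. Since $K_j$ and $[x_j,x_i]$ share the endpoint $x_j$ and both extend on side $S$, while $x_i$ is outside $K_j$, the far endpoint of $K_j$ must lie strictly between $x_j$ and $x_i$, yielding $K_j\subset[x_i,x_j]$. The main obstacle I anticipate is Step~2 in the range $q_{n-1}\le j<q_n$: one must verify carefully, via the Ostrowski presentation of $(a_{n+1}-1)q_n$ or $a_{n+2}q_{n+1}$ and Theorem~\ref{thm:signs}, that $f^{q_{n-1}}$ is indeed a monotone homeomorphism on the appropriate one-sided interval of $0$, so that $J_{q_{n-1}}^n$ (and hence $J_j^n$) is genuinely a fold-image living on the same side $S$ as $[x_j,x_i]$.
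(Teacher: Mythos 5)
Your proof is correct and rests on the same two pillars the paper uses (the sign hypothesis giving monotone iterates, and the mutual disjointness of the $M^n$ intervals), but the route is noticeably different. The paper runs a short forward induction on $k=1,\dots,j$: at $k=1$ the statement holds because $I_1^n$ is disjoint from all other $M^n$ intervals and everything sits to the right of the global minimum $x_1$, and then one pushes the inclusion $K_k\subset[x_{i-j+k},x_k]$ forward one step at a time using $K_{k+1}\subset f(K_k)$ and the sign hypothesis. You instead build the whole map $f^j:[0,x_K]\to[x_j,x_i]$ at once, identify $K_j$ as the image of a one-sided neighbourhood of $0$ under the fold $f^j$, note both images lie on the same side of $x_j$, and then pinch by disjointness. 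This is more explicit about the geometry near the critical point but requires the extra bookkeeping you flag in Step~2. That worry is in fact resolvable without more combinatorics: for $0<m<q_{n-1}$ the intervals $I_m^n=f^m(I_0^n)$ are disjoint from $I_0^n\ni 0$, so $f^{q_{n-1}}$ restricted to $I_0^n$ has its \emph{only} critical point at $0$; hence it is automatically monotone on any one-sided subinterval such as $[0,x_{(a_{n+1}-1)q_n}]$ or $[0,x_{a_{n+2}q_{n+1}}]$, and no appeal to Ostrowski representations is needed. With that observation your argument closes, at the cost of being longer than the paper's two-sentence induction, which never has to describe $J_j^n$ as a fold-image at all.
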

\begin{proof}
	
	Consider a pair of points corresponding to $k=1$, i.e., $x_1$ and $x_{i-j+1}$. The statement of the lemma is true for them since the other endpoint of the interval $I^n_1$ is closer to $x_1$ than any other point $x_l$ with $0<l<q_n$. The statement for $k>1$ follows immediately if we recall that $I_k^n$'s and $J_k^n$'s are obtained from each other by applying $f$ (or by shrinking to a subinterval and applying $f$ when changing from $I_k^n$'s to $J_k^n$'s).	  
\end{proof}

\section{A priori bounds}
\label{sec:a_priori}

The goal of this section is to generalize a priori estimates from \cite[Section~4]{LM} for $\theta$-recurrent maps using the same approach and machinery: Schwarz lemma and Koebe principle (see Appendix). We try to keep similar notation, statements  and flow of proofs. 

We work only with \emph{even} unimodal $C^2$ maps $f:[-1,1]\to[-1,1]$ such that $0$ is non-degenerate minimum point, $f(-1)=f(1)=1$, and $f$ coincides with a quadratic polynomial $x^2-c$ near $0$. This does not restrict the generality (see \cite[Section~4]{LM} for details).

First, we introduce some notation. For every pair $(i,n)$ such that $n>0$ and $a_{n+1}\geq i>0$ denote $d_n^i:=\absv{x_{iq_n}}$. Thus, $\{d_n^1\}_{n=1}^\infty$ are the magnitudes of closest returns and it holds
$$\dots<d_{n+1}^{a_{n+2}}<d_n^1<d_n^2<\dots<d_n^{a_{n+1}}<d_{n-1}^1<\dots$$
Further, denote
$$\delta_n^1:=\frac{d_n^1}{d_n^2},\; \delta_n^2:=\frac{d_n^2}{d_n^3},\;\dots,\;\delta_n^{a_{n+1}}:=\frac{d_n^{a_{n+1}}}{d_{n-1}^1}.$$
Note that $\delta_n^i<1$. Also we will occasionally use $\lambda_n:=d_n^1/d_{n-1}^1$. It is the asymptotic behaviour of $\delta_n^i$ that needs to be computed. However, this will be done in the next section. In the remaining part of the current section we provide a number of a priory bounds for them.

We say that the intervals $\mathbf{G}=\{G_i\}_{i=0}^k$ form a \emph{chain of intervals} if each $G_i$ is a connected component of $f^{-1}G_{i+1}$. The chain is \emph{monotone} if every $f:G_i\to G_{i+1}$ is a homeomorphism.

Next, for a family of intervals $\mathbf{G}=\{G_i\}$ let $\absv{\mathbf{G}}:=\sum_{i}\absv{G_i}$ be the measure of $\mathbf{G}$ and $\mult\mathbf{G}$ be the maximum number of $G_i$ whose intersection is non-empty.

Denote $T^n:=[x_{q_n},x_{q_n}']$ and let $\mathbf{H}^n=\{H_i^n\}_{i=1}^{q_n}$ be the pull-back of $H_{q_n}^n:=T^{n-2}$ along the orbit $\{x_i\}_{i=1}^{q_n}$. Note that unlike \cite{LM} we prefer notation $\mathbf{H}^n$ rather than $\mathbf{H}^{n+1}$ for this set (it simply fits better for the generalized case).

\begin{lmm}
	The chain $\mathbf{H}^n$ is monotone and $J_1^n\subset H_1^n$.
\end{lmm}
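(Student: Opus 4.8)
The plan is to unwind the two assertions of the lemma from the combinatorial structure established in Theorem~\ref{thm:signs} and the description of the intervals $I_k^n, J_k^n$ given just before Lemma~\ref{lmm:neighbours}, together with the closest-recurrence property of a $\theta$-recurrent map. Recall $\mathbf{H}^n$ is the pull-back of $H_{q_n}^n = T^{n-2} = [x_{q_{n-2}}, x_{q_{n-2}}']$ along the orbit $x_1, x_2, \dots, x_{q_n}$; so $H_k^n$ is the connected component of $f^{-(q_n-k)}(T^{n-2})$ containing $x_k$. Monotonicity of the chain means exactly that $0 \notin H_k^n$ for $k = 1, \dots, q_n - 1$, i.e. that the interval $I_x$-type obstruction (the critical point) is never hit before time $q_n$. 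The key observation is that $T^{n-2}$ is a small symmetric interval around $0$ whose half-length is $d_{n-2}^1 = \absv{x_{q_{n-2}}}$, and by the a priori ordering $\absv{x_{q_{n-2}}} > \absv{x_{q_n}} > \absv{x_{q_{n-1}}}$ fails — rather $d_{n-1}^1 < d_{n-2}^1$, so $T^{n-1} \subsetneq T^{n-2}$ and all closest returns after time $q_{n-2}$ land strictly inside $T^{n-2}$ (actually this needs care: $x_{q_{n-1}}$ lies inside $T^{n-2}$ since $\absv{x_{q_{n-1}}} < \absv{x_{q_{n-2}}}$).

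First I would prove monotonicity. Suppose for contradiction that $0 \in H_k^n$ for some $1 \le k < q_n$, and take the largest such $k$. Then $f$ restricted to $H_k^n$ is not monotone, but $H_{k+1}^n, \dots, H_{q_n}^n$ is a monotone chain landing in $T^{n-2}$; hence $f(H_k^n) = H_{k+1}^n$ would force $H_{k+1}^n$ to contain the critical value $x_0$'s local image, and more to the point, $x_k$ and its partner $x_k'$ both lie in $H_k^n$, so $x_{k+1} = x_{(k)+1} \in H_{k+1}^n \subset f^{-(q_n-k-1)}(T^{n-2})$ — track forward to get $x_{q_n - ?}$. The clean way: if $0 \in H_k^n$ then $x_{q_n}$, being $f^{q_n - k}(x_k)$ with $x_k$ on one side and $0 \in [x_k, x_k']$-ish, one shows $H_{k}^n$ must then contain a point of the orbit $x_j$ with $0 < j < q_n$ that returns to $T^{n-2}$ too early, contradicting that $q_{n-2}$ (and the intermediate $q_{n-1}$) are the only closest-recurrence times below $q_n$. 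Concretely: the interval $H_k^n$ is contained in $M^{n-2}$-components or straddles $0$; if it straddles $0$, then $H_k^n \supset$ a neighbourhood of $0$ of some definite size, but by $\theta$-recurrence the points $x_1, \dots, x_{q_{n-2}-1}$ avoid a neighbourhood of $0$ and $x_{q_{n-2}}$ is the first return, forcing $q_n - k = q_{n-2}$ or an earlier closest-return time — and one checks this contradicts maximality of $k$ unless $k = q_n$. I'd make this rigorous by using Lemma~\ref{lmm:neighbours} together with $q_n = a_{n+1} q_n + q_{n-1}$-type decompositions to pin down exactly which $H_k^n$ could conceivably contain $0$ and rule each out via closest recurrence.

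Second, for $J_1^n \subset H_1^n$: by definition $J_1^n = [x_1, x_{1+(a_{n+1}-1)q_n}]$ (if $a_{n+1} > 1$) or $[x_1, x_{1 + a_{n+2}q_{n+1}}]$ (if $a_{n+1} = 1$), and $H_1^n$ is the component of $f^{-(q_n - 1)}(T^{n-2})$ at $x_1$. It suffices to show $f^{q_n - 1}(J_1^n)$ is contained in $T^{n-2}$ and that the pull-back stays monotone — the latter from the first part of the lemma. So I'd check the endpoints: $f^{q_n-1}(x_1) = x_{q_n} \in T^{n-2}$ since $\absv{x_{q_n}} < \absv{x_{q_{n-2}}}$; and $f^{q_n - 1}$ of the other endpoint of $J_1^n$ is $x_{q_n + (a_{n+1}-1)q_n} = x_{a_{n+1}q_n}$ (resp. $x_{q_n + a_{n+2}q_{n+1}}$), whose absolute value is $d_n^{a_{n+1}} < d_{n-1}^1 < d_{n-2}^1$ by the a priori chain of inequalities — hence also in $T^{n-2}$. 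Since the chain $\mathbf H^n$ is monotone, $H_1^n$ is the full component and $f^{q_n - 1}|_{H_1^n}$ is a homeomorphism onto $T^{n-2}$; as $J_1^n$ is an interval at $x_1$ whose forward image lands inside $T^{n-2}$, it must lie in $H_1^n$.

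The main obstacle I anticipate is the monotonicity statement, specifically making airtight the claim that none of $H_1^n, \dots, H_{q_n-1}^n$ swallows the critical point. The subtlety is that $T^{n-2}$ is two combinatorial levels coarser than $n$, so a priori one of the early pull-backs could be large; the resolution must exploit that the closest-return times strictly below $q_n$ are precisely $q_1 < q_2 < \dots < q_{n-1}$ (and among those, the relevant ones for returning into the $T^{n-2}$-neighbourhood are $q_{n-2}$ and $q_{n-1}$), so a hypothetical $0 \in H_k^n$ would manufacture a return to a neighbourhood of $0$ at a time not on this list — a direct contradiction with Definition~\ref{defn:recurrence_intervals}(2). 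Getting the bookkeeping of "which neighbourhood, which time" exactly right is where the work lies; everything else is the routine Schwarz/Koebe-free combinatorics already set up in Section~\ref{sec:theta_recurrent}.
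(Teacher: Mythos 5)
Your treatment of the second assertion ($J_1^n\subset H_1^n$) is correct and is essentially the paper's: one checks that $f^{q_n-1}$ maps $J_1^n$ homeomorphically into $T^{n-2}$ (equivalently, $f(J_{q_n-1}^n)\subset T^{n-2}$, which follows from $d_n^1,d_n^{a_{n+1}}<d_{n-2}^1$), so $J_1^n$ lies in the component $H_1^n$ of $f^{-(q_n-1)}(T^{n-2})$ containing $x_1$.

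The monotonicity half, however, has a genuine gap in the mechanism you propose for the final contradiction. If $0\in\interior H_i^n$ for some $i<q_n$, then $x_{q_n-i}=f^{q_n-i}(0)$ lies in $\interior T^{n-2}$; but this is \emph{not} by itself ``a return to a neighbourhood of $0$ at a time not on the list of closest-return times.'' The orbit genuinely enters $\interior T^{n-2}=\interior[x_{q_{n-2}},x_{q_{n-2}}']$ at all the times $q_{n-1},2q_{n-1},\dots,a_nq_{n-1}<q_n$: only the first of these is a \emph{closest} return, but every $x_{jq_{n-1}}$ is closer to $0$ than $x_{q_{n-2}}$, since $d_{n-1}^j<d_{n-2}^1$. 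So Definition~\ref{defn:recurrence_intervals}(2) only narrows $i$ down to the $a_n$ candidates $i=q_n-jq_{n-1}$; it does not eliminate them. The actual work --- which is the entire content of the paper's proof and is absent from your plan --- is to show, for each such candidate $k=jq_{n-1}$, that the forward orbit $\{f^m([x_{q_n-k},0])\}_{m=0}^{k}$ is itself non-monotone; taking $i$ maximal with $0\in\interior H_i^n$, this yields $0\in\interior H_{i+m}^n$ for some $0<m<k$ and contradicts maximality. That step is a sign computation via Theorem~\ref{thm:signs}, not an appeal to closest recurrence, so your hedged plan to ``rule each out via closest recurrence'' points at a tool that fails precisely on the surviving candidates.
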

\begin{proof}
	If $\mathbf{H}^n$ is not monotone, then the interior of one of $H_i^n$ with $i<q_n$ must contain $0$ which maps along $\mathbf{H}^n$ to the interior of $T^{n-2}$. The post-critical points with indices less than $q_n$ contained in the interior of $T^{n-2}$ are exactly 
	$$x_{q_{n-1}},\dots,x_{a_n q_{n-1}}.$$	
	It is enough to show that for any of these points, say $x_k$, the orbit $\{f^i([x_{q_n-k},0])\}_{i=0}^k$ is not monotone. This follows easily from combinatorics (Theorem~\ref{thm:signs}).
	
	The second statement holds because $f(J_{q_n-1}^n)\subset T^{n-2}$. 
\end{proof}

For any maximal subinterval $I$ of $M^n$, except $I_1^n$ and $I_2^n$, denote by $F(I)$ the minimal interval containing $I$ and two of its neighbours in $M^n$. For any such $I$ (containing $x_k$ with $k<q_n$), denote by $\mathbf{G}=\{G_i\}_{i=0}^k$ denote the pull-back of $G_k:=F(I)$ along $\{x_i\}_{i=0}^k$.

\begin{lmm}
	\label{lmm:monotonicity_of_G}
	The chain $\{G_i\}_{i=1}^k$ is monotone and, depending on the value of $k$, the following statements hold:
	\begin{itemize}
		\item ($k<q_{n-1}$) $G_0\subset T^{n-1}$;
		\item ($k\geq q_{n-1}$) $G_0\subset [x_{a_{n+1}q_n},x_{a_{n+1}q_n}']$ if $a_{n+1}>1$, and $G_0\subset [x_{a_{n+2}q_{n+1}},x_{a_{n+2}q_{n+1}}']$ otherwise.
	\end{itemize}
\end{lmm}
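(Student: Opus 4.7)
The plan is to identify $G_k=F(I)$ explicitly as an interval whose endpoints are two postcritical points $x_\alpha,x_\beta$, and then to verify monotonicity by descending induction on $i$ together with a direct identification of $G_0$.

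First, I would observe that every maximal subinterval of $M^n$ has both endpoints in $\mathcal{O}$: indeed $I_k^n=[x_k,x_{k+a_{n+1}q_n}]$ and the analogous formulas hold for $J_k^n$. Consequently $F(I)=[x_\alpha,x_\beta]$ for indices $\alpha,\beta$ that depend on $I$ and on which maximal subintervals of $M^n$ are its two neighbours on the real line. These two neighbours are determined combinatorially from the decomposition formulas $I_k^n=I_k^{n+1}\cup J_{k+q_n}^{n+1}\cup\cdots\cup J_{k+a_{n+1}q_n}^{n+1}$ (and similarly for $J_k^n$) together with the explicit order on $\mathcal{O}$ summarized in items (1)--(6) after the proof of Theorem~\ref{thm:map_construction}. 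I would split into cases according to whether $I=I_k^n$ with $k<q_{n-1}$ or $I=J_k^n$ with $q_{n-1}\le k<q_n$, and within the latter the subcases $a_{n+1}>1$ versus $a_{n+1}=1$.

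Next, I would show by descending induction on $i$ from $i=k$ to $i=1$ that $G_i=[x_{\alpha-(k-i)},x_{\beta-(k-i)}]$ (up to orientation) and that both endpoints lie on the same side of $0$. The sign coincidence follows from an Ostrowski-representation calculation via Theorem~\ref{thm:signs}. Given the coincidence, Lemma~\ref{lmm:neighbours} yields that the interval of $M^n$ containing $x_i$ sits inside $[x_i,x_{\alpha-(k-i)}]$ (or $[x_i,x_{\beta-(k-i)}]$), so $G_i$ is contained in a tight neighbourhood of $x_i$ bounded away from $0$. This shows $f\colon G_i\to G_{i+1}$ is a homeomorphism for $1\le i<k$, proving that the chain $\{G_i\}_{i=1}^k$ is monotone.

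Finally, I would identify $G_0$. Setting $i=0$ in the induction yields endpoints $x_{\alpha-k},x_{\beta-k}$, and a short arithmetic with $\alpha,\beta$ in each case places these in the claimed interval. When $k<q_{n-1}$ the shifted indices have magnitude at most $|x_{q_{n-1}}|$, giving $G_0\subset T^{n-1}$. When $k\ge q_{n-1}$, the shifts push one endpoint to $x_{a_{n+1}q_n}$ (resp.\ $x_{a_{n+2}q_{n+1}}$) according to the two subcases of $J_k^n$, and the other to its $f$-partner via the even symmetry of $f$ near $0$, placing $G_0$ inside the stated symmetric interval. If $f$ happens to fold on $G_0$ (the only step where this can occur, since $\{G_i\}_{i\ge1}$ is monotone), the assumption that $f$ agrees with $x^2-c$ near $0$ makes $G_0$ symmetric around $0$, and the postcritical endpoint of $G_1$ controls its radius. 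The main obstacle will be the first step---pinning down the two neighbours of a given $I$ in $M^n$---because the neighbours may lie in the same parent interval of $M^{n-1}$ as $I$ or in an adjacent one, and the answer depends sensitively on the parity of $n$ modulo $4$ and on whether $a_{n+1}>1$; once this combinatorial bookkeeping is carried out the induction and the final containment follow essentially automatically from the ingredients of Section~\ref{sec:theta_recurrent}.
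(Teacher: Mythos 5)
Your high-level scaffold (write $G_k=F(I)$ with postcritical endpoints $x_\alpha,x_\beta$, track them via a descending induction, and invoke Lemma~\ref{lmm:neighbours} to keep each $G_i$ away from $0$) is a reasonable route, and the monotonicity part essentially coincides with the paper's: the paper simply cites Lemma~\ref{lmm:neighbours} (more precisely the remark after it, which asserts exactly the pull-back statement you want) and stops there.

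Where your plan genuinely diverges is the identification of $G_0$, and there is a real gap. The paper never computes the neighbours of $I$ in $M^n$; instead it finesses both cases with two short observations. For $k<q_{n-1}$ it uses that $x_{1+q_{n-1}}$ is the closest point to $x_1$ among $\{x_i\}_{i=0}^{q_n}$, forcing the folded preimage $G_0$ to lie inside $T^{n-1}$. For $k\geq q_{n-1}$ it notes that $0\in f^{q_{n-1}}([x_{a_{n+1}q_n},x_{a_{n+1}q_n}'])$ (since $x_{q_{n-1}}$ and $x_{q_{n+1}}$ have opposite signs) and then uses monotonicity of $\{G_i\}_{i\ge1}$: $G_0$ is a symmetric interval about $0$ whose $f^{q_{n-1}}$-image is contained in $G_{q_{n-1}}\not\ni 0$, so $G_0$ cannot escape the symmetric interval whose $f^{q_{n-1}}$-image does reach $0$. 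Both arguments are uniform over $k$ and over the choice of neighbours; the case analysis you flag as ``the main obstacle'' is avoided entirely.

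In your version the bookkeeping is not carried out, and the one explicit claim you make about its outcome looks wrong: you assert that for $k\geq q_{n-1}$ the shifted index lands exactly on $a_{n+1}q_n$, i.e.\ $G_0=[x_{a_{n+1}q_n},x_{a_{n+1}q_n}']$. The lemma only claims containment, and in general the endpoint index $\alpha-k$ (or $\beta-k$) depends on which $J_k^n$ you started from and which maximal intervals happen to be its neighbours in $M^n$; it is not $a_{n+1}q_n$ in general, only bounded in $\abs{\cdot}$ by $\abs{x_{a_{n+1}q_n}}$. Proving that bound by index arithmetic would require exactly the combinatorial case analysis you say you would do but have not done, and it is not clear it closes without importing something like the paper's topological observation about $f^{q_{n-1}}$. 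So as written the proposal has a genuine gap at the final step, and it also misses the much shorter route: it is worth internalizing the two one-line arguments of the paper, since they reappear in spirit in the a priori bounds of Section~\ref{sec:a_priori}.
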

\begin{proof}
	Monotonicity follows from lemma~\ref{lmm:neighbours}.
	
	Suppose $k<q_{n-1}$. Since $x_{1+q_{n-1}}$ is the closest to $x_1$ point among $\{x_i\}_{i=0}^{q_n}$, one has $G_0\subset T^{n-1}$.
	
	If $k\geq q_{n-1}$, it is enough to notice $0\in f^{q_{n-1}}([x_{a_{n+1}q_n},x_{a_{n+1}q_n}'])$. By monotonicity of $\{G_i\}_{i=1}^k$, one has $G_0\subset [x_{a_{n+1}q_n},x_{a_{n+1}q_n}']$.
\end{proof}

Proof of the next lemma goes exactly as in \cite[Lemma~4.3]{LM}.

\begin{lmm}
	\label{lmm:multiplicity}
	$\mult\mathbf{H}^n<C_1 a_n$ and $\mult\mathbf{G}<C_2$ for some universal constants $C_1,C_2$.
\end{lmm}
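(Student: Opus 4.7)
The plan is to follow the Fibonacci argument of \cite[Lemma~4.3]{LM}, but to track how the multiplicities grow with $a_n$. Both bounds rest on a common reduction: an overlap of $k$ intervals in a monotone pull-back chain produces a single point whose forward orbit visits the ``target'' interval at least $k$ times within a prescribed number of iterates. The task is then to bound the number of such visits by exploiting the $\theta$-recurrence combinatorics from Theorem~\ref{thm:signs}.

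For $\mult \mathbf{H}^n$, suppose $y \in H^n_{i_1} \cap \dots \cap H^n_{i_k}$ with $i_1 < \dots < i_k$, and set $u := f^{q_n - i_k}(y)$. Since $y \in H^n_{i_k}$ and $H^n_{q_n} = T^{n-2}$, one has $u \in T^{n-2}$; moreover $f^{i_k - i_j}(u) = f^{q_n - i_j}(y) \in T^{n-2}$ for every $j$, so the forward orbit of $u$ visits $T^{n-2}$ at $k$ distinct times in $\{0, 1, \dots, q_n - 1\}$. For the critical orbit $u = 0$, the inequalities in Theorem~\ref{thm:theta_recurrence}(1) show that the indices $0 \leq s \leq q_n$ with $\absv{x_s} \leq \absv{x_{q_{n-2}}}$ are exactly $\{0, q_{n-1}, 2q_{n-1}, \dots, a_n q_{n-1}, q_n\}$, yielding $a_n + 2$ visits. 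I would then upgrade this to arbitrary $u \in T^{n-2}$ by invoking the monotonicity of $\mathbf{H}^n$: each visit at time $t$ corresponds to a component of $f^{-t}(T^{n-2})$ that meets $T^{n-2}$, and the monotone chain structure of $\mathbf{H}^n$ caps the number of such components at $a_n + O(1)$, giving $k \leq C_1 a_n$.

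For $\mult \mathbf{G}$, the same reduction shows that an overlap of $k$ intervals $G_i$ produces a point $u \in G_0$ whose orbit re-enters $G_0$ at least $k$ times within fewer than $q_n$ iterates. By Lemma~\ref{lmm:monotonicity_of_G}, $G_0$ sits either inside $T^{n-1}$ or inside one of the intervals $[x_{a_{n+1}q_n}, x_{a_{n+1}q_n}']$, $[x_{a_{n+2}q_{n+1}}, x_{a_{n+2}q_{n+1}}']$, each of which is genuinely smaller than $T^{n-2}$. Because $G_k = F(I)$ is only the union of a maximal component of $M^n$ with its two neighbours, the combinatorics of $M^n$ forces at most a universally bounded number of close returns of the critical orbit to a set of the size of $G_0$ within any window of length $q_n$. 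Running the same pre-image-counting argument as for $\mathbf{H}^n$ yields $\mult \mathbf{G} < C_2$ with $C_2$ independent of $n$ and $i$.

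The main obstacle in both cases is lifting the visit count from the critical orbit to an arbitrary point $u$ of the target interval: one has to rule out ``extra'' components of $f^{-t}(T^{n-2})$ (respectively, of $f^{-t}(G_0)$) that meet the target but are not accounted for by a close return of $0$. In the Fibonacci setting of \cite{LM} this is handled by a tight monotonicity count — essentially, all but one of the relevant pre-image components contain a pre-image of $0$ and are thus indexed by close-return times. The same strategy should apply here, but one must verify that the $a_n$ intermediate returns at times $jq_{n-1}$, $1 \leq j \leq a_n$, each contribute only one new pre-image component — a fact that should follow from the preceding monotonicity lemma together with Lemma~\ref{lmm:neighbours}.
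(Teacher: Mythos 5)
The paper's ``proof'' of this lemma is nothing more than a pointer to \cite[Lemma~4.3]{LM}, so your attempt is an honest reconstruction rather than a comparison target; the strategy you take (overlap of pullback intervals $\Rightarrow$ repeated visits of a single orbit to the target interval, then count visits via the Ostrowski combinatorics of Theorem~\ref{thm:signs}) is the same line of reasoning that \cite{LM} use, adapted correctly to absorb the factor $a_n$.

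The reduction in your first paragraph is sound: if $y\in H^n_{i_1}\cap\cdots\cap H^n_{i_k}$ then $u:=f^{q_n-i_k}(y)\in T^{n-2}$ visits $T^{n-2}$ at the $k$ distinct times $i_k-i_j$, $j=1,\dots,k$. Your count of critical-orbit visits is also right (the interior points of $T^{n-2}$ among $\{x_s\}_{0<s<q_n}$ are exactly $x_{q_{n-1}},\dots,x_{a_nq_{n-1}}$, as the paper notes in the proof of the preceding lemma, so the count is $a_n+O(1)$). The genuine gap is the one you flag yourself: promoting this count from $u=0$ to an arbitrary $u$ in the image of the chain. Your proposed fix is circular: ``each visit at time $t$ corresponds to a component of $f^{-t}(T^{n-2})$ that meets $T^{n-2}$, and the monotone chain structure of $\mathbf{H}^n$ caps the number of such components'' restates the multiplicity bound for $\mathbf{H}^n$ at the point $u$ rather than proving it. Indeed, if you trace through the reduction, $u$ lies in $H^n_{q_n-(i_k-i_j)}$ for every $j$, so $u$ is itself a point of multiplicity $k$; you have only relocated the common point into $T^{n-2}$. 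What is actually needed is a combinatorial first-return argument for $T^{n-2}$ along the monotone chain: because the chain $\mathbf{H}^n$ pulls back along the critical orbit and the orbit of $y$ shadows $\{x_i\}$ inside the same Markov pieces, the return times $i_k-i_j$ are forced to coincide (up to bounded error) with the returns of the critical orbit itself, namely multiples of $q_{n-1}$. Making that shadowing argument precise---via Lemma~\ref{lmm:neighbours} and the Markov structure of $M^{n-2}$ or $M^{n-1}$, as you suggest in your last paragraph---is exactly the content that \cite{LM} supply and that your write-up leaves open. Two small slips: you write that $u$'s orbit ``re-enters $G_0$'' in the $\mathbf{G}$ case, but the pullback is from $G_k=F(I)$ to $G_0$, so the target of the visits is $F(I)$, not $G_0$; and your list of indices $s$ with $\absv{x_s}\le\absv{x_{q_{n-2}}}$ should include $q_{n-2}$ itself, though this does not change the $a_n+O(1)$ count.
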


The following few lemmas provide the desired a priori bounds.

\begin{lmm}
	\label{lmm:a_priori_1}
	There exists a constant $C<1$ depending only on $f$ such that for every index $n$:
	\begin{itemize}
		\item ($a_{n+1}>1$) $\delta_n^{a_{n+1}-1}\delta_n^{a_{n+1}}<C$,
		\item ($a_{n+1}=1$) $\delta_{n+1}^{a_{n+2}}\delta_n^{1}<C$.
	\end{itemize}
\end{lmm}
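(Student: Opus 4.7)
The strategy is to adapt the method of \cite[Lemma~4.4]{LM} to the generalized setting. The key analytic tool is the Koebe principle, which, combined with the uniform multiplicity bounds of Lemma~\ref{lmm:multiplicity}, converts the non-flatness of $f$ at $0$ into definite shrinking ratios. In both parts, the product of $\delta$'s to be estimated collapses telescopically: for $a_{n+1}>1$ we have $\delta_n^{a_{n+1}-1}\delta_n^{a_{n+1}}=d_n^{a_{n+1}-1}/d_{n-1}^1$, and for $a_{n+1}=1$ we have $\delta_{n+1}^{a_{n+2}}\delta_n^1=d_{n+1}^{a_{n+2}}/d_{n-1}^1$. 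In each case one must show that a ratio of magnitudes of two post-critical points at distinct scales is bounded away from $1$ by a constant depending only on $f$.

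For the case $a_{n+1}>1$, I would let $I$ be the component of $M^n$ lying closest to $x_{q_{n-1}}$ on the same side as $x_{q_{n-1}}$, so that $F(I)$ captures both scales $d_n^{a_{n+1}-1}$ and $d_{n-1}^1$ through the two chosen neighbours of $I$ in $M^n$. The pullback chain $\mathbf{G}$ is monotone by Lemma~\ref{lmm:monotonicity_of_G}, and its initial interval $G_0$ is contained in a symmetric interval around $0$ of radius of order $d_n^{a_{n+1}}$ or $d_{n+2}^{a_{n+2}}$. The Koebe principle applies because $\mult\mathbf{G}<C_2$ is universal, giving a uniform distortion bound on the map $G_0\to F(I)$. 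Since $f$ is quadratic at $0$, an application of $f$ turns ratios of distances from $0$ into their squares up to a bounded factor. Comparing the ratio $d_n^{a_{n+1}-1}/d_{n-1}^1$ with its pre-image on the $G_0$ side via this distortion bound yields an inequality of the form $d_n^{a_{n+1}-1}/d_{n-1}^1\le 1-\eta$ for some $\eta>0$ depending only on $f$.

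For the case $a_{n+1}=1$ the chain $\mathbf{H}^{n+1}$ can be used directly: its multiplicity is bounded by $C_1a_{n+1}=C_1$, so the Koebe principle gives a uniform distortion bound for the pullback of $T^{n-1}$ along the orbit of length $q_{n+1}$. Comparing the radii at the two ends of the chain, using the quadratic expansion of $f$ at $0$ to translate ratios through the critical point, yields the bound on $d_{n+1}^{a_{n+2}}/d_{n-1}^1$.

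The main obstacle is the case $a_{n+1}>1$ when $a_{n+1}$ is large, because the constant $C$ must be independent of both $n$ and $a_{n+1}$. One cannot use $\mathbf{H}^n$ here, since its multiplicity $C_1 a_n$ would make the Koebe distortion blow up; one is forced to work with the shorter chain $\mathbf{G}$ associated to the three-interval neighbourhood, whose multiplicity is universally bounded by $C_2$. The combinatorial content from Theorem~\ref{thm:signs} is what guarantees that $I$ can be chosen so that $F(I)$ simultaneously reflects the scales $d_n^{a_{n+1}-1}$ and $d_{n-1}^1$, and that $G_0$ is contained in a symmetric interval around $0$ to which the quadratic non-degeneracy can be applied. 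The delicate boundary situation $a_{n+1}=2$, in which the relevant orbit points $x_{q_n}$ and $x_{2q_n}$ lie on opposite sides of $0$, must be treated separately but by the same kind of pullback argument.
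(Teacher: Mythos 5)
Your high-level strategy matches the paper's: the paper's proof is literally a one-line reference to \cite[Lemma~4.4]{LM}, to be carried out with the monotone pullback chain $\mathbf{G}$ (Lemma~\ref{lmm:monotonicity_of_G}), the universal multiplicity bound $\mult\mathbf{G}<C_2$ (Lemma~\ref{lmm:multiplicity}), the Koebe principle, and quadratic non-degeneracy of $f$ at the origin. You correctly observe the telescoping identities $\delta_n^{a_{n+1}-1}\delta_n^{a_{n+1}}=d_n^{a_{n+1}-1}/d_{n-1}^1$ and $\delta_{n+1}^{a_{n+2}}\delta_n^1=d_{n+1}^{a_{n+2}}/d_{n-1}^1$, and you correctly identify why $\mathbf{H}^n$ cannot be used when $a_n$ is large.

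However, the core estimate is missing, and the two sentences you use in its place do not quite make sense. You claim that $F(I)$ ``captures both scales $d_n^{a_{n+1}-1}$ and $d_{n-1}^1$ through the two chosen neighbours of $I$ in $M^n$,'' but these quantities are radial distances from $0$ of the points $x_{(a_{n+1}-1)q_n}$ and $x_{q_{n-1}}$; they are not encoded in the sizes or positions of the neighbours of a component $I$ near $x_{q_{n-1}}$. Likewise, ``comparing the ratio $d_n^{a_{n+1}-1}/d_{n-1}^1$ with its pre-image on the $G_0$ side'' is not a well-defined operation — a numerical ratio has no pre-image under $f^k$. What one actually needs (and what the LM proof supplies) is to use the containment from Lemma~\ref{lmm:monotonicity_of_G}, namely $G_0\subset[x_{a_{n+1}q_n},x'_{a_{n+1}q_n}]$ (radius $d_n^{a_{n+1}}$, the middle telescoped scale), together with the fact that $G_0$ contains $I_0^n=[x_{a_{n+1}q_n},x_{q_n}]$, to trap $G_0$ between two definite scales; one then feeds the resulting Poincar\'e length at the critical-point end through the distortion bound $K(l;\|\mathbf{h}\|,|\mathbf{J}|)$ for $f^{k-1}\colon G_1\to F(I)$ and the quadratic map $f\colon G_0\to G_1$ to extract the bound on the ratio. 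This translation is precisely where the lemma has content and it is not present in your write-up.

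Two smaller points. The radius you quote in the $a_{n+1}=1$ branch, $d_{n+2}^{a_{n+2}}$, should be $d_{n+1}^{a_{n+2}}$: Lemma~\ref{lmm:monotonicity_of_G} gives $G_0\subset[x_{a_{n+2}q_{n+1}},x'_{a_{n+2}q_{n+1}}]$, whose radius is $|x_{a_{n+2}q_{n+1}}|=d_{n+1}^{a_{n+2}}$. Also, for $a_{n+1}=1$ you switch to $\mathbf{H}^{n+1}$ on the grounds that $\mult\mathbf{H}^{n+1}<C_1a_{n+1}=C_1$; this does give a universal bound, so the switch is a valid alternative, but the paper treats both branches uniformly via $\mathbf{G}$ using the two cases of Lemma~\ref{lmm:monotonicity_of_G}, and you need not introduce a second chain.
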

\begin{proof}
	The proof is identical the one of \cite[Lemma~4.4]{LM} with application of lemmas~\ref{lmm:monotonicity_of_G} and \ref{lmm:multiplicity} and replacement of $\lambda_{n+1},\lambda_n$ by $\delta_n^{a_{n+1}-1},\delta_n^{a_{n+1}}$ or $\delta_{n+1}^{a_{n+2}},\delta_n^{1}$, respectively. 
\end{proof}

\begin{lmm}
	\label{lmm:a_priori_2}
	Depending on the value of $a_{n+1}$, we have the following inequalities.
	\begin{itemize}
		\item ($a_{n+1}>1$)
		$$\frac{1}{1-(\delta_{n+1}^{a_{n+2}})^2}\leq\left(\frac{1+\delta_n^1}{1-\delta_n^1}\right)^2+O(a_n \absv{\mathbf{H}^n}),$$
		$$\frac{1}{1-(\delta_n^1)^2}\leq\left(\frac{1+\delta_n^2}{1-\delta_n^2}\right)^2+O(a_n \absv{\mathbf{H}^n}),$$
		$$\cdots,$$
		$$\frac{1}{1-(\delta_n^{a_{n+1}-2})^2}\leq\left(\frac{1+\delta_n^{a_{n+1}-1}}{1-\delta_n^{a_{n+1}-1}}\right)^2+O(a_n \absv{\mathbf{H}^n}),$$
		$$\frac{1}{1-(\delta_n^{a_{n+1}-1})^2}\leq\left(\frac{1+\delta_n^{a_{n+1}}\lambda_{n-1}}{1-\delta_n^{a_{n+1}}\lambda_{n-1}}\right)^2+O(a_n \absv{\mathbf{H}^n}).$$
		\item ($a_{n+1}=1$)
		$$\frac{1}{1-(\delta_{n+1}^{a_{n+2}})^2}\leq\left(\frac{1+\delta_n^1\lambda_{n-1}}{1-\delta_n^1\lambda_{n-1}}\right)^2+O(a_n \absv{\mathbf{H}^n}).$$
	\end{itemize}
	Constants in $O(\cdot)$ depend only on $f$. If $f$ has non-positive Schwarzian derivative, then $O(\cdot)$ is equal to $0$.
\end{lmm}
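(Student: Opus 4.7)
The plan is to adapt the proof of \cite[Lemma~4.5]{LM} almost verbatim: each of the listed inequalities arises from the same mechanism applied to a consecutive pair of closest--return magnitudes. The key idea is to view $f^{q_n}$ near $0$ as the composition of the local quadratic $f$ with the univalent branch $f^{q_n-1}$ on a neighborhood of $x_1$, and to apply the Schwarz/Koebe lemma to compare moduli of nested symmetric intervals on either side of this composition.

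First I would record the Koebe control. By Lemma~\ref{lmm:multiplicity}, the monotone pullback chain $\mathbf{H}^n$ of $T^{n-2}$ has $\mult\mathbf{H}^n=O(a_n)$, so (after the standard reduction to the case of non-positive Schwarzian derivative, cf.\ the Appendix) the distortion of $f^{q_n-1}\colon J_1^n\to J_{q_n}^n$ is $1+O(a_n\absv{\mathbf{H}^n})$; it is exactly $1$ when $Sf\leq 0$, which gives the vanishing-error version of the statement. Now fix an index $i$ and consider the pair of symmetric intervals about $0$ of half-lengths $d_n^i$ and $d_n^{i+1}$, with inner-to-outer ratio $\delta_n^i$. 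Under the local quadratic $f$ these map to nested intervals sharing the endpoint $-c=x_1$, with inner-to-outer ratio $(\delta_n^i)^2$; when reinterpreted as a Schwarz--type modulus this is precisely the factor $\frac{1}{1-(\delta_n^i)^2}$ on the left. Pulling this pair back from $x_{q_n}$ to $x_1$ through $f^{q_n-1}$, with Koebe distortion $1+O(a_n\absv{\mathbf{H}^n})$, the outer interval now has inner-to-outer ratio $\delta_n^{i+1}$ (or, at the boundary values of $i$, $\delta_{n+1}^{a_{n+2}}$ or $\delta_n^{a_{n+1}}\lambda_{n-1}$, via the identifications $d_{n+1}^{a_{n+2}}/d_n^1=\delta_{n+1}^{a_{n+2}}$ and $d_n^{a_{n+1}}/d_{n-1}^1=\delta_n^{a_{n+1}}$). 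The standard distortion bound then produces the factor $\left(\frac{1+\delta_n^{i+1}}{1-\delta_n^{i+1}}\right)^2$, and rearranging gives the claimed inequality. The case $a_{n+1}=1$ is the degenerate one with no intermediate ratios: only the single inequality connecting $\delta_{n+1}^{a_{n+2}}$ and $\delta_n^1\lambda_{n-1}$ remains.

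The main technical obstacle is the combinatorial bookkeeping at the boundary indices $i=1$ and $i=a_{n+1}$, where one must correctly match the pulled--back ratio with $\delta_{n+1}^{a_{n+2}}$ or with $\delta_n^{a_{n+1}}\lambda_{n-1}$ rather than with a bare $\delta_n^{i+1}$; this uses the inclusion $J_1^n\subset H_1^n$ and the structure of the intervals $I_k^n,J_k^n$ developed in Section~\ref{sec:theta_recurrent} to verify that in each case the symmetric reference intervals around $0$ genuinely lift inside the Koebe universe supplied by $T^{n-2}$. Once this is in place the Schwarz/Koebe step is purely formal and identical across all rows of the statement.
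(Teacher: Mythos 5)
The overall plan is on target: as the paper also does, you reduce to the Schwarz/Koebe machinery of the appendix, control the error via $\mult\mathbf{H}^n=O(a_n)$ from Lemma~\ref{lmm:multiplicity}, and obtain each row of the lemma from the same cross-ratio comparison under $f^{q_n-1}$ applied to a consecutive pair of closest-return scales. Your index bookkeeping for the middle rows (source scale pair $d_n^i,d_n^{i+1}$ producing $\frac{1}{1-(\delta_n^i)^2}$ on the left, target pair $d_n^{i+1},d_n^{i+2}$ producing $\left(\frac{1+\delta_n^{i+1}}{1-\delta_n^{i+1}}\right)^2$ on the right) is exactly the pattern the paper encodes through the maps
$f^{q_n-1}:\left([x_1,x_{1+(i+1)q_n}],[x_1,x_{1+iq_n}]\right)\to\left([x_{(i+2)q_n},x_{(i+2)q_n}'],[x_{(i+1)q_n},x_{(i+1)q_n}']\right)$,
and your remarks about the boundary indices ($\delta_{n+1}^{a_{n+2}}$ on one end, $\delta_n^{a_{n+1}}\lambda_{n-1}$ on the other, via $H_1^n$ and $T^{n-2}$) correctly identify where the deviations occur.

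There are, however, two concrete flaws. First, the claim that ``the distortion of $f^{q_n-1}\colon J_1^n\to J_{q_n}^n$ is $1+O(a_n\absv{\mathbf H^n})$; it is exactly $1$ when $Sf\leq 0$'' is wrong: a map with non-positive Schwarzian derivative is in general far from affine and its Koebe distortion is never forced to be $1$; what vanishes when $Sf\leq 0$ is the additive error in the Schwarz lemma (the amount by which Poincar\'e length may fail to be expanded), not the distortion. The $O(a_n\absv{\mathbf H^n})$ is precisely that smooth-case correction, controlled by $\mult\mathbf H^n$ and $\absv{\mathbf H^n}$ as in the smooth version in the appendix; you should not package it as a bound on $|(f^{q_n-1})'(x)/(f^{q_n-1})'(y)|$. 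Second, the direction of the main step is confused: the pair of intervals you construct at $x_1$ by applying the local quadratic is the \emph{source} of $f^{q_n-1}$, so one pushes forward from $x_1$ to $x_{q_n}$, not ``pulls back from $x_{q_n}$ to $x_1$''. Relatedly, the $f^{q_n-1}$-image of a pair anchored at $x_1$ is anchored at $x_{q_n}$, not centered at $0$; extracting $\left(\frac{1+\delta_n^{i+1}}{1-\delta_n^{i+1}}\right)^2$ requires comparing this one-sided image with the symmetric pair $\left([x_{(i+2)q_n},x_{(i+2)q_n}'],[x_{(i+1)q_n},x_{(i+1)q_n}']\right)$, which is exactly the step the paper singles out by listing the specific interval pairs (and by invoking $H_1^n$ and $T^{n-2}$ for the last row and the $a_{n+1}=1$ case). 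As written, your argument asserts the conclusion of this comparison without setting it up, so the proposal leaves the same computation to the reader that the paper delegates to \cite[Lemma~4.5]{LM}, while adding the two inaccuracies above.
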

\begin{proof}
	($a_{n+1}>1$) To prove the first inequality one has to consider the monotone map
	$$f^{q_n-1}:\left([x_1,x_{1+q_n}],[x_1,x_{1+a_{n+2}q_{n+1}}]\right)\to\left([x_{2q_n},x_{2q_n}'],T^n\right),$$
	and to literally repeat computations of \cite[Lemma~4.5]{LM}
	
	Analogously, for every $i=1,\dots,a_{n+1}-2$, deal with the map
	$$f^{q_n-1}:\left([x_1,x_{1+(i+1)q_n}],[x_1,x_{1+iq_n}]\right)\to\left([x_{(i+2)q_n},x_{(i+2)q_n}'],[x_{(i+1)q_n},x_{(i+1)q_n}']\right).$$
	
	The last inequality corresponds to the map
	$$f^{q_n-1}:\left(H_1^n,[x_1,x_{1+(a_{n+1}-1)q_n}]\right)\to\left(T^{n-2},[x_{a_n q_n},x_{a_n q_n}']\right).$$
	One only needs to note that $f^{-1}(H_1^n)\subset [x_{a_{n+1}q_n},x_{a_{n+1}q_n}']$ and repeat the same computation.
	
	($a_{n+1}=1$) In this case one needs to consider
	$$f^{q_n-1}:\left(H_1^n,[x_1,x_{1+a_{n+2}q_{n+1}}]\right)\to\left(T^{n-2},[x_{q_n},x_{q_n}']\right),$$
	and note that $f^{-1}(H_1^n)\subset T^n$.
\end{proof}

\begin{lmm}
	\label{lmm:a_priori_bounds_deltas}
	Let $f$ be a $\theta$-recurrent map. If the continued fraction of $\theta$ has bounded denominators, then there exists a finite sequence of constants $\{C_i\}, C_i<1$, depending only on $f$, such that $\delta_n^i<C_i$ for every pair $i,n$ such that $i\leq a_{n+1}$.
\end{lmm}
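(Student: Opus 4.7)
The plan is to combine Lemma~\ref{lmm:a_priori_1} (the ``product is small'') with the chain of inequalities in Lemma~\ref{lmm:a_priori_2} so as to bootstrap a uniform upper bound strictly less than $1$ for every $\delta_n^i$. First, since $\theta$ is of bounded type, fix $A$ with $a_n \leq A$ for all $n$; Lemma~\ref{lmm:multiplicity} then gives $\mult\mathbf{H}^n \leq C_1 A$, hence $\absv{\mathbf{H}^n} \leq 2 C_1 A$, and so the error term $O(a_n \absv{\mathbf{H}^n})$ on the right of each inequality in Lemma~\ref{lmm:a_priori_2} is bounded by a single constant $D = D(f, A)$.

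The essential preparatory step is to show $\delta_n^{a_{n+1}}\lambda_{n-1} < C$ uniformly in $n$, where $C$ is the constant from Lemma~\ref{lmm:a_priori_1}. This is verified by applying Lemma~\ref{lmm:a_priori_1} one level lower: if $a_n = 1$ then $\lambda_{n-1} = \delta_{n-1}^1$ and the case $a_n = 1$ of Lemma~\ref{lmm:a_priori_1} gives directly $\delta_n^{a_{n+1}}\delta_{n-1}^1 < C$; if $a_n > 1$, the factorisation $\lambda_{n-1} = \prod_{i=1}^{a_n}\delta_{n-1}^i$ contains $\delta_{n-1}^{a_n-1}\delta_{n-1}^{a_n} < C$ together with further factors in $(0,1)$, so $\lambda_{n-1} < C$, and the claim follows since $\delta_n^{a_{n+1}} \leq 1$.

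With this in hand, the last (or, for $a_{n+1}=1$, only) inequality of Lemma~\ref{lmm:a_priori_2} yields $\delta_n^{a_{n+1}-1} \leq \beta_1$ (respectively $\delta_{n+1}^{a_{n+2}} \leq \beta_1$), where $\beta_1 := \sqrt{1 - 1/K_1}$ with $K_1 := ((1+C)/(1-C))^2 + D$. Iterating the earlier inequalities of Lemma~\ref{lmm:a_priori_2} from bottom to top gives a recursion $\beta_{k+1} := \sqrt{1 - 1/K_{k+1}}$, $K_{k+1} := ((1+\beta_k)/(1-\beta_k))^2 + D$, producing $\delta_n^{a_{n+1}-k} \leq \beta_k$ for $k = 1, \dots, a_{n+1}-1$ and $\delta_{n+1}^{a_{n+2}} \leq \beta_{a_{n+1}}$. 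Since $a_{n+1} \leq A$ the chain closes after at most $A$ steps, and each $\beta_k$, $k \leq A$, is strictly less than $1$ by construction. To bound the ``top'' ratio $\delta_n^{a_{n+1}}$ itself, I apply the same argument at level $n-1$: it is precisely the quantity $\delta_{(n-1)+1}^{a_{(n-1)+2}}$ appearing on the left of the first inequality in the chain at level $n-1$, and is therefore bounded by $\beta_{a_n} \leq \beta_A$. One may then take $C_i := \beta_A$ for every $i \leq A$, or refine so that each $C_i$ reflects only the chain-depth actually needed.

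I expect the main obstacle to be the clean bookkeeping of the case analysis $a_n,a_{n+1} \in \{1\}$ vs.\ $> 1$ and the uniform handling of both ends of the chain; the unifying inequality $\delta_n^{a_{n+1}}\lambda_{n-1} < C$ is the single step that closes everything, since it funnels both cases of Lemma~\ref{lmm:a_priori_1} into the same initial input for the recursion. The bounded-type hypothesis enters only through keeping the chain length finite and controlling the Schwarzian-type error terms in Lemma~\ref{lmm:a_priori_2}; without it either of these could blow up, and it is here (rather than in any genuinely new estimate) that the restriction to bounded $a_n$ is genuinely used.
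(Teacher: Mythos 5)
Your proof is correct and rests on the same two ingredients the paper uses (Lemma~\ref{lmm:a_priori_1} to close the chain and Lemma~\ref{lmm:a_priori_2} to propagate bounds along it), but the route is worth contrasting: the paper's argument is a two-line compactness/contradiction sketch (``if some $\delta_n^i$ could be arbitrarily close to $1$, the chain of inequalities would force $\delta_n^{a_{n+1}-1}\delta_n^{a_{n+1}}$ close to $1$, contradicting Lemma~\ref{lmm:a_priori_1}''), while you run the same chain constructively, producing an explicit recursion $\beta_{k+1}=\sqrt{1-1/K_{k+1}}$, $K_{k+1}=\left(\frac{1+\beta_k}{1-\beta_k}\right)^2+D$, that terminates after at most $A=\sup a_n$ steps. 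The constructive route buys you explicit constants $C_i=\beta_A$ rather than a pure existence statement, and it also forces you to spell out the step the paper leaves implicit: that the $O(a_n\absv{\mathbf{H}^n})$ error in Lemma~\ref{lmm:a_priori_2} is uniformly bounded, which you correctly derive from $\mult\mathbf{H}^n\leq C_1 a_n\leq C_1A$ together with $\absv{\mathbf{H}^n}\leq 2\,\mult\mathbf{H}^n$. Your seeding step --- showing $\delta_n^{a_{n+1}}\lambda_{n-1}<C$ by applying Lemma~\ref{lmm:a_priori_1} at level $n-1$ in both the $a_n=1$ and $a_n>1$ cases --- is exactly what makes the recursion start, and your observation that $\delta_n^{a_{n+1}}$ itself is the left-hand quantity of the first inequality at level $n-1$ cleanly handles the one index the level-$n$ chain does not touch. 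The argument is complete.
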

\begin{proof}
	From lemma~\ref{lmm:a_priori_2} one can see that if $\delta_n^i$ with fixed $i$ can be arbitrarily close to $1$, then so is $\delta_n^{i+1}$ and analogously with the pair $\delta_{n+1}^{a_{n+2}},\delta_n^1$. This contradicts to lemma~\ref{lmm:a_priori_1}.
\end{proof}

Next, we derive estimates for measures of $M^n$ and $\mathbf{H}^n$ which will prove useful in the next section.

\begin{lmm}
	\label{lmm:area_bounds}
	If $\theta$ has bounded denominators, then there exist constants $C>0$ and $0<p<1$ such that for all $n$,
	$$\absv{M^n}<Cp^n \text{ and  } \absv{\mathbf{H}^n}<Cp^n.$$
\end{lmm}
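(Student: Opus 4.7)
The plan, following the strategy of \cite{LM}, is to derive the geometric decay of $\absv{M^n}$ first, and then deduce the same for $\absv{\mathbf{H}^n}$ by a chain-comparison argument.

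\textbf{Step 1 (decay of $\absv{M^n}$).} The decomposition formulas stated just before Lemma~\ref{lmm:multiplicity} show that $M^{n+1}\subset M^n$ and every component of $M^n$ partitions into components of $M^{n+1}$ separated by open gaps. The goal is a uniform ratio $\rho<1$ with $\absv{M^{n+1}}\leq\rho\absv{M^n}$. By Lemma~\ref{lmm:monotonicity_of_G} each component $I$ of $M^n$ is dynamically conjugate---together with its two $M^n$-neighbours inside $F(I)$---to $I_0^n$ or $J_{q_{n-1}}^n$ via a monotone iterate $f^k$; Lemma~\ref{lmm:multiplicity} combined with the boundedness of $\{a_n\}$ gives bounded multiplicity, so the Koebe principle yields bounded distortion. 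Thus it suffices to verify the proportion bound $\absv{M^{n+1}\cap I_0^n}/\absv{I_0^n}<\rho$ at the two ``source'' components.

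\textbf{Step 2 (computation at the sources).} For $a_{n+1}>1$, the component $I_0^n=[x_{a_{n+1}q_n},x_{q_n}]$ has length $d_n^1+d_n^{a_{n+1}}$, and the $M^{n+1}$-subcomponents inside form small neighbourhoods of the points $\{0,x_{q_n},x_{2q_n},\dots,x_{a_{n+1}q_n}\}$, each of length at most a fixed multiple of $d_{n+1}^{a_{n+2}}$. The gaps between consecutive neighbourhoods have length at least $(1-\delta_n^i)d_n^{i+1}$. By Lemma~\ref{lmm:a_priori_bounds_deltas} and the boundedness of $\{a_n\}$ one has $\delta_n^i\leq C<1$ uniformly, so each gap is a uniformly positive fraction of $\absv{I_0^n}$. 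The parallel calculation covers the case $a_{n+1}=1$ and the source $J_{q_{n-1}}^n$. Combining these yields the desired $\rho<1$, whence $\absv{M^n}\leq Cp^n$ by iteration.

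\textbf{Step 3 (bound for $\absv{\mathbf{H}^n}$).} The chain $\mathbf{H}^n$ terminates at $H_{q_n}^n=T^{n-2}$ of length $2d_{n-2}^1$ and has bounded multiplicity by Lemma~\ref{lmm:multiplicity}. Since $\absv{T^{n-2}}$ is comparable to the scale of $I_0^{n-3}$ (covered by a bounded union of components of $M^{n-3}$), pulling the corresponding chain back along $\{x_i\}_{i=1}^{q_n}$ with Koebe distortion shows that each $H_i^n$ is contained, up to a bounded length factor, in the $M^{n-3}$-component through $x_i$. Summing with bounded overlap yields $\absv{\mathbf{H}^n}\leq C\absv{M^{n-3}}\leq C'p^n$ after absorbing the shift into the constants.

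The main obstacle lies in Step~2, where one must verify that the central component $I_0^{n+1}$---defined as the convex hull of four specific critical-orbit points whose arrangement depends on $a_{n+2}$ and $a_{n+3}$---is indeed small compared to the gaps $(1-\delta_n^i)d_n^{i+1}$. Together with the variant definitions of $J_k^{n+1}$ (depending on whether $a_{n+2}=1$ or $>1$), this becomes a finite case analysis, rendered tractable by the boundedness of $\{a_n\}$.
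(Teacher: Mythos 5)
Your overall architecture (establish a proportion bound at the ``source'' components, transfer it via Koebe distortion, then handle $\mathbf{H}^n$ by comparison) matches the paper's, but two of your three steps contain genuine gaps.

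In Step~2 you assert that each $M^{n+1}$-subcomponent of $I_0^n$ has length at most a fixed multiple of $d_{n+1}^{a_{n+2}}$. This holds for the central piece $I_0^{n+1}$, but the pieces $J^{n+1}_{iq_n}$ around the points $x_{iq_n}$ ($0<i\leq a_{n+1}$) are not controlled in this way: $J^{n+1}_{iq_n}$ stretches from $x_{iq_n}$ to $x_{iq_n+(a_{n+2}-1)q_{n+1}}$ (or $x_{iq_n+a_{n+3}q_{n+2}}$), and bounding its length requires a distortion argument, not a direct comparison with $d_{n+1}^{a_{n+2}}$. The paper instead isolates one specific gap $L^n$ (between the return interval $[x_{(a_n-1)q_{n-1}},x_{(a_n-1)q_{n-1}}']$ and $J^n_{a_nq_{n-1}}$), observes that $f^{q_{n-2}}$ carries $L^n\cup J^n_{a_nq_{n-1}}$ homeomorphically into $T^{n-2}$, and pulls back the Poincar\'e length $[f^{q_{n-2}}J^n_{a_nq_{n-1}}:T^{n-1}]$, which is bounded by Lemma~\ref{lmm:a_priori_bounds_deltas}, $q_{n-2}$ times by the Schwarz lemma. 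This is the step that actually pins down a uniformly positive gap; without it your proportion bound at $I_0^n$ is unproved.

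Step~3 is where the argument breaks more seriously. You claim that each $H_i^n$ is contained, up to a bounded factor, in the $M^{n-3}$-component through $x_i$. But there is no such containment: $\mathbf{H}^n$ is the pull-back of $T^{n-2}$ along $q_n$ iterates of the critical orbit, while the $M^{n-3}$-components $I_k^{n-3},J_k^{n-3}$ are defined by pull-backs of length at most $q_{n-3}$, with indices only up to $q_{n-3}-1$. These are different (much shorter) pull-back chains, and bounded distortion of the long chain does not place $H_i^n$ inside the short-chain interval around $x_i$. The paper avoids this by proving a recursive inequality instead: it shows the convex hull $R^{n+1}$ of $H_1^{n+1},H_{1+q_n}^{n+1},\dots,H_{1+(a_{n+1}-1)q_n}^{n+1}$ is contained in $H_1^n$, that $\absv{f^kR^{n+1}}/\absv{H^n_k}$ is uniformly bounded away from $1$ (using the bounded Poincar\'e length $\left[[x_{a_nq_{n-1}},x_{a_nq_{n-1}}']:T^{n-2}\right]$ and the Schwarz lemma), and that the remaining intervals $H^{n+1}_{1+a_{n+1}q_n},\dots$ lie inside $M^{n-1}\cup M^{n-2}$. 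This yields $\absv{\mathbf{H}^{n+1}}<p\absv{\mathbf{H}^n}+\absv{M^{n-1}}+\absv{M^{n-2}}$, and the geometric decay of $\absv{\mathbf{H}^n}$ follows from that of $\absv{M^n}$. You will need some version of this recursion; the direct comparison as stated does not go through.
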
 
\begin{proof}
	
	As earlier in this section, the discussion is analogous to the one of \cite[Lemma~4.8]{LM}. 
	
	First, we want to show that the values $\absv{M^n\cap I^{n-1}_0}/\absv{I^{n-1}_0}$ are bounded from $1$.
	
	Consider the case $a_n>1$. Let $L^n$ be the gap between $[x_{(a_n-1)q_{n-1}},x_{(a_n-1)q_{n-1}}']$ and $J_{a_n q_{n-1}}^n$. Due to Lemma~\ref{lmm:a_priori_bounds_deltas} it is enough to show that its length cannot be arbitrarily small compared to $\absv{J_{a_n q_{n-1}}^n}$. Note that the restriction $f^{q_{n-2}}|_{L^n\cup J_{a_n q_{n-1}}^n}$ is a homeomorphism and $f^{q_{n-2}}(L^n\cup J_{a_n q_{n-1}}^n)\subset T^{n-2}$. Also, depending on whether $a_{n+1}$ is either bigger or equal to one, we get $f^{q_{n-2}}J_{a_n q_{n-1}}^n$ equal to either $[x_{a_{n+1}q_n},x_{q_n}]$ or $[x_{q_{n+2}},x_{q_n}]$. Let $U^n:=T^{n-1} \cup f^{q_{n-2}}(L^n\cup J_{a_n q_{n-1}}^n)$. From Lemma~\ref{lmm:a_priori_bounds_deltas} we see that the Poincar\'e length $[f^{q_{n-2}}J_{a_n q_{n-1}}^n:T^{n-1}]$ is bounded from above. Pulling back these two intervals $q_{n-2}$ times homeomorphically along with $J_k^n$ and using Schwarz lemma we get that $L^n$ cannot be too small with respect to $J_{a_n q_{n-1}}^n$.
	
	If $a_n=1$, denote by $L^n$ the gap between $T^n$ and $J_{q_{n-1}}^n$ and repeat the same considerations.
	
	Now, the bounds for $M^n$ are proved as follows. Due to Lemma~\ref{lmm:a_priori_bounds_deltas} and Koebe principle all maps $f^{q_{n-1}-k}:I_k^n\to [x_{q_{n-1}},x_{q_{n+1}}]\subset T^{n-3},0<k<q_{n-1}$ and $f^{q_n-k}:J_k^n\to J_{q_n}^n\subset T^{n-2},q_{n-1}\leq k<q_n$ have uniformly bounded distortion on its domain of definition. Thus, $\absv{M^{n+1}}/\absv{M^n}$ is bounded from $1$ which proves the statement for $M^n$.
	
	Finally, we derive the bounds for $\mathbf{H}^n$. Denote by $R=R^{n+1}$ the convex hull of intervals $H_1^{n+1},H_{1+q_n}^{n+1},\dots,H_{1+(a_{n+1}-1)q_n}^{n+1}$. 
	Clearly, $f^{q_n}$ is monotonic on $R$. Moreover, $R\subset H_1^n$. Indeed, $f^{q_n}H_1^n=T^{n-2}$ and $f^{q_n}R\subset [x_{a_nq_{n-1}},x_{a_nq_{n-1}}']$ because $f^{q_{n-1}-1}|_{[x_{1+lq_n},x_{1+a_nq_{n-1}}]},0<l\leq a_{n+1}$ is not monotonic. Since the quantity $\left[[x_{a_nq_{n-1}},x_{a_nq_{n-1}}']:T^{n-2}\right]$ is bounded from $1$, from Schwarz lemma we get that $\absv{f^k R}/\absv{H^n_k}$ is uniformly bounded from $1$. Further, since $H_{1+a_{n+1}q_n}^{n+1}\subset I^{n-1}_1$, it is enough to estimate the measure of $f^k I^{n-1}_0, 0<k<q_{n-1}$. From combinatorics we have $f^{i+lq_{n-2}}I^{n-1}_0\subset I^{n-1}_i,0\leq l<a_n,0<i<q_{n-2}$ and $f^{lq_{n-2}}I^{n-1}_0\subset I^{n-2}_0,0<l\leq a_n$.
	Hence, $\cup_{k=1}^{q_{n-1}}f^k I_0^{n-1}\subset M^{n-1}\cup M^{n-2}$ and $\absv{\mathbf{H}^{n+1}}<p\absv{\mathbf{H}^n}+\absv{M^{n-1}}+\absv{M^{n-2}}$. The claim for $\mathbf{H}^n$ follows.	
\end{proof}

\section{Asymptotics and Hausdorff dimension}
\label{sec:Hausdorff}

We are now ready to prove an asymptotic formula connecting different $\delta_n^i$, estimate their magnitudes and prove that the Hausdorff dimension of the post-critical set is equal to $0$ (Compare with \cite[Section~5]{LM}).

We begin with estimates on derivatives. Recall that $f$ is quadratic near the origin.

\begin{lmm}
	\label{lmm:derivatives}
	Let $f$ be $\theta$-recurrent for some $\theta$ with bounded denominators. The following estimates take place.
	
	For every $x\in[x_1,x_{1+a_{n+2}q_{n+1}}]$,
	$$\absv{(f^{q_n-1})'(x)}=\frac{d_n^1}{(d_{n+1}^{a_{n+2}})^2}\left(1+O(\lambda_{n+1}+\lambda_{n-1}+q^n)\right).$$
	
	If $a_{n+1}>1$, then for $0<i<a_{n+1}$ and $x\in [x_1,x_{1+iq_n}]$,
	$$\absv{(f^{q_n-1})'(x)}=\frac{d_n^{i+1}}{(d_n^i)^2}\left(1+O(\delta_n^i+\lambda_{n-1}+q^n)\right).$$
	The constants $q<1$ and $O(.)$ do not depend on $n$.
	
\end{lmm}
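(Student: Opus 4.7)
The argument parallels \cite[Lemma~5.1]{LM}: the first iterate is handled exactly using that $f(y)=y^2-c$ near $0$, while the remaining $q_n-2$ iterates form a monotone diffeomorphism to which the Koebe principle applies.

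The first step is to write down both the domain and image lengths exactly. Since $f(y)=y^2-c$ near zero, the domain $[x_1,x_{1+a_{n+2}q_{n+1}}]=f([0,x_{a_{n+2}q_{n+1}}])$ has length $(d_{n+1}^{a_{n+2}})^2$ and $[x_1,x_{1+iq_n}]=f([0,x_{iq_n}])$ has length $(d_n^i)^2$. Under $f^{q_n-1}$ these intervals map onto $[x_{q_n},x_{q_{n+2}}]$ and $[x_{q_n},x_{(i+1)q_n}]$ respectively. Theorem~\ref{thm:signs}(1) shows that $x_{q_n}$ and $x_{q_{n+2}}$ have opposite signs, since $n$ and $n+2$ cannot both lie in $\{0,1\}$ modulo $4$; and Theorem~\ref{thm:signs}(2) shows that $x_{q_n}$ and $x_{(i+1)q_n}$ have opposite signs whenever $i+1>1$. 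Hence the image lengths equal $d_n^1+d_{n+2}^1=d_n^1\bigl(1+O(\lambda_{n+1})\bigr)$ and $d_n^1+d_n^{i+1}=d_n^{i+1}\bigl(1+O(\delta_n^i)\bigr)$, where in the last step we used $d_n^1/d_n^{i+1}=\delta_n^1\cdots\delta_n^i\leq\delta_n^i$ (all factors are below $1$ by Lemma~\ref{lmm:a_priori_bounds_deltas}).

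Next comes the distortion estimate. Our intervals lie inside $J_1^n\subset H_1^n$, and $f^{q_n-1}$ is a monotone diffeomorphism of $H_1^n$ onto $T^{n-2}$. The a priori bounds of Section~\ref{sec:a_priori} provide all the ingredients for Koebe: Lemma~\ref{lmm:multiplicity} bounds $\mult\mathbf{H}^n$, Lemma~\ref{lmm:a_priori_bounds_deltas} bounds the intermediate $\delta$-ratios uniformly away from $1$, and Lemma~\ref{lmm:area_bounds} gives $|\mathbf{H}^n|=O(q^n)$. Feeding these into the $C^2$ Koebe principle from the Appendix yields
$$\sup_{x,y}\left|\log\frac{(f^{q_n-1})'(x)}{(f^{q_n-1})'(y)}\right|=O(\lambda_{n-1}+q^n),$$
where the $\lambda_{n-1}$ contribution reflects the amount of Koebe space available inside $H_1^n$ around the domain (inherited from $T^{n-1}\subset H_1^n$), and the $q^n$ contribution absorbs the summed $C^2$ distortion along the chain $\mathbf{H}^n$.

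Combining the mean value theorem with this bounded-distortion estimate gives
$$|(f^{q_n-1})'(x)|=\frac{|\mathrm{image}|}{|\mathrm{domain}|}\bigl(1+O(\lambda_{n-1}+q^n)\bigr),$$
and substituting the four lengths from the first step, then absorbing the image-length corrections $O(\lambda_{n+1})$ and $O(\delta_n^i)$ into the global error, produces both stated formulas. The main obstacle is deriving the Koebe estimate in precisely the form $O(\lambda_{n-1}+q^n)$: one must correctly identify $H_1^n$ as the Koebe neighbourhood (which forces the $\lambda_{n-1}$ term via $T^{n-1}\subset H_1^n$) and verify that the total $C^2$ distortion accumulated along the chain is dominated linearly by $|\mathbf{H}^n|$, extending the Fibonacci-specific calculation of \cite[Section~5]{LM} to the more intricate combinatorics of general admissible $\theta$.
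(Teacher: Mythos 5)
Your proof follows essentially the same route as the paper's: compute the image and domain lengths exactly, get the value of the derivative at one point by the mean value theorem, and control variation across the interval by the $C^2$ Koebe principle applied through the monotone chain $\mathbf{H}^n$. Most of the steps check out: the identification of the image intervals $[x_{q_n},x_{q_{n+2}}]$ and $[x_{q_n},x_{(i+1)q_n}]$, the opposite-sign observations from Theorem~\ref{thm:signs}, the exact length $(d_{n+1}^{a_{n+2}})^2$ (resp.\ $(d_n^i)^2$) via the quadratic behaviour at $0$, and the algebraic simplifications $d_{n+2}^1/d_n^1=\lambda_{n+2}\lambda_{n+1}=O(\lambda_{n+1})$ and $d_n^1/d_n^{i+1}=\delta_n^1\cdots\delta_n^i=O(\delta_n^i)$.

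The one genuine error is in your justification of the $\lambda_{n-1}$ term in the distortion bound: you write that the Koebe space is ``inherited from $T^{n-1}\subset H_1^n$''. That inclusion is false: $T^{n-1}$ is an interval around $0$, while $H_1^n$ is a neighbourhood of $x_1$ that cannot contain $0$ (if it did, the chain $\mathbf{H}^n$ would not be monotone, contradicting the lemma preceding Lemma~\ref{lmm:monotonicity_of_G}). The correct source of the $\lambda_{n-1}$ factor, as in the paper, is to apply the Koebe principle to $f^{-(q_n-1)}\colon T^{n-2}\to H_1^n$ and measure the Poincar\'e length on the \emph{image} side: one has $[x_{q_n},x_{q_{n+2}}]\subset T^n$ (resp.\ $[x_{q_n},x_{(i+1)q_n}]\subset T^n$), and $[T^n:T^{n-2}]=O(\lambda_n\lambda_{n-1})=O(\lambda_{n-1})$; the $O(q^n)$ term absorbs the accumulated $C^2$ non-linearity via $|\mathbf{H}^n|=O(q^n)$ from Lemma~\ref{lmm:area_bounds}. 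With that correction to the justification, your argument matches the paper's.
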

\begin{proof}
	
	Let $x\in[x_1,x_{1+a_{n+2}q_{n+1}}]$. Consider the homeomorphism
	$$f^{q_n-1}:\left(H_1^n,[x_1,x_{1+a_{n+2}q_{n+1}}]\right)\to \left(T^{n-2},[x_{q_n},x_{q_{n+2}}]\right).$$
	From Koebe principle (for $f^{-1}$) and bounds in Lemma~\ref{lmm:area_bounds} for any $\xi\in[x,x_{1+a_{n+2}q_{n+1}}]$ we have
	$$\frac{(f^{q_n-1})'(x)}{(f^{q_n-1})'(\xi)}=1+O\left(\left[T^n:T^{n-2}\right]+q^n\right)=1+O\left(\lambda_{n-1}+q^n\right),$$
	where the constants do not depend on $n$. By Mean Value Theorem, there exists $\xi$ such that
	$$\absv{(f^{q_n-1})'(\xi)}=\frac{d_n^1+d_{n+2}^1}{(d_{n+1}^{a_{n+2}})^2}=\frac{d_n^1}{(d_{n+1}^{a_{n+2}})^2}\left(1+\lambda_{n+2}\lambda_{n+1}\right).$$
	Thus, we obtain the formula
	$$\absv{(f^{q_n-1})'(x)}=\frac{d_n^1}{(d_{n+1}^{a_{n+2}})^2}\left(1+O(\lambda_{n+1}+\lambda_{n-1}+q^n)\right).$$
	
	Next, if $a_{n+1}>1$, for $0<i<a_{n+1}$ consider the homeomorphism
	$$f^{q_n-1}:\left(H_1^n,[x_1,x_{1+iq_n}]\right)\to \left(T^{n-2},[x_{q_n},x_{(i+1)q_n}]\right).$$
	Repeating the argument above we get
	$$\frac{(f^{q_n-1})'(x)}{(f^{q_n-1})'(\xi)}=1+O\left(\left[[x_{q_n},x_{(i+1)q_n}]:T^{n-2}\right]+q^n\right)=1+O\left(\lambda_{n-1}+q^n\right)$$
	and
	$$\absv{(f^{q_n-1})'(\xi)}=\frac{d_n^1+d_n^{i+1}}{(d_n^i)^2}=\frac{d_n^{i+1}}{(d_n^i)^2}\left(1+\delta_n^1\delta_n^2\cdots\delta_n^i\right).$$
	Hence,
	$$\absv{(f^{q_n-1})'(x)}=\frac{d_n^{i+1}}{(d_n^i)^2}\left(1+O(\delta_n^i+\lambda_{n-1}+q^n)\right).$$
\end{proof}

Now, we can describe the asymptotic behaviour of $\delta_n^i$. To simplify the notation denote $\alpha_n:=\delta_{n+1}^{a_{n+2}}$.

\begin{lmm}[Asymptotics]
	\label{lmm:asymptotics}
	Assume that $\theta$ has bounded denominators. There exist constants $\epsilon,N>0$ depending only on $f$ (but not on $\theta$) such that the following statement holds.
	
	If for some $n>N$ any $\delta_n^i<\epsilon$, then the sequence $\{\delta_n^{a_{n+1}}\}$ decreases at least with the exponential speed and the following asymptotic formulas hold as $n\to\infty$.
	
	$$\left(\delta_{n+2}^{a_{n+3}}\right)^{2^{a_{n+2}}}\sim\left(\delta_{n+1}^{a_{n+2}}\right)^{2^{a_{n+1}}-1}\delta_n^{a_{n+1}},$$
	and for $0<i<a_{n+1}$,
	$$\delta_n^i\sim\left(\delta_{n+1}^{a_{n+2}}\right)^{2^i}.$$	
\end{lmm}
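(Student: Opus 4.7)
The proof rests on the quadratic normal form $f(x)=x^2+x_1$ near $0$, the derivative estimates of Lemma~\ref{lmm:derivatives}, and the a priori bounds of Section~\ref{sec:a_priori} (which apply under the bounded-denominators hypothesis). Near the origin we have $f^{q_n}(x)=f^{q_n-1}(x^2+x_1)$, so for $1\le i<a_{n+1}$ the Mean Value Theorem applied on $[x_1,x_{1+iq_n}]$ gives
\[ x_{(i+1)q_n}-x_{q_n} \;=\; (f^{q_n-1})'(\xi)\cdot (d_n^i)^2, \qquad \xi\in[x_1,x_{1+iq_n}]. \]
By Theorem~\ref{thm:signs} the values $x_{q_n}$ and $x_{(i+1)q_n}$ have opposite signs, while the first formula of Lemma~\ref{lmm:derivatives} yields $|(f^{q_n-1})'(\xi)|\sim d_n^1/(d_{n+1}^{a_{n+2}})^2 = 1/(\alpha_n^2 d_n^1)$ uniformly on the relevant Koebe interval. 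Combining gives the quadratic recursion $d_n^{i+1}\sim (d_n^i)^2/(\alpha_n^2 d_n^1) - d_n^1$, which by induction on $i$ starting from $d_n^1$ produces $d_n^i \sim d_n^1\,\alpha_n^{-(2^i-2)}$, equivalently the second asymptotic $\delta_n^i\sim\alpha_n^{2^i}$.

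For the cross-scale formula, I apply the within-scale argument at levels $n$ and $n+1$. Using the identifications $\alpha_{n-1}=\delta_n^{a_{n+1}}=d_n^{a_{n+1}}/d_{n-1}^1$ and $\alpha_n=\delta_{n+1}^{a_{n+2}}=d_{n+1}^{a_{n+2}}/d_n^1$, this yields the two relations
\[ \lambda_n \sim \alpha_{n-1}\,\alpha_n^{2^{a_{n+1}}-2}, \qquad \lambda_{n+1}\sim \alpha_n\,\alpha_{n+1}^{2^{a_{n+2}}-2}. \]
An additional constraint linking $\alpha_{n-1},\alpha_n,\alpha_{n+1}$ is obtained by comparing $\mu_n:=(f^{q_n-1})'(x_1)$ with $\mu_{n-1}$ via the chain rule $\mu_n=\mu_{n-1}\cdot\prod_{k=q_{n-1}}^{q_n-1}f'(x_k)$, whose critical factor $f'(x_{q_{n-1}})=\pm 2 x_{q_{n-1}}=\pm 2 d_{n-1}^1$ carries the cross-scale information and whose remaining factors are controlled by Lemma~\ref{lmm:derivatives} at both scales. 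Matching this expression against the explicit forms $|\mu_n|\sim 1/(\alpha_n^2 d_n^1)$ and $|\mu_{n-1}|\sim 1/(\alpha_{n-1}^2 d_{n-1}^1)$, and substituting using the within-scale formula, yields after algebra
\[ (\alpha_{n+1})^{2^{a_{n+2}}} \sim (\alpha_n)^{2^{a_{n+1}}-1}\,\delta_n^{a_{n+1}}. \]

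Exponential decrease now follows by bootstrapping. If $\delta_n^i<\epsilon$ for some $i\le a_{n+1}$ and $n>N$, the within-scale formula forces $\alpha_n\lesssim \epsilon^{1/2^i}$, so $\alpha_n$ is small. Iterating the cross-scale recursion under the bounded-denominator assumption $a_n\le K$ bounds $\log\alpha_{m+1}$ by a positive linear combination of $\log\alpha_m$ and $\log\alpha_{m-1}$ with bounded coefficients, which produces geometric decay of $\alpha_m$ and hence of $\delta_m^{a_{m+1}}=\alpha_{m-1}$.

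The main technical obstacle is the cross-scale step: the two expressions for $\mu_n$ (explicit via Lemma~\ref{lmm:derivatives}, compositional via the chain rule) must agree with the precision $1+O(p^n)$ required by the $\sim$ convention. This necessitates that every error term appearing along the way --- the Koebe corrections $\lambda_{n\pm 1}$ and $q^n$ in Lemma~\ref{lmm:derivatives}, and the measures $|M^n|,|\mathbf{H}^n|$ underlying those corrections --- be exponentially small in $n$, which is precisely what the bounded-type assumption and Lemma~\ref{lmm:area_bounds} from Section~\ref{sec:a_priori} deliver.
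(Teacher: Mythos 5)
The within-scale and cross-scale derivations in your proposal follow the same route as the paper: both rest on the derivative estimates of Lemma~\ref{lmm:derivatives}, and the cross-scale formula is obtained from the chain-rule decomposition of $(f^{q_{n+1}-1})'(x_1)$ (your version with $\mu_n/\mu_{n-1}$ is just the paper's decomposition shifted by one index). The only stylistic difference is that you recover the quadratic recursion $d_n^{i+1}+d_n^1\approx (d_n^i)^2/(\alpha_n^2 d_n^1)$ directly from MVT on $f^{q_n-1}$, while the paper equates the two alternative derivative formulas in Lemma~\ref{lmm:derivatives}; this is cosmetic. One small imprecision: the ``critical factor'' $f'(x_{q_{n-1}})$ is not unique---the decomposition of $\mu_n/\mu_{n-1}$ across scale $n-1$ produces $a_n$ such factors $f'(x_{jq_{n-1}})=\pm 2d_{n-1}^j$, $j=1,\dots,a_n$, all of which carry cross-scale information.

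The genuine gap is the exponential-decrease step. You assert that ``iterating the cross-scale recursion \dots bounds $\log\alpha_{m+1}$ by a positive linear combination of $\log\alpha_m,\log\alpha_{m-1}$ with bounded coefficients, which produces geometric decay.'' This is not sufficient, for two reasons. First, setting $L_n=-\log\alpha_n$, the recursion is $L_{n+1}\approx\bigl(a_{n+1}\log 2+(2^{a_{n+1}}-1)L_n+L_{n-1}\bigr)/2^{a_{n+2}}$, whose homogeneous coefficients sum to $2^{a_{n+1}-a_{n+2}}$, which can be $\ll 1$ when $a_{n+2}>a_{n+1}$; a positive linear combination with coefficients summing to less than $1$ does not on its own give linear growth of $L_n$, so one must track the inhomogeneous term across a whole window of indices where the $a_k$ vary---this is precisely the casework on ``which $a_k$ is maximal over a window of length $N(a_{n+1})$'' that occupies most of the paper's proof. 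Second, the ``$\approx$'' in the recursion carries multiplicative errors involving $\lambda_{n\pm 1}$, $|M^n|$, $|\mathbf{H}^n|$; before the decay of $\alpha_m$ has been established, $\lambda_{n\pm1}$ is only bounded away from $1$ (Lemma~\ref{lmm:a_priori_bounds_deltas}), not small, so showing the accumulated error stays controlled requires a bootstrap: show $\alpha_m$ stays small, then show the errors are summable, then upgrade to geometric decay. Your proof treats this as a corollary of the algebra when in fact it is the bulk of the argument; you should either reproduce the paper's window/maximality induction or supply a different complete argument (for instance, tracking the quantity $\Lambda_n:=\alpha_n^{2^{a_{n+1}}}\alpha_{n-1}$, which the cross-scale formula shows satisfies $\Lambda_{n+1}\approx 2^{-a_{n+1}}\Lambda_n$, together with the a priori one-step bound $\alpha_{k+1}=O(\alpha_k^{1/2^{a_{k+2}}}+p^k)$ to rule out the seesaw scenario---but this still requires the bootstrap to justify ``$\approx$'').
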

\begin{proof}
	From estimates in Lemmas~\ref{lmm:a_priori_2},~\ref{lmm:area_bounds} and boundedness of denominators of $\theta$ follows immediately that if some $\delta_n^i$ for a big enough $n$ is small, so are $\delta_m^j$ with $m\in\{n+1,n+2,n+3\}$ and $\delta_n^k$ with $k\leq i$.
	
	If $a_{n+1}>1$, from Lemma~\ref{lmm:derivatives}
	$$\frac{d_n^1}{(d_{n+1}^{a_{n+2}})^2}\approx\frac{d_n^{2}}{(d_n^1)^2}\approx\frac{d_n^{3}}{(d_n^2)^2}\approx\cdots\approx\frac{d_n^{a_{n+1}}}{(d_n^{a_{n+1}-1})^2},$$
	where the sign $\approx$ is understood in equality modulo a factor close to $1$ (of course, at this step it does not have asymptotic form). This is equivalent to 
	$$\left(\frac{d_n^1}{d_{n+1}^{a_{n+2}}}\right)^2\approx\frac{d_n^{2}}{d_n^1}, \left(\frac{d_n^{2}}{d_n^1}\right)^2\approx\frac{d_n^{3}}{d_n^2},\cdots,\left(\frac{d_n^{a_{n+1}-1}}{d_n^{a_{n+1}-2}}\right)^2\approx\frac{d_n^{a_{n+1}}}{d_n^{a_{n+1}-1}},$$
	or
	$$\left(\delta_{n+1}^{a_{n+2}}\right)^2\approx\delta_n^1,\left(\delta_n^1\right)^2\approx\delta_n^2,...,\left(\delta_n^{a_{n+1}-2}\right)^2\approx\delta_n^{a_{n+1}-1}.$$
	
	This turns into
	$$\delta_n^i\approx\left(\delta_{n+1}^{a_{n+2}}\right)^{2^i}$$
	for $0<i<a_{n+1}$.
	
	Thus, $\delta_n^i=\left(\alpha_n\right)^{2^i}$ and $$\lambda_n=\alpha_n^{2^1+2^2+\dots+2^{a_{n+1}-1}}\alpha_{n-1}=\alpha_n^{2^{a_{n+1}}-2}\alpha_{n-1}.$$
	
	Next, we substitute formulas from Lemma~\ref{lmm:asymptotics} into the expression
	$$(f^{q_{n+1}-1})'(x_1)=(f^{q_n-1})'(x_1)\cdot2x_{q_n}\cdot(f^{q_n-1})'(x_{1+q_n})\cdot2x_{2q_n}\times\cdots$$
	$$\times(f^{q_n-1})'(x_{1+(a_{n+1}-1)q_n})\cdot2x_{a_{n+1}q_n}\cdot(f^{q_{n-1}-1})'(x_{1+a_{n+1}q_n})$$
	and obtain
	$$\frac{d_{n+1}^1}{(d_{n+2}^{a_{n+3}})^2}\approx\frac{d_n^1}{(d_{n+1}^{a_{n+2}})^2}\cdot2d_n^1\cdot\frac{d_n^{2}}{(d_n^1)^2}\cdot2d_n^2\times\cdots$$
	$$\times\frac{d_n^{a_{n+1}}}{(d_n^{a_{n+1}-1})^2}\cdot2d_n^{a_{n+1}}\cdot\frac{d_{n-1}^1}{(d_n^{a_{n+1}})^2}$$
	or
	$$(\delta_{n+2}^{a_{n+3}})^2 \lambda_{n+1}\approx\frac{1}{2^{a_{n+1}}}\left(\delta_{n+1}^{a_{n+2}}\right)^2\lambda_n.$$
	
	Substituting formulas for $\alpha_n$'s we see
	\begin{equation}
		\label{eqn:asymptotics}
		\alpha_{n+1}^{2^{a_{n+2}}}\alpha_n\approx\frac{1}{2^{a_{n+1}}}\alpha_n^{2^{a_{n+1}}}\alpha_{n-1},
	\end{equation}
	that is, the values of $\alpha_n^{2^{a_{n+1}}}\alpha_{n-1}$ change by ``scaling''. 
	
	Clearly, this and the above formulas turn into asymptotic formulas if we prove that $\alpha_n$ tend to zero. Thus, to finish proof of the lemma we only have to show convergence with exponential speed.
	
	Let $K$ be equal to the maximum of $\{a_k\}_{k=1}^\infty$. First, assume that $a_{n+1}=K$,$n$ is big and $\alpha_{n-1},\alpha_n$ are small. From formula~\ref{eqn:asymptotics} applied $K$ times follows that $$\min\{\alpha_{n+k},\alpha_{n+k-1}\}<\gamma^k\max\{\alpha_n,\alpha_{n-1}\}$$ for some constant $\gamma<(3/4)^{1/K}$. Note, that from estimates of Lemma~\ref{lmm:a_priori_2} follows that for small $\alpha_k$ with big $k$ holds $\alpha_{k+1}=O(\alpha_k^{1/2^{a_{n+2}}}+p^k)$ with constants $O(.),p$ depending only on $K$. So, unless $\alpha_{n+k}$ is bigger than both $\alpha_{n+k-1},\alpha_{n+k+1}$, we obtain 
	$$\alpha_{n+k}<\gamma^l\max\{\alpha_n,\alpha_{n-1}\}$$
	where $l=k$ or $l=k+1$. Otherwise, which cannot happen twice in a row, we can apply the estimate $\alpha_{n+k}=O(\alpha_{n+k-1}^{1/2^{a_{n+k+1}}}+p^{n+k-1})$, and convergence to $0$ with exponential speed follows.
	
	Next, assume that $a_{n+1}$ is maximal among $a_{n+1},a_{n+2},\dots,a_{n+N}$. By the reasoning a in the paragraph above, if $n$ is big, $\alpha_{n-1},\alpha_n$ are small and $N$ is big enough the finite sequence $\alpha_n,\alpha_{n+1},...,\alpha_{n+N}$ tends to $0$ exponentially fast with the same exponent, and $N=N(a_{n+1})$ depends only on the value $a_{n+1}$. This way are defined $K$ numbers $N(1),N(2),\dots,N(K)$. Choose the estimate for $\alpha_n$ so that if $\alpha$ is small, then $\alpha_{n+K \max N(i)}$ is still small. This is possible due to Lemma~\ref{lmm:a_priori_2}. So, either our sequence exponentially decreases until some $n+k$ with $k>N(a_{n+1}), a_{n+k+1}>a_{n+1}$, or $k\leq N(a_{n+1})$. In both cases we can consider as ``base point'' $\alpha_{n+k}$ (which is small). Repeating this at most $K-1$ times we are in the case $a_n=K$. This finishes the proof of the lemma.
\end{proof}

In the setting of the previous lemma the lengths $d_n^i$ decrease with the superexponential speed. Indeed, from Lemma~\ref{lmm:asymptotics} we have for $0<i<a_{n+1}$ $$d_n^i/d_1^1=(\delta_n^i\delta_n^{i+1}\cdot...\cdot\delta_n^{a_{n+1}})\lambda_{n-1}\lambda_{n-2}\cdot\dots\cdots\lambda_2=$$
$$(\alpha_n^{2^i}\cdot\dots\cdot\alpha_n^{2^{a_{n+1}-1}}\alpha_{n-1})(\alpha_{n-1}^2\cdot\dots\cdot\alpha_{n-1}^{2^{a_n-1}}\alpha_{n-2})\cdot\dots\cdots\alpha_2\alpha_1<$$
$$\left(p^n\right)^{2^{a_{n+1}}-2^i}\left(p^{n-1}\right)^{2^{a_n}-1}\cdot\dots\cdot(p^1)^{2^{a_2}-1}C_1=p^{n(2^{a_{n+1}}-2^i)+(n-1)(2^{a_n}-1)+\dots+(2^{a_2}-1)}C_1$$
for some constant $p<1$. Analogously, if $a_{n+1}=1$,
$$d_n^1/d_1^1=p^{n+(n-1)(2^{a_n}-1)+\dots+(2^{a_2}-1)}C_1.$$
Hence, $d_n^i<C p^{n^2+n} C$.

Define by $\mathcal{F}_0(\theta)$ the class of $\theta$-recurrent unimodal maps such that $\delta_n^1$ attain arbitrarily small values. From Lemma~\ref{lmm:asymptotics} follows that if $\theta$ has bounded denominators and $f\in\mathcal{F}_0(\theta)$, the $d_n^i$ decrease with the superexponential speed and the asymptotics of Lemma~\ref{lmm:asymptotics} take place. This is enough to estimate the Hausdorff dimension of $\overline{\mathcal{O}}$.

\begin{lmm}
	\label{lmm:dim=0}
	If $f\in\mathcal{F}_0(\theta)$, the Hausdorff dimension of $\overline{\mathcal{O}}$ is equal to $0$.
\end{lmm}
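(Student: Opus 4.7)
The plan is to cover $\overline{\mathcal{O}} = \bigcap_n M^n$ by the $q_n$ intervals that constitute $M^n$ and to verify that, for every $\varepsilon > 0$,
$$\sum_{U \subset M^n}|U|^{\varepsilon} \xrightarrow[n\to\infty]{} 0.$$
From this $\mathcal{H}^{\varepsilon}(\overline{\mathcal{O}}) = 0$ for every $\varepsilon > 0$, and hence $\dim_H \overline{\mathcal{O}} = 0$, follows at once. First I would use that $\theta$ is of bounded type: the recursion $q_{n+1} = a_{n+1}q_n + q_{n-1}$ gives $q_n \leq C\mu^n$ for some $\mu>1$ depending only on $\sup_k a_k$. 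Second, I would invoke the superexponential estimate $d_n^i \leq Cp^{n^2+n}$ (for some $0<p<1$) which was derived in the displayed calculation immediately after Lemma~\ref{lmm:asymptotics}; this applies precisely because $f \in \mathcal{F}_0(\theta)$ makes $\delta_n^1$ eventually small enough for Lemma~\ref{lmm:asymptotics} to take effect.

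The heart of the argument is to turn the pointwise bound on $d_n^i$ into a uniform bound $\max_{U \subset M^n}|U| \leq C'q^{n^2}$ for some $0<q<1$. For this I would associate to every $U \in M^n$ a monotone iterate $f^j$ with $j<q_n$, coming from the chains of Lemma~\ref{lmm:monotonicity_of_G}, mapping $U$ diffeomorphically onto the central subinterval $J^n_{q_n}$ around $0$ (of length $\sim d_n^1$) or onto $T^{n-2}$ (of length $\sim d_{n-2}^1$). Koebe's principle, with Koebe space supplied by Lemma~\ref{lmm:area_bounds} together with the multiplicity control of Lemma~\ref{lmm:multiplicity}, yields bounded distortion of $f^j$ on $U$, and the superexponential smallness of the image transfers directly to a superexponential bound on $|U|$. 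The delicate case is when the orbit of $U$ passes close to $0$ at some close-return time $q_m$, $m<n$: then the factor $|f'(x_{q_m})| = 2d_m^1$ in $(f^j)'$ is small and could a priori spoil the bound; but precisely such $U$ are nested in a deeper-level interval of $M^{n'}$ with $n'>n$, and a short induction on $n'-n$ restores the superexponential bound.

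Assuming $\max_{U \subset M^n}|U| \leq C'q^{n^2}$, the combination with the exponential bound on $q_n$ finishes the proof:
$$\sum_{U\subset M^n}|U|^{\varepsilon} \;\leq\; q_n\,(C'q^{n^2})^{\varepsilon} \;\leq\; C''\mu^n q^{\varepsilon n^2} \;\xrightarrow[n\to\infty]{}\; 0$$
for every $\varepsilon>0$, so $\dim_H \overline{\mathcal{O}} = 0$.

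The main technical obstacle will be the uniform Koebe-distortion step, and specifically the handling of those $U \in M^n$ whose iterate $f^j$ traverses close returns $q_m$ with $m<n$, where $|f'(x_{q_m})| = 2d_m^1$ is arbitrarily small. Without care these small-derivative factors could offset the superexponential smallness of $|J^n_{q_n}|$ or $|T^{n-2}|$; the refinement-by-deeper-cover idea outlined above, which exploits the fact that close approaches to $0$ correspond to additional nesting inside the Cantor set rather than to genuine ``loss'' of smallness, is the technical heart of the proof.
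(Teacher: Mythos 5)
Your overall scaffolding is the same as the paper's: cover $\overline{\mathcal{O}}$ by the intervals of $M^n$, use that $q_n\le C\mu^n$ for $\theta$ of bounded type, pair this with a superexponential bound $\max_{U\subset M^n}|U|\le C'p^{n^2}$, and conclude $\sum_{U\subset M^n}|U|^{\varepsilon}\le q_n (C'p^{n^2})^{\varepsilon}\to 0$ for every $\varepsilon>0$. However, the middle step --- the passage from the superexponential smallness of $d_n^i$ to the claimed uniform superexponential bound on $|U|$ --- contains a genuine gap as formulated.

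The issue is that Koebe's principle controls \emph{distortion}, i.e.\ ratios of derivatives and hence ratios of lengths of nested intervals, not the absolute size of a preimage in terms of the absolute size of its image. If $f^j:(G_0,U)\to(T^{n-2},J^n_{q_n})$ is a monotone chain with bounded Poincar\'e length and multiplicity, what Koebe gives you is roughly
$$\frac{|U|}{|G_0|}\asymp\frac{|J^n_{q_n}|}{|T^{n-2}|}\asymp\lambda_n\lambda_{n-1},$$
and since $|G_0|$ is bounded by a constant, this produces only an \emph{exponential} bound $|U|\lesssim \lambda_n\lambda_{n-1}$, not the superexponential bound $p^{n^2}$ you need. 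The superexponential decay of $d_n^1$ itself is obtained by telescoping $d_n^1=d_1^1\prod_{k\le n}\lambda_k$; that telescoping does not survive when you replace the interval at $0$ by an off-centre $U\subset M^n$ and pull back once. Thus ``the superexponential smallness of the image transfers directly'' is the step that fails, and the failure is not confined to the close-return times you flag --- it occurs already in the generic case. Your ``refinement-by-deeper-cover'' remark gestures at the correct idea but does not pin it down.

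What the paper actually proves is the \emph{one-step ratio} estimate: for each $x$ with $M^{n+1}(x)$ outside $I^{n+1}_0$,
$$\frac{|M^{n+1}(x)|}{|M^n(x)|}<p^n,$$
obtained from the ratio estimate $\absv{[0,x_{(a_{n+1}-1)q_n}]}/\absv{I^n_0}<q^n$ near the origin, from a comparison of $f^{q_{n-1}}$-images for the one outlying interval $M^{n+1}(x_{q_n})$, and from the bounded distortion of $f^{q_{n-1}-k}:I^n_k\to T^{n-1}$ and $f^{q_n-k}:J^n_k\to T^n$. Telescoping this in $n$ yields $|M^n(x)|<Cp^{1+2+\dots+(n-1)}=Cp^{n(n-1)/2}$, which \emph{is} superexponential, and then the $\varepsilon$-measure estimate closes the argument exactly as you wrote. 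So your outer shell and final computation are right; the missing idea is the nested ratio telescoping, which is what actually produces the $p^{n^2}$ power.
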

\begin{proof}
	As for the Fibonacci case, it is enough to compute the Hausdorff $\varepsilon$-measure of every $M^n$ for every $\alpha>0$. The approach is the same as in \cite[Lemma~5.5]{LM}: first, show that $\absv{M^{n+1}\cap I^{n}_0}/\absv{I^n_0}$ decreases at least exponentially, and then due to bounds on distortion of $f^{q_n}$ the lengths of corresponding iterates of $M^{n+1}\cap I^{n}_0\subset I_0^n$ decrease with comparable speed. 
	
	Denote by $M^n(x)$ the interval of $M^n$ containing $x$. The set $M^{n+1}(0)=I^{n+1}_0$ and, if defined, all sets $M^{n+1}(x_{iq_n}),0<i<a_{n+1}$ are contained in $[x_{(a_{n+1}-1)q_n},x_{(a_{n+1}-1)q_n}']$ and $\absv{[0,x_{(a_{n+1}-1)q_n}]}/\absv{I^n_0}<q^n$. There is only one interval of $M^{n+1}\cap M^n$ outside of $[x_{(a_{n+1}-1)q_n},x_{(a_{n+1}-1)q_n}']$: $M^{n+1}(x_{q_n})$ equal to either $[x_{a_{n+1}q_n},x_{a_{n+1}q_n+(a_{n+2}-1)q_{n+1}}]$ if $a_{n+2}>1$, or $[x_{a_{n+1}q_n},x_{a_{n+1}q_n+a_{n+3}q_{n+2}}]$ if $a_{n+2}=1$. So, $f^{q_{n-1}}$ maps $M^{n+1}(x_{q_n})$ to either $[x_{q_{n+1}},x_{a_{n+2}q_{n+1}}]$, or $[x_{q_{n+1}},x_{q_{n+3}}]$, while $[0,x_{a_{n+1}q_n}]$ is mapped homeomorphically to $[x_{q_{n-1}},x_{q_{n+1}}]$; that is, the ratio of two images is lass than $p^n$. Since the distortion of $f^{q_{n-1}-1}$ on $[x_1,x_{1+a_{n+1}q_n}]$ is close to $0$, and near the origin $f$ is quadratic, we obtain that $\absv{M^{n+1}(x_{q_n})}/\absv{I^n_0}<p_1^n$.
	
	Further, because $f^{q_{n-1}-k}:{I_k^n}\to T^{n-1}$ and $f^{q_n-k}:{J_k^n}\to T^n$ have bounded distortion, we obtain that the ratio of lengths of every $M^{n+1}(x)$ outside of $I^{n+1}_0$ to the corresponding $M^n(x)$ is less than $p_2^n$.
	
	The Hausdorff $\varepsilon$-dimension of $M^n$ is less than
	$$\sum_{k=0}^{q_n-1} M^n(x_{k})^\varepsilon<q_n p_2^{((n-1)+(n-2)+\dots+2+1)\varepsilon}C=q_n p_2^{n^2 \varepsilon}C\to 0$$
	as $n\to\infty$ because $q_n$ grow at most exponentially. Hence, the Hausdorff dimension of $\overline{\mathcal{O}}$ is equal to $0$.
\end{proof}

\section{Circle renormalizations for unimodal maps}
\label{sec:renorm}

As was shown earlier, the $\theta$-recurrent maps are not infinitely renormalizable. However, it possible to make a surgery, not affecting the dynamics of the critical value, and to obtain a function from the class $\mathcal{A}$, defined below, and an associate (i.e.\ dependent on $\theta$) renormalization procedure within this class. In case of $\theta$ of bounded type it will follow that if $\delta_n^i$'s are bounded from below, the sequence of renormalizations must have a limiting point which is ``almost'' polynomial-like of type (2,1) (see \cite{LM}). This, however, leads to the same contradiction as in case of Fibonacci maps.  

We introduce a special class of functions, generalizing class $\mathcal{A}$ from \cite{LM}, which will allow to define a renormalization operator preserving this class. Let $J,T\subset I=[-1,1]$, be disjoint closed intervals and $0$ belongs to the interior of $T$. Consider a function 
$$f:J\cup T\to I$$
such that
\begin{enumerate}
	\item $f:J\to I$ is homeomorphism,
	\item $f:T\to I$ is unimodal with the minimum point at $0$ and $f(\partial T)=\{1\}$.
\end{enumerate}
Space of all such functions we denote by $\mathcal{A}$. Note, that comparing to \cite{LM}, our definition does \emph{not} fix $J$ to be on the left of $T$. 
\begin{figure}[h]
	\includegraphics[width=20em]{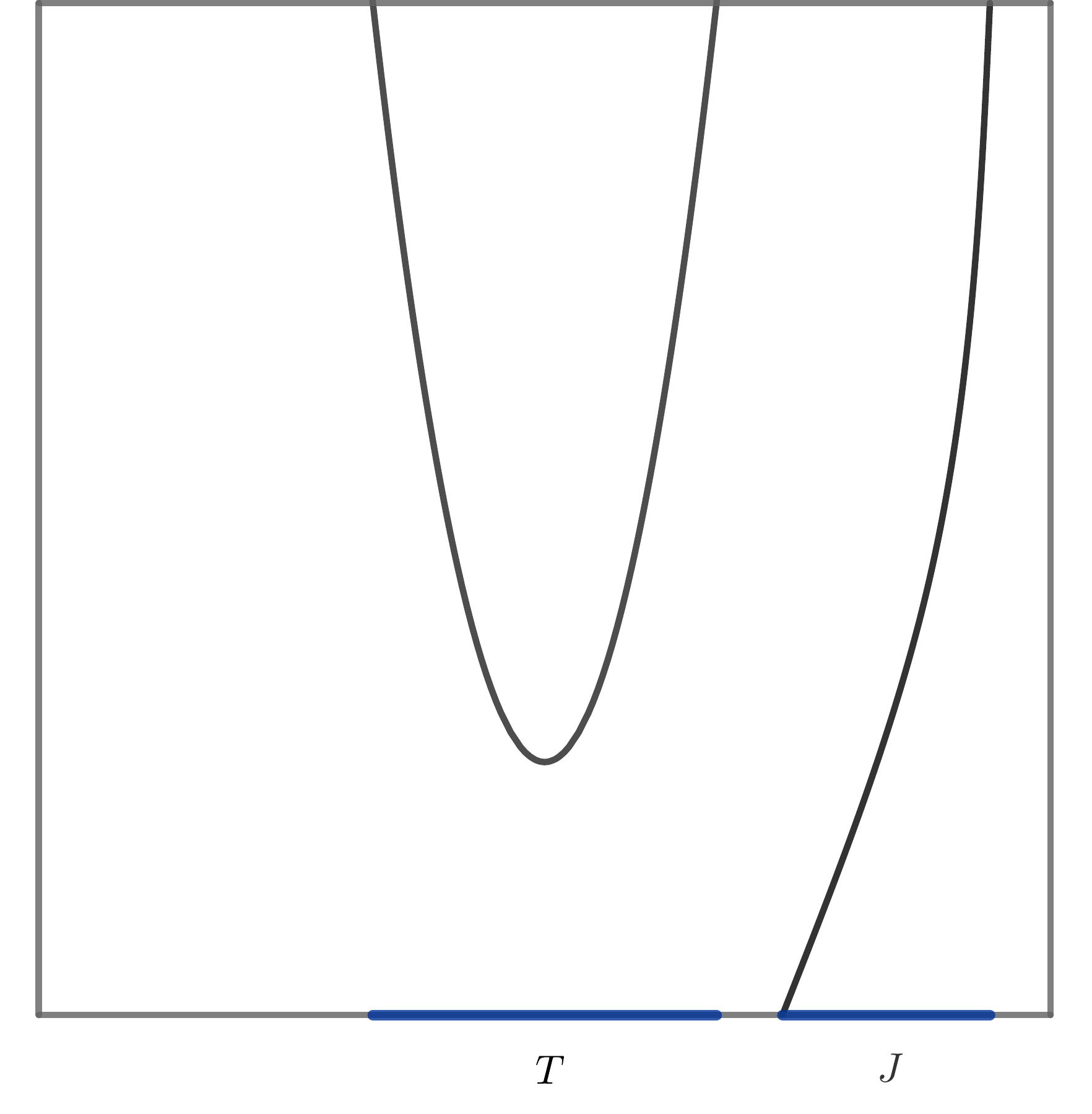}
	\caption{A function belonging to $\mathcal{A}$.}
	\label{pic:a-function}
\end{figure}

From here we assume that $T$ is symmetric with respect to $0$ and $f|T$ is even. Before providing a formal definition of the renormalization operator, we need to introduce more precise subclasses of $\mathcal{A}$. Let $\theta=[a_1,a_2,a_3,...]\in (0,1)$ be an irrational angle (not necessarily admissible), and $s\in\{+,-\}$. We are going to describe a class $\mathcal{A}_\theta^s$ in a similar way to how it is done in Theorem~\ref{thm:map_construction} by describing the position of the critical orbit on $I$, but for a function from class $\mathcal{A}$. Here is the description. 
\begin{enumerate}
	\item $0$ has infinite well-defined orbit $\{x_n\}$ and times of closest recurrence of $0$ are $q_0=1$ (if $a_1>1$), $q_1,q_2,...$. Denote by $q_b=1$ the time of the first best recurrence, i.e., either $b=0$ or $b=1$;
	\item if $q_{b+1}=2$, then $J$ is to the left of $T$, otherwise $J$ is to the right of $T$;
	\item $x_{q_b}<0$;
	\item if $s=+$ then $x_{q_{b+1}}>0$, otherwise $x_{q_{b+1}}<0$;
	\item if $s=+$ then the orientation of $f|_J$ coincides with the orientation of the closest to it branch of $f|_T$, otherwise the orientation of $f|_J$ is opposite to the orientation of the closest to it branch of $f|_T$;
	\item signs of $x_{q_n}$ change with with interval $2$ (exactly as for $\theta$-recurrent unimodal maps, but two first points of closest recurrence can have the same sign due to item (2));
	\item relations (2),(3) of Theorem~\ref{lmm:signs} are satisfied;
	\item   
	\begin{enumerate}
		\item if $a_1>2$ or $a_1=1,a_2>1$, i.e., $q_{b+1}>2$ and $J$ is to the right of $T$, then $x_1<0<-x_1<x_2<...<x_{q_{b+1}-2}$ and $x_{q_{b+1}}$ are all in $T$ while $x_{q_{b+1}-1}\in J$,
		\item if $a_1=2$ or $a_1=1,a_2=1$, i.e., $q_{b+1}=2$ and $J$ is to the left of $T$, then $x_b\in J$, while $x_{q_{b+1}}\in T$;
	\end{enumerate} 
	\item if $\varkappa=\sum_{i=0}^\infty\gamma_i q_i$ with $\gamma_k$ first non-zero term, then $x_\varkappa$ and $x_{\gamma_k q_k}$ together belong either to $T$ or $J$ (it follows from the previous item that to $J$ belong only either all $x_\varkappa$ with $\varkappa$ having $\gamma_b=q_{b+1}-1$, or all $x_\varkappa$ with $\varkappa$ having $\gamma_b=1$);
\end{enumerate}

The existence of such functions $f\in\mathcal{A}_\theta^s$ for every pair $\theta,s$ is not difficult to show by construction as in Theorem~\ref{thm:map_construction}. However, we do not need it: a function from $\mathcal{A}_{[1,1,1,a_4,...]}^{+}$ can be obtained from a $\theta$-recurrent unimodal map by a simple surgery, so we will only define a renormalization operator
$$\mathcal{R}:\bigcup_{\theta,s}\mathcal{A}_\theta^s\to\bigcup_{\theta,s}\mathcal{A}_\theta^s.$$

On the set of irrational angles $\theta=[a_1,a_2,a_3,...]$ define the shift map $\sigma$ so that $\sigma([b_1,b_2,b_3,...])$ is equal either to $[b_1-1,b_2,b_3,...]$ if $b_1>1$, or to $[b_2,b_3,b_4,...]$ otherwise. Also, let $-s$ denote the sign opposite to $s$.

Here is the formal definition of $\mathcal{R}$.
\begin{itemize}
	\item ($q_{b+1}>2$) For $\varkappa=\sum_{i=b}^\infty\gamma_i q_i=[\gamma_b\gamma_{b+1}\gamma_{b+2}\cdots]$, $x_\varkappa\in J$ iff $\gamma_{b}=q_{b+1}-1>1$. Let $J_1\subset T$ be the maximal closed interval containing $x_{q_{b+1}-2}$, not containing $0$ and such that $f^2(J_1)=T$, and let $T_1:=(f|_T)^{-1}(T)$. After rescaling of the map $g:J_1\cup T_1\to I_1=T$ such that $g|_{J_1}=f^2,g|_{T_1}=f$ we obtain the map $\mathcal{R}f\in\mathcal{A}_{\sigma(\theta)}^s$.
	Indeed, indices of postcritical set change as follows: for $\varkappa=\sum_{i=b}^\infty\gamma_i q_i$ with $\gamma_{b}=q_{b+1}-1$, i.e., for $x_\varkappa\in J$, the points $x_\varkappa$ disappear and for the rest of indices $\varkappa$ their representation $[\gamma_b\gamma_{b+1}\gamma_{b+2}\cdots]$ stays the same though corresponds to the Ostrowski numeration system associated to $\sigma(\theta)$. Note also that the orientation of $\mathcal{R}f|_{J_1}$ coincides with the orientation of $f|_J$ if $q_{b+1}>3$, and changes to the opposite if $q_{b+1}=3$. 
	\item ($q_{b+1}=2$) For $\varkappa=\sum_{i=b}^\infty\gamma_i q_i=[\gamma_b\gamma_{b+1}\gamma_{b+2}\cdots]$, $x_\varkappa\in J$ iff $\gamma_{b}=q_b=1$. Let $T_1\subset T$ be the maximal closed interval containing $f^2(T_1)\subset T$, and let $J_1$ be the maximal closed interval containing $x_{q_{b+2}-1}=x_{a_{b+2}q_{b+1}}$ such that $f(J_1)\subset T$. After rescaling of the map $g:J_1\cup T_1\to I_1=T$ such that $g|_{J_1}=f,g|_{T_1}=f^2$ we obtain the map $\mathcal{R}f\in\mathcal{A}_{\sigma(\theta)}^{-s}$.
	In fact, indices of postcritical set change as follows: for $\varkappa=\sum_{i=b}^\infty\gamma_i q_i$ with $\gamma_{b}=q_b=1$, i.e., for $x_\varkappa\in J$, the points $x_\varkappa$ disappear and for the rest of indices $\varkappa$ their representation $[\gamma_b\gamma_{b+1}\gamma_{b+2\cdots}]$ changes to the shifted presentation $[\gamma_{b+1}\gamma_{b+2}\cdots]$ in the Ostrowski numeration system associated to $\sigma(\theta)$. If $q_{b+2}>3$, then $\mathcal{R}f|_{J_1}$ has the opposite orientation to that of the branch of $f|_T$ containing $x_{q_{b+1}}$, whence stays the same if $s=-$ and changes otherwise. If $q_{b+2}=3$, then $\mathcal{R}f|_{J_1}$ has the same orientation to that of the branch of $f|_T$ containing $x_{q_{b+1}}$, whence stays the same if $s=+$ and changes otherwise.
\end{itemize}

To sum up, the operator $\mathcal{R}$ will act in the following way:
\begin{itemize}
	\item if $q_{b+1}>2$, then $\mathcal{R}(\mathcal{A}_\theta^s)\subset\mathcal{A}_{\sigma(\theta)}^s$;
	\item if $q_{b+1}=2$, then $\mathcal{R}(\mathcal{A}_\theta^s)\subset\mathcal{A}_{\sigma(\theta)}^{-s}$. 
\end{itemize}

An example of such renormalization is presented on Picture~\ref{pic:renorm}.

Finally, we are ready to finish the proof of Theorem~\ref{thm:hausdorff_dim_bounded}, that is we prove the next lemma.

\begin{lmm}
	If $\theta$ is of bounded type, then every $\theta$-recurrent  $C^2$ map with non-flat critical point belongs to $\mathcal{F}_0(\theta)$.
\end{lmm}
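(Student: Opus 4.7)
The plan is to argue by contradiction: suppose $f$ is $\theta$-recurrent with $\theta$ of bounded type, but $f\notin\mathcal{F}_0(\theta)$, i.e., there exists $\epsilon>0$ with $\delta_n^1\geq\epsilon$ for all $n$. Combined with Lemma~\ref{lmm:a_priori_2} and boundedness of the partial quotients, this forces $\delta_n^i\geq\epsilon'>0$ for all pairs $(n,i)$ with $i\leq a_{n+1}$ and some $\epsilon'$ depending only on $f$ and $\theta$. The goal is to extract, via the renormalization operator $\mathcal{R}$ constructed in Section~\ref{sec:renorm}, a limit map with rigidity strong enough to contradict an explicit geometric degeneration.

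First, I would perform the surgery outlined at the beginning of Section~\ref{sec:renorm} to replace $f$ (outside a neighbourhood of $\overline{\mathcal{O}}$, hence without changing any $\delta_n^i$) by a map $\tilde f\in\mathcal{A}_{\theta}^{+}$. The renormalizations $\tilde f_n:=\mathcal{R}^n\tilde f$ lie in $\mathcal{A}_{\sigma^n\theta}^{\pm}$; because $\theta$ has bounded partial quotients and the shift $\sigma$ acts on continued fractions by (possibly decrementing and) shifting, the sequence $\{\sigma^n\theta\}$ also has uniformly bounded partial quotients, and by compactness of $\{\text{admissible shifts}\}$ there is a subsequence $\sigma^{n_k}\theta\to\theta'$ with $\theta'$ of bounded type. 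The a priori bounds of Section~\ref{sec:a_priori} (Lemmas~\ref{lmm:a_priori_bounds_deltas} and \ref{lmm:area_bounds}) together with Koebe distortion applied along the pull-back chains $\mathbf{H}^n$ and $\mathbf{G}$ show that, after rescaling, the branches of $\tilde f_n$ have uniformly bounded $C^2$-geometry and are uniformly bounded away from being degenerate; this is the standard compactness mechanism and proceeds exactly as in \cite{LM}. Extracting a $C^1$-limit $g$ along a subsequence of renormalizations yields a map $g\in\mathcal{A}_{\theta'}^{\pm}$ that is $\theta'$-recurrent with $\theta'$ of bounded type, and whose $\delta$-ratios are inherited from those of $f$, so still bounded below by $\epsilon'$.

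Now I would upgrade $g$ to a polynomial-like object. Applying one further renormalization step (or finitely many, depending on the leading partial quotient of $\theta'$) converts $g$ into a polynomial-like map of type $(2,1)$ in the sense of \cite{LM}; the verification is the same as for the Fibonacci case modulo the bookkeeping that Section~\ref{sec:renorm} makes explicit for general $\theta'$. At this point one invokes the rigidity theorem of Lyubich--Milnor: any two polynomial-like maps of type $(2,1)$ with the same combinatorics $\theta'$ are quasiconformally (and hence quasisymmetrically, on the post-critical set) conjugate. Thus the class of $\theta'$-recurrent polynomial-like $(2,1)$ maps with fixed combinatorics forms a single quasisymmetric conjugacy class, and in particular either all its members have $\delta_n^i$ bounded below, or none of them do.

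The contradiction is then obtained from an explicit construction (the analogue of the ``tower'' example in \cite{LM}): one builds within the same quasisymmetric class a $\theta'$-recurrent polynomial-like map $h$ whose $\delta_n^i$'s are not bounded below. Since $g$ and $h$ are quasisymmetrically conjugate on their post-critical sets, bounded-below $\delta$-ratios are a quasisymmetric invariant (with constants depending only on the conjugacy), so $g$ also must have arbitrarily small $\delta_n^i$, contradicting the lower bound $\epsilon'$ passed to the limit. I expect the main technical obstacle to be precisely this last step --- running the Lyubich--Milnor rigidity/degenerating-example argument uniformly across the family $\{\mathcal{A}_{\theta'}^{\pm}\}_{\theta'\text{ bounded type}}$ rather than just for the golden mean --- but, as emphasized in the introduction, the proofs are the same as in \cite{LM} up to routine modifications, so one only has to check that no step uses the substitution-fixed-point structure special to the Fibonacci case; the renormalization $\mathcal{R}$ was designed in Section~\ref{sec:renorm} with exactly this in mind.
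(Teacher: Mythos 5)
Your proposal is correct and mirrors the paper's own argument: after the surgery into $\mathcal{A}$, use bounded type plus the a priori and (hypothetical) lower bounds on $\delta_n^i$ to extract a convergent subsequence of renormalizations, convert the limit to a polynomial-like map of type $(2,1)$, invoke quasisymmetric rigidity from \cite{LM}, and contradict via an explicit example with degenerating geometry. The paper phrases the same scheme slightly more telegraphically (the reductio is implicit), but the decomposition and the citations to \cite{LM} are the same.
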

\begin{proof}
	The proof goes exactly as in case of Fibonacci maps in \cite{LM}. So, we only sketch the scheme of the proof and give references. Note that in the notation of \cite{LM} $J^n$ corresponds to our $J^n_{q_{n-1}}$.
	
	First, we do a surgery of the map $f$ and obtain a map $\tilde{f}\in\mathcal{A}$ as in \cite[Paragraph after Lemma~6.4]{LM}. The sequence of $\tilde{f}$. Since $\theta$ is of bounded type, the sequence $\mathcal{R}_n\tilde{f}$ has a subsequence converging to some $g\in\mathcal{E}\subset\mathcal{A}$ where $\mathcal{E}$ is a certain space of analytic maps (for precise definitions of $\mathcal{E}$ and topology see \cite[Section 6]{LM}). One may assume that $g\in\mathcal{A}^{-}$ in the notation of \cite{LM}, otherwise do a few more renormalizations.
	
	Again, after additional renormalization this map $g$ can be made into a polynomial-like map $h$ of type (2,1) (see \cite[Section~8]{LM}). On the other hand, all such maps are quasi-symmetrically conjugate (\cite[Corollary~7.4]{LM}). But exactly as in \cite[Example~7.1]{LM} one can construct a polynomial-like map of type (2,1) with an arbitrarily small $\delta_1^1$. Hence $f\in\mathcal{F}_0(\theta)$.
\end{proof}

\section{$\theta$ with slow growth of denominators}

Some of our estimates can be generalized to the case of angle $\theta$ with sufficiently slow growth of denominators if we restrict to quadratic polynomials. However, we must begin in a slightly more general setting.

Let $f\in\mathcal{A}$ have a non-positive Schwarzian derivative and be equal to the quadratic polynomial $x^2+c$ near $0$ (note that analogous \cite[Example~7.1]{LM} with the corresponding $\theta$ is an example of such map).

First, we prove a few more elaborate estimates on $\delta_n^i$'s. For now, no bounds on $\theta$ are considered.

Note, that from the inequality
$$\frac{1}{1-y^2}\leq\left(\frac{1+x}{1-x}\right)^2$$
for positive $x,y$ follows that
$$y<2\sqrt{x}.$$
Since in our case the $O(.)$ bounds of Lemma~\ref{lmm:a_priori_2} are equal to $0$, we obtain for $i<a_{n+1}$
$$\delta_n^i<2\sqrt{\delta_n^{i+1}},$$
and
$$\delta_n^{a_{n+1}}<2\sqrt{\delta_{n-1}^1}.$$
It follows immediately that $\alpha_{n+1}<4(\alpha_n)^{1/2^{a_{n+2}}}$.

Now, we can obtain a more elaborate version of Lemma~\ref{lmm:derivatives} in the new setting. Recall that according to our notation $\alpha_n=\delta_{n+1}^{a_{n+2}}$.

\begin{lmm}
	\label{lmm:derivatives2}
	There is a constant $C>4$, not depending on $f$ and $\theta$ such that the following statements take place. Denote $K_n:=1+C\alpha_n$.
	
	For every $x\in[x_1,x_{1+a_{n+2}q_{n+1}}]$,
	$$\frac{1}{K_{n-1}}\frac{d_n^1}{(d_{n+1}^{a_{n+2}})^2}<\absv{(f^{q_n-1})'(x)}=K_{n-1}\left(1+\alpha_{n+1}\alpha_n\right)\frac{d_n^1}{(d_{n+1}^{a_{n+2}})^2}.$$
	
	If $a_{n+1}>1$, then for $0<i<a_{n+1}$ and $x=x(i)\in [x_1,x_{1+iq_n}]$,
	$$\frac{1}{K_{n-1}}\frac{d_n^{i+1}}{(d_n^i)^2}<\absv{(f^{q_n-1})'(x)}<K_{n-1}\left(1+\delta_n^1\delta_n^2\cdots\delta_n^i\right)\frac{d_n^{i+1}}{(d_n^i)^2}.$$
	
\end{lmm}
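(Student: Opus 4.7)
The plan is to adapt the argument of Lemma~\ref{lmm:derivatives}, exploiting two special features of the present setting: since $f$ has non-positive Schwarzian, the Koebe distortion bound in the Appendix applies with no $O(\cdot)$ error, and since $f$ coincides with $x^{2}+c$ near the origin, the lengths of the pulled-back intervals can be computed exactly rather than estimated. In both parts of the lemma I would work with the monotone homeomorphism $f^{q_n-1}\colon H_1^n\to T^{n-2}$ from Lemma~\ref{lmm:monotonicity_of_G}, restricted to the appropriate subinterval.

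For the first bound, the quadratic identity gives $\absv{[x_1,x_{1+a_{n+2}q_{n+1}}]}=(d_{n+1}^{a_{n+2}})^{2}$, and Theorem~\ref{thm:signs} forces $x_{q_n}$ and $x_{q_{n+2}}$ to lie on opposite sides of $0$, so $\absv{[x_{q_n},x_{q_{n+2}}]}=d_n^{1}+d_{n+2}^{1}$. The Mean Value Theorem then produces $\xi\in[x_1,x_{1+a_{n+2}q_{n+1}}]$ with
$$
\absv{(f^{q_n-1})'(\xi)}\;=\;\frac{d_n^{1}+d_{n+2}^{1}}{(d_{n+1}^{a_{n+2}})^{2}}\;=\;\frac{d_n^{1}}{(d_{n+1}^{a_{n+2}})^{2}}\bigl(1+\lambda_{n+1}\lambda_{n+2}\bigr),
$$
and since $d_{k+1}^{1}\le d_{k+1}^{a_{k+2}}$ one has $\lambda_{k+1}\le\alpha_k$, hence $\lambda_{n+1}\lambda_{n+2}\le\alpha_n\alpha_{n+1}$. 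To pass from $\xi$ to an arbitrary $x$, I apply Koebe to $(f^{q_n-1})^{-1}\colon T^{n-2}\to H_1^n$: the image $[x_{q_n},x_{q_{n+2}}]$ has diameter of order $d_n^{1}$ inside $T^{n-2}$ of radius $d_{n-2}^{1}$, and since $d_n^{1}<d_n^{a_{n+1}}=\alpha_{n-1}d_{n-1}^{1}\le\alpha_{n-1}d_{n-2}^{1}$, its Poincar\'e length is at most $\alpha_{n-1}$ up to a universal constant. Non-positive Schwarzian converts this into an explicit multiplicative distortion bound of the form $K_{n-1}=1+C\alpha_{n-1}$. Combining with the MVT identity (using $1+\lambda_{n+1}\lambda_{n+2}\le 1+\alpha_n\alpha_{n+1}$ for the upper bound, and dropping the positive term for the lower bound) gives the first pair of inequalities. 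The second pair is the same argument run on $f^{q_n-1}\colon[x_1,x_{1+iq_n}]\to[x_{q_n},x_{(i+1)q_n}]$: the quadratic identity gives $\absv{[x_1,x_{1+iq_n}]}=(d_n^{i})^{2}$, Theorem~\ref{thm:signs}(2) gives $\absv{[x_{q_n},x_{(i+1)q_n}]}=d_n^{1}+d_n^{i+1}=d_n^{i+1}(1+\delta_n^{1}\delta_n^{2}\cdots\delta_n^{i})$, and the image still fits inside $[-d_n^{a_{n+1}},d_n^{a_{n+1}}]$, so the Poincar\'e length in $T^{n-2}$ is bounded by $\alpha_{n-1}\lambda_{n-1}\le\alpha_{n-1}$ and Koebe again contributes the factor $K_{n-1}$.

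The principal obstacle is to verify that the Koebe space controlling the distortion of $f^{q_n-1}$ is indeed of order $\alpha_{n-1}$, rather than a coarser quantity (such as $\lambda_{n-1}$ or $\delta_n^{i}$) that might arise from a naive application of Lemma~\ref{lmm:a_priori_2}. The key point is that in both cases the image interval lives well inside the symmetric interval of radius $d_n^{a_{n+1}}$ around $0$, whose ratio to the radius $d_{n-2}^{1}$ of $T^{n-2}$ equals $\alpha_{n-1}\lambda_{n-1}$, which is dominated by $\alpha_{n-1}$.
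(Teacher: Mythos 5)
Your proof is correct and follows essentially the same approach as the paper: consider the monotone map $f^{q_n-1}$ from $H_1^n$ (resp.\ $[x_1,x_{1+iq_n}]$) to $T^{n-2}$ (resp.\ $[x_{q_n},x_{(i+1)q_n}]$), use the quadratic form of $f$ near $0$ to compute the length of the domain interval exactly, use Theorem~\ref{thm:signs} to compute the length of the image, apply the Mean Value Theorem to produce the mean value $\xi$, and control the ratio $(f^{q_n-1})'(x)/(f^{q_n-1})'(\xi)$ by applying the Koebe principle to $(f^{q_n-1})^{-1}$ with the Poincar\'e-length estimate $\left[T^n:T^{n-2}\right]\le C\lambda_n\lambda_{n-1}<C\alpha_{n-1}$. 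Your identification of the correct Koebe space (the image interval sitting inside $[-d_n^{a_{n+1}},d_n^{a_{n+1}}]\subset T^{n-2}$, giving ratio $\alpha_{n-1}\lambda_{n-1}\le\alpha_{n-1}$) matches the paper's estimate.
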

\begin{proof}
	
	Let $x\in[x_1,x_{1+a_{n+2}q_{n+1}}]$. Consider the homeomorphism
	$$f^{q_n-1}:\left(H_1^n,[x_1,x_{1+a_{n+2}q_{n+1}}]\right)\to \left(T^{n-2},[x_{q_n},x_{q_{n+2}}]\right).$$
	From Koebe principle (for $f^{-1}$) and bounds in Lemma~\ref{lmm:area_bounds} with $O(.)=0$ for any $\xi\in[x,x_{1+a_{n+2}q_{n+1}}]$ we have
	$$\frac{(f^{q_n-1})'(x)}{(f^{q_n-1})'(\xi)}=1+O\left(\left[T^n:T^{n-2}\right]\right)<1+C\lambda_n\lambda_{n-1}<1+C\alpha_{n-1},$$
	where the constant $C$ does not depend neither on $\theta$, nor on $f$. By the Mean Value Theorem, there exists $\xi$ such that
	$$\absv{(f^{q_n-1})'(\xi)}=\frac{d_n^1+d_{n+2}^1}{(d_{n+1}^{a_{n+2}})^2}=\frac{d_n^1}{(d_{n+1}^{a_{n+2}})^2}\left(1+\lambda_{n+2}\lambda_{n+1}\right)<\frac{d_n^1}{(d_{n+1}^{a_{n+2}})^2}\left(1+\alpha_{n+1}\alpha_n\right).$$
	Thus, we obtain the inequality
	$$\frac{1}{K_{n-1}}\frac{d_n^1}{(d_{n+1}^{a_{n+2}})^2}<\absv{(f^{q_n-1})'(x)}=K_{n-1}\left(1+\alpha_{n+1}\alpha_n\right)\frac{d_n^1}{(d_{n+1}^{a_{n+2}})^2}.$$
	
	Next, if $a_{n+1}>1$, for $0<i<a_{n+1}$ consider the homeomorphism
	$$f^{q_n-1}:\left(H_1^n,[x_1,x_{1+iq_n}]\right)\to \left(T^{n-2},[x_{q_n},x_{(i+1)q_n}]\right).$$
	Repeating the argument above we get
	$$\frac{(f^{q_n-1})'(x)}{(f^{q_n-1})'(\xi)}=1+O\left(\left[[x_{q_n},x_{(i+1)q_n}]:T^{n-2}\right]+q^n\right)<K_{n-1}$$
	and
	$$\absv{(f^{q_n-1})'(\xi)}=\frac{d_n^1+d_n^{i+1}}{(d_n^i)^2}=\frac{d_n^{i+1}}{(d_n^i)^2}\left(1+\delta_n^1\delta_n^2\cdots\delta_n^i\right).$$
	Hence,
	$$\frac{1}{K_{n-1}}\frac{d_n^{i+1}}{(d_n^i)^2}<\absv{(f^{q_n-1})'(x)}<K_{n-1}\left(1+\delta_n^1\delta_n^2\cdots\delta_n^i\right)\frac{d_n^{i+1}}{(d_n^i)^2}.$$
\end{proof}

The next lemma tell how the quantity $\alpha_n^2\lambda_n$ changes with $n$ sand estimates the ratio of $\delta_n^k$ and $\alpha_n^{2^k}$.

\begin{lmm}
	\label{lmm:K_bounds}
	Let $C>4$ be the constant from Lemma~\ref{lmm:derivatives2}. Then
	$$\alpha_{n+1}^2 \lambda_{n+1}<\frac{K_{n+1}K_n K_{n-1}^{a_{n+1}} K_{n-2}}{2^{a_{n+1}}}\alpha_n^2\lambda_n.$$
	
	If, additionally, $\alpha_n<1/2, K_{n-1}<\sqrt{2}$ and $K_n<2$, then for $0<k<a_{n+1}$,
	$$\frac{1}{(K_{n-1}^2 K_n)^{{2^k}-1}}<\frac{\delta_n^k}{\alpha_n^{2^k}}<(K_{n-1}^2 K_n)^{{2^k}-1}.$$
\end{lmm}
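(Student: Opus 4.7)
The plan is to derive the first inequality via a chain-rule expansion of $(f^{q_{n+1}-1})'(x_1)$ combined with the two estimates of Lemma~\ref{lmm:derivatives2}. Expanding
\begin{equation*}
(f^{q_{n+1}-1})'(x_1)=\prod_{i=0}^{a_{n+1}-1}\bigl[(f^{q_n-1})'(x_{1+iq_n})\cdot f'(x_{(i+1)q_n})\bigr]\cdot (f^{q_{n-1}-1})'(x_{1+a_{n+1}q_n}),
\end{equation*}
I would bound the true derivative from above by applying Lemma~\ref{lmm:derivatives2} at level $n+1$ (factor $K_n(1+\alpha_{n+2}\alpha_{n+1})$ times $d_{n+1}^1/(d_{n+2}^{a_{n+3}})^2$), and from below by the product of the \emph{lower} bounds of each chain-rule factor: first formula at level $n$ for $(f^{q_n-1})'(x_1)$, second formula at level $n$ for each $(f^{q_n-1})'(x_{1+iq_n})$ with $1\le i<a_{n+1}$, and first formula at level $n-1$ for the last factor. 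Because only lower bounds appear on the right, the awkward multipliers $(1+\delta_n^1\cdots\delta_n^i)$ never enter, and the total error collapses to $1/(K_{n-1}^{a_{n+1}}K_{n-2})$. The chain-rule product of approximations telescopes to $2^{a_{n+1}}d_{n-1}^1/(d_{n+1}^{a_{n+2}})^2$, whose ratio to $d_{n+1}^1/(d_{n+2}^{a_{n+3}})^2$ simplifies algebraically to $\alpha_n^2\lambda_n/(2^{a_{n+1}}\alpha_{n+1}^2\lambda_{n+1})$; combining the two bounds and estimating $1+\alpha_{n+2}\alpha_{n+1}\le K_{n+1}$ yields the first claim.

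For the second inequality I plan to induct on $k$, exploiting the fact that $(f^{q_n-1})'$ evaluated at $x_{1+kq_n}$ admits two estimates from Lemma~\ref{lmm:derivatives2}: the second formula with $i=k$ (since $x_{1+kq_n}$ is the right endpoint of $[x_1,x_{1+kq_n}]$) and with $i=k+1$. Equating them yields
\begin{equation*}
\frac{1}{K_{n-1}^2(1+\delta_n^1\cdots\delta_n^k)}\le\frac{\delta_n^{k+1}}{(\delta_n^k)^2}\le K_{n-1}^2(1+\delta_n^1\cdots\delta_n^{k+1}).
\end{equation*}
For the base case $k=1$ one compares the first formula at $x_1$ with the second formula at $i=1$, producing the analogous bound for $\beta_1:=\delta_n^1/\alpha_n^2$. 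The resulting implicit inequality $\delta_n^1\le K_{n-1}^2(1+\delta_n^1)\alpha_n^2$ together with $K_{n-1}^2\alpha_n^2<1/2$ gives $\delta_n^1\le 2K_{n-1}^2\alpha_n^2<4\alpha_n^2\le C\alpha_n$ (using $C>4$ and $\alpha_n<1/2$), so $1+\delta_n^1\le K_n$ and $\beta_1\in[1/B,B]$ with $B:=K_{n-1}^2 K_n$.

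The crux is the telescoping identity $\delta_n^1\cdots\delta_n^k=d_n^1/d_n^{k+1}\le d_n^1/d_n^2=\delta_n^1$, which shows each multiplier $(1+\delta_n^1\cdots\delta_n^j)$ is uniformly bounded by $1+\delta_n^1\le K_n$. Consequently $\delta_n^{k+1}/(\delta_n^k)^2\in[1/B,B]$, and the recursion $\beta_{k+1}=(\delta_n^{k+1}/(\delta_n^k)^2)\cdot\beta_k^2$ propagates the inductive bound $\beta_k\in[B^{-(2^k-1)},B^{2^k-1}]$ to $\beta_{k+1}\in[B^{-(2^{k+1}-1)},B^{2^{k+1}-1}]$. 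The main obstacle I anticipate is precisely ensuring the $(1+\delta_n^1\cdots\delta_n^j)$ factors can be absorbed into a single $K_n$ rather than accumulating geometrically in $k$; without the telescoping identity the induction would produce a cumulative factor of the form $(K_{n-1}^2K_n)^{\sum_j(2^j-1)}$ that is far too large to fit inside the claimed bound.
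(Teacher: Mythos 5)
Your proof is correct and follows essentially the same route as the paper: for the first inequality you expand $(f^{q_{n+1}-1})'(x_1)$ by the chain rule, apply the upper bound of Lemma~\ref{lmm:derivatives2} on the left and the lower bounds on each factor on the right (which correctly avoids the $(1+\delta_n^1\cdots\delta_n^i)$ multipliers), and observe the telescoping of the approximate factors to $2^{a_{n+1}}d_{n-1}^1/(d_{n+1}^{a_{n+2}})^2$; for the second inequality you establish $\delta_n^{k+1}/(\delta_n^k)^2\in[1/B,B]$ by comparing the two Koebe estimates at $x_{1+kq_n}$, with the decisive observation that $\delta_n^1\cdots\delta_n^j\le\delta_n^1<C\alpha_n$ so every multiplier $1+\delta_n^1\cdots\delta_n^j$ is dominated by a single factor $K_n$ rather than accumulating in $k$, exactly as the paper does.
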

\begin{proof}
		Substitute the estimates from Lemma~\ref{lmm:derivatives2} into expression
	$$(f^{q_{n+1}-1})'(x_1)=(f^{q_n-1})'(x_1)\cdot2x_{q_n}\cdot(f^{q_n-1})'(x_{1+q_n})\cdot2x_{2q_n}\times\cdots$$
	$$\times(f^{q_n-1})'(x_{1+(a_{n+1}-1)q_n})\cdot2x_{a_{n+1}q_n}\cdot(f^{q_{n-1}-1})'(x_{1+a_{n+1}q_n})$$
	and obtain	
	$$K_n\left(1+\alpha_{n+2}\alpha_{n+1}\right)\frac{d_{n+1}^1}{(d_{n+2}^{a_{n+3}})^2}>\frac{1}{K_{n-1}}\frac{d_n^1}{(d_{n+1}^{a_{n+2}})^2}\cdot2d_n^1\cdot\frac{1}{K_{n-1}}\frac{d_n^{2}}{(d_n^1)^2}\cdot2d_n^2\times\cdots$$
	$$\times\frac{1}{K_{n-1}}\frac{d_n^{a_{n+1}}}{(d_n^{a_{n+1}-1})^2}\cdot2d_n^{a_{n+1}}\cdot\frac{1}{K_{n-2}}\frac{d_{n-1}^1}{(d_n^{a_{n+1}})^2}.$$
	
	Thus,
	$$\alpha_{n+1}^2 \lambda_{n+1}<\frac{K_n K_{n-1}^{a_{n+1}} K_{n-2}(1+\alpha_{n+2}\alpha_{n+1})}{2^{a_{n+1}}}\alpha_n^2\lambda_n<\frac{K_{n+1}K_n K_{n-1}^{a_{n+1}} K_{n-2}}{2^{a_{n+1}}}\alpha_n^2\lambda_n.$$
	
	From the estimates of the previous lemma for $x=x_1$ we have 
	$$\frac{1}{K_{n-1}}\frac{d_n^1}{(d_{n+1}^{a_{n+2}})^2}<K_{n-1}\left(1+\delta_n^1\right)\frac{d_n^2}{(d_n^1)^2},$$
	or,
	$$\delta_n^1<\alpha_n^2 K_{n-1}^2\left(1+\delta_n^1\right).$$
	Hence,
	$$\delta_n^1<\frac{\alpha_n^2 K_{n-1}^2}{1-\alpha_n^2 K_{n-1}^2}<\alpha_n^2 K_{n-1}^2(1+2\alpha_n^2 K_{n-1}^2)<\alpha_n^2 K_{n-1}^2(1+C\alpha_n)=\alpha_n^2 K_{n-1}^2 K_n.$$
	
	On the other hand, from the inequality
	$$\frac{1}{K_{n-1}}\frac{d_n^{2}}{(d_n^1)^2}<K_{n-1}\left(1+\alpha_{n+1}\alpha_n\right)\frac{d_n^1}{(d_{n+1}^{a_{n+2}})^2}$$
	we obtain
	$$\delta_n^1>\frac{1}{K_{n-1}^2 K_n}\alpha_n^2.$$
	
	Further, we also have 
	$$\frac{1}{K_{n-1}}\frac{d_n^{i+1}}{(d_n^i)^2}<K_{n-1}\left(1+\delta_n^1\delta_n^2\cdots\delta_n^{i+1}\right)\frac{d_n^{i+2}}{(d_n^{i+1})^2},$$
	whence
	$$\delta_n^{i+1}<K_{n-1}^2\left(1+\delta_n^1\delta_n^2\cdots\delta_n^{i+1}\right)(\delta_n^i)^2<K_{n-1}^2\left(1+\alpha_n^2 K_{n-1}^2 K_n\right)(\delta_n^i)^2<K_{n-1}^2 K_n(\delta_n^i)^2.$$
	
	Considering the corresponding ``reverse'' inequality
	$$\frac{1}{K_{n-1}}\frac{d_n^{i+2}}{(d_n^{i+1})^2}<K_{n-1}\left(1+\delta_n^1\delta_n^2\cdots\delta_n^i\right)\frac{d_n^{i+1}}{(d_n^i)^2},$$
	we get
	$$\delta_n^{i+1}>\frac{1}{K_{n-1}^2 K_n} (\delta_n^i)^2.$$
	
	Multiplying all inequalities from $1$ to $k$ one obtains
	$$\delta_n^k<(K_{n-1}^2 K_n)^{1+2+2^2+\dots+2^{k-1}}\alpha_n^{2^k}<(K_{n-1}^2 K_n)^{{2^k}-1}\alpha_n^{2^k},$$
	and analogously for the lower bound.
\end{proof}

Using previous computation we provide the estimate on how $\alpha_n^{2^{a_{n+1}}}\alpha_{n-1}$ changes when $n$ increases.

\begin{lmm}
	\label{lmm:decrease}
	If $\alpha_n,\alpha_{n+1}<1/2$, $K_{n-1},K_n,K_{n+1}<\sqrt{2}$, $a_{n+1},a_{n+2}\leq A$ and $M=\max\{K_{n+1},K_n,K_{n-1},K_{n-2}\}$, then
	$$\frac{\alpha_{n+1}^{2^{a_{n+2}}}\alpha_n}{\alpha_n^{2^{a_{n+1}}}\alpha_{n-1}}<\frac{M^{2^{A+3}}}{2^{a_{n+1}}}.$$
\end{lmm}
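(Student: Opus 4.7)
The plan is to combine the evolution inequality for $\alpha_n^2\lambda_n$ established at the start of Lemma~\ref{lmm:K_bounds} with the two-sided control of the individual $\delta_n^k$ in terms of $\alpha_n^{2^k}$ established at its end. The rough idea is: the first bound describes how the product $\alpha_n^2\lambda_n$ scales with $n$; the second lets one replace $\alpha_n^2\lambda_n$ by $\alpha_n^{2^{a_{n+1}}}\alpha_{n-1}$ up to explicit powers of the $K_i$'s; taking the quotient with the analogous expression shifted by one gives exactly the ratio we need to estimate.

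Concretely, I would first write the telescoping identity
$$\lambda_n \;=\; \delta_n^1\delta_n^2\cdots\delta_n^{a_{n+1}} \;=\; \alpha_{n-1}\prod_{i=1}^{a_{n+1}-1}\delta_n^i,$$
using $\delta_n^{a_{n+1}}=\alpha_{n-1}$. Applying the second displayed inequality of Lemma~\ref{lmm:K_bounds} to each factor in the product and using $\sum_{i=1}^{a_{n+1}-1}2^i=2^{a_{n+1}}-2$ and $\sum_{i=1}^{a_{n+1}-1}(2^i-1)=2^{a_{n+1}}-a_{n+1}-1=:E_n$ (which equals $0$ when $a_{n+1}=1$), one obtains
$$\frac{1}{(K_{n-1}^2K_n)^{E_n}}\,\alpha_n^{2^{a_{n+1}}}\alpha_{n-1}\;\le\;\alpha_n^2\lambda_n\;\le\;(K_{n-1}^2K_n)^{E_n}\,\alpha_n^{2^{a_{n+1}}}\alpha_{n-1}.$$
The same inequality with $n$ replaced by $n+1$ gives the analogous two-sided bound on $\alpha_{n+1}^2\lambda_{n+1}$ in terms of $\alpha_{n+1}^{2^{a_{n+2}}}\alpha_n$, with exponent $E_{n+1}$ and base $K_n^2K_{n+1}$.

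Dividing these two displays and invoking the first inequality of Lemma~\ref{lmm:K_bounds} to bound the ratio $\alpha_{n+1}^2\lambda_{n+1}/(\alpha_n^2\lambda_n)$ yields
$$\frac{\alpha_{n+1}^{2^{a_{n+2}}}\alpha_n}{\alpha_n^{2^{a_{n+1}}}\alpha_{n-1}}\;\le\;\frac{K_{n+1}K_nK_{n-1}^{a_{n+1}}K_{n-2}}{2^{a_{n+1}}}\cdot(K_n^2K_{n+1})^{E_{n+1}}\cdot(K_{n-1}^2K_n)^{E_n}.$$
Since every $K_j\ge 1$, I can replace each factor by the corresponding power of $M$. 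The final step is purely arithmetic: the total exponent of $M$ on the right is
$$3+a_{n+1}+3E_{n+1}+3E_n\;=\;3\cdot 2^{a_{n+2}}+3\cdot 2^{a_{n+1}}-2a_{n+1}-3a_{n+2}-3\;\le\;6\cdot 2^A\;<\;2^{A+3},$$
using $a_{n+1},a_{n+2}\le A$. This gives the stated bound $M^{2^{A+3}}/2^{a_{n+1}}$.

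There is no serious analytic obstacle: all the nontrivial geometric input is already packaged in Lemma~\ref{lmm:K_bounds}, and the argument here is pure bookkeeping. The only thing one must be careful about is the exponent count — in particular, checking that the factor $K_{n-1}^{a_{n+1}}$ coming from the evolution of $\alpha_n^2\lambda_n$ is swallowed by the $M^{2^{A+3}}$ bound rather than producing a term of order $M^A$ that grows linearly in $A$ in the exponent. Since $a_{n+1}\le A\le 2^A\le 2^{A+3}$, this works out cleanly.
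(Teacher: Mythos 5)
Your proof is correct and follows the same route as the paper: combine the evolution inequality for $\alpha_n^2\lambda_n$ from Lemma~\ref{lmm:K_bounds} with the two-sided estimates $\delta_n^k\approx\alpha_n^{2^k}$ to replace $\alpha_n^2\lambda_n$ by $\alpha_n^{2^{a_{n+1}}}\alpha_{n-1}$ on both sides, then count exponents. The only difference is cosmetic: the paper uses the slightly looser exponent $2^{a_{n+1}}-a_{n+1}$ in place of your $E_n=2^{a_{n+1}}-a_{n+1}-1$ (and analogously for $n+1$), but both lead to a total exponent $\leq 6\cdot 2^A<2^{A+3}$.
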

\begin{proof}
	We simply make use of Lemma~\ref{lmm:K_bounds}. Since $\lambda_n=\delta_n^1\delta_n^2\cdots\dots\cdot\delta_n^{a_{n+1}}$,
	$$\alpha_{n+1}^{2^{a_{n+2}}}\alpha_n \frac{1}{(K_n^2 K_{n+1})^{2^{a_{n+2}}-a_{n+2}}}<\frac{K_{n+1}K_n K_{n-1}^{a_{n+1}} K_{n-2}}{2^{a_{n+1}}}\alpha_n^{2^{a_{n+1}}}\alpha_{n-1}(K_{n-1}^2 K_n)^{2^{a_{n+1}}-a_{n+1}}.$$
	Hence,
	$$\frac{\alpha_{n+1}^{2^{a_{n+2}}}\alpha_n}{\alpha_n^{2^{a_{n+1}}}\alpha_{n-1}}<\frac{K_{n+1}^{2^{a_{n+2}}-a_{n+2}+1}K_n^{2(2^{a_{n+2}}-a_{n+2})+2^{a_{n+1}}-a_{n+1}+1}K_{n-1}^{2(2^{a_{n+1}}-a_{n+1})+a_{n+1}}K_{n-2}}{2^{a_{n+1}}}<$$
	$$\frac{M^{6\cdot2^A}}{2^{a_{n+1}}}<\frac{M^{2^{A+3}}}{2^{a_{n+1}}}.$$
	
\end{proof}

The next statement tells precisely how small must be $\alpha_{n-2}$ so $\alpha_m$ decrease exponentially subject to the condition that $a_n\geq\{a_{n-1},a_n,\dots,a_m\}$ and gives an estimate on the speed of decrease.

\begin{lmm}
	\label{lmm:big_lambdas_decrease}
	Assume that $a_{n-1},a_n,\dots,a_m\leq A=a_{n+1}$ for some $m>n$. If $\alpha_{n-2}<2^{-(1+\varDelta)N2^{4N}}$ for some $\varDelta>0$ and $A=A(\epsilon)$ is big enough, then for $0<k<m-n$ and $0<\epsilon<1$,
	$$\alpha_{n+k}<\left(\frac{1+\epsilon}{2}\right)^{(k-1)/A2^A}2^{-(1+\varDelta/2)A2^A}.$$
\end{lmm}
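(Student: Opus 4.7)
The plan is to apply Lemma~\ref{lmm:decrease} iteratively to extract exponential decay of the ``scaling quantity'' $\eta_m:=\alpha_m^{2^{a_{m+1}}}\alpha_{m-1}$, and then to convert this into an upper bound on the individual $\alpha_{n+k}$.

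I first propagate the hypothesis $\alpha_{n-2}<2^{-(1+\varDelta)A2^{4A}}$ three indices forward via the elementary inequality $\alpha_{m+1}<4\alpha_m^{1/2^{a_{m+2}}}$ together with $a_{m+2}\leq A$. This yields successively $\alpha_{n-1}<2^{2-(1+\varDelta)A2^{3A}}$, $\alpha_n<2^{C_0-(1+\varDelta)A2^{2A}}$, and $\alpha_{n+1}<2^{C_1-(1+\varDelta)A2^A}$ with explicit constants $C_0,C_1$. For $A=A(\epsilon)$ large, these are orders of magnitude below any fixed threshold, so $\alpha_\bullet<1/2$ and $K_\bullet=1+C\alpha_\bullet<\sqrt{2}$ on $\{n-2,n-1,n,n+1\}$; in particular Lemma~\ref{lmm:decrease} applies at index $n$. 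Moreover $M^{2^{A+3}}\le\exp(C\alpha_\bullet\cdot 2^{A+3})\le 1+\epsilon$, because the propagated bound makes $C\alpha_\bullet\cdot 2^{A+3}$ negligible with respect to $\epsilon$. Combined with $a_{m+1}\ge 1$, this gives
$$\eta_{m+1}\le\frac{M^{2^{A+3}}}{2^{a_{m+1}}}\eta_m\le\frac{1+\epsilon}{2}\eta_m,$$
and iterating (while verifying inductively that the $K$-hypotheses persist) yields $\eta_{n+k}\le r^k\eta_n$ with $r:=(1+\epsilon)/2$.

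The main obstacle is extracting a bound on $\alpha_{n+k}$ from the decay of $\eta_{n+k}$: $\eta_m$ can be small merely because the factor $\alpha_{m-1}$ is tiny, so decay of $\eta_m$ does not by itself force $\alpha_m$ to decay. Following the scheme from the proof of Lemma~\ref{lmm:asymptotics}, I resolve this by a case split on whether $\alpha_{n+k}\le\alpha_{n+k-1}$. In the easy case one has $\eta_{n+k}\ge\alpha_{n+k}^{2^{a_{n+k+1}}+1}$, and hence
$$\alpha_{n+k}\le\eta_{n+k}^{1/(2^A+1)}\le r^{k/(2^A+1)}\eta_n^{1/(2^A+1)},$$
which, using $\eta_n\le 2^{-2(1+\varDelta)A2^{3A}}$ from the propagation step and the elementary inequality $1/(2^A+1)\ge 1/(A2^A)$ for $A\ge 2$, lies comfortably below the target $r^{(k-1)/A2^A}\cdot 2^{-(1+\varDelta/2)A2^A}$. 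In the opposite case $\alpha_{n+k}>\alpha_{n+k-1}$, a single-step propagation is too weak to match the target, so instead I unwind the identity $\eta_{n+k}=\alpha_{n+k}^{2^{a_{n+k+1}}}\alpha_{n+k-1}$ in the form $\alpha_{n+k}\le\bigl(r\eta_{n+k-1}/\alpha_{n+k-1}\bigr)^{1/2^{a_{n+k+1}}}$, substitute the previously-established $\eta$-bounds together with the inductive bounds on $\alpha_{n+k-1}$, and use that any long run of consecutive ``increasing'' indices is controlled by the $\eta$-decay pushed back $a_{n+k+1}$ generations. The slack from $(1+\varDelta)$ in the hypothesis to $(1+\varDelta/2)$ in the conclusion absorbs the accumulating multiplicative constants $4^{\cdot}$, and the weakening from the natural $1/2^A$ exponent to the slacker $1/(A2^A)$ in the statement provides the buffer to handle the harder case uniformly, closing the induction.
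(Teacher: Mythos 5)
Your easy case is sound and matches the paper: when $\alpha_{n+k}\le\alpha_{n+k-1}$ one has $\alpha_{n+k}^{2^{a_{n+k+1}}+1}\le\eta_{n+k}\le r^k\eta_n$, and the factor $2^{3A}$ in the exponent of the propagated bound for $\eta_n$ gives far more decay than the target requires. The hard case, however, does not close as you describe it. Unwinding $\eta_{n+k}\le r\,\eta_{n+k-1}$ and writing $\eta_{n+k-1}/\alpha_{n+k-1}=\alpha_{n+k-1}^{2^{a_{n+k}}-1}\alpha_{n+k-2}$ gives
$$\alpha_{n+k}\ \le\ r^{1/2^{a_{n+k+1}}}\,\alpha_{n+k-1}^{(2^{a_{n+k}}-1)/2^{a_{n+k+1}}}\,\alpha_{n+k-2}^{1/2^{a_{n+k+1}}},$$
and the total exponent on the scale $2^{-(1+\varDelta/2)A2^A}$ coming from the inductive bounds on $\alpha_{n+k-1},\alpha_{n+k-2}$ is $2^{a_{n+k}}/2^{a_{n+k+1}}$. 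Whenever $a_{n+k}<a_{n+k+1}$ (perfectly possible under the sole hypothesis $a_j\le A$) this exponent is strictly less than $1$, so the resulting upper bound on $\alpha_{n+k}$ is a fractional power of the inductive threshold and therefore \emph{larger} than it; the induction does not propagate.

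The paper avoids this by not feeding the inductive bounds on $\alpha_{n+k-1},\alpha_{n+k-2}$ back into the $\eta$-identity. Since $\alpha_{n+k}>\alpha_{n+k-1}$, the inequality $\eta_{n+k}\ge\alpha_{n+k-1}^{2^{a_{n+k+1}}+1}\ge\alpha_{n+k-1}^{2^A+1}$ together with $\eta_{n+k}<r^k\eta_n$ yields the ``min'' bound $\alpha_{n+k-1}<(r^k\eta_n)^{1/(2^A+1)}$, which retains the double-exponential gain from $\eta_n\lesssim 2^{-2(1+\varDelta)A2^{3A}}$ and is therefore much stronger than the inductive target. This is then combined with the one-step propagation $\alpha_{n+k}<4\alpha_{n+k-1}^{1/2^{a_{n+k+1}}}\le 4\alpha_{n+k-1}^{1/2^A}$, whose exponent $1/2^A$ is uniform and does not depend on the sign of $a_{n+k+1}-a_{n+k}$; the surplus decay from $\eta_n$ survives the $1/2^A$-dilution and absorbs both the constant $4$ (using the $\varDelta$-slack) and the weaker power of $r$. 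Your remark about ``the $\eta$-decay pushed back $a_{n+k+1}$ generations'' may be gesturing at this, but the unwinding formula you actually wrote founders on the exponent mismatch identified above, and the step needs to be replaced by the min-bound-plus-propagation argument.
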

\begin{proof}
	Denote $M_i=\max\{K_{i+1},K_i,K_{i-1},K_{i-2}\}$ and $\Lambda_n=\alpha_n^{2^{a_{n+1}}}\alpha_{n-1}$.
	
	Let $x=\alpha_{n-2}$. Then $\alpha_{n+i}<4(x)^{1/2^{iA}}$, for $i\leq m+2$. From Lemma~\ref{lmm:decrease} we know that if $x$ is sufficiently small, then
	\begin{equation}
		\label{eqn:big_lambdas}
		\Lambda_{n+1}<\frac{M_n^{2^{A+3}}}{2^A}\Lambda_n<\frac{M_n^{2^{A+3}}}{2}\Lambda_n.
	\end{equation}
	We want to show that $M_n^{2^{A+3}}<1+\epsilon$ for $A$ big enough. It is enough to prove the inequality
	$$\left(1+4Cx^{(1/2)^{3A}}\right)^{2^{A+3}}<\left(1+4Cx^{(1/2)^{4A}}\right)^{2^{A+3}}<1+\epsilon.$$
	We want to ``replace'' $3A$ by $4A$ here to use the inequality again during the next steps of induction. We have
	$$\frac{(1+\epsilon)^{1/2^{A+3}}-1}{4C}>\frac{\epsilon2^{-A}}{32C}>2^{-(1+\varDelta)A}>x^{(1/2)^{4A}}.$$
	
	Thus, from \ref{eqn:big_lambdas} follows that
	$$\alpha_{n+1}<\left(\frac{1+\epsilon}{2} \right)^{1/A}\max\{\alpha_n,\alpha_{n-1}\}<\left(\frac{1+\epsilon}{2} \right)^{1/A}\cdot2^{-(1+\varDelta/2)A2^{2A}}.$$
	
	Repeating the discussion above for $\alpha_{n+2},\alpha_{n+1},\alpha_n,\alpha_{n-1}$ with the estimate for $M_{n+1}$ not bigger than for $M_n$, we get
	$$\Lambda_{n+2}<\frac{M_{n+1}^{2^{A+3}}M_n^{2^{A+3}}}{2^2}\Lambda_n<\left(\frac{1+\epsilon}{2}\right)^2\Lambda_n.$$
	Hence,
	$$\min\{\alpha_{n+2},\alpha_{n+1}\}<\left(\frac{1+\epsilon}{2}\right)^{2/A}\max\{\alpha_n,\alpha_{n-1}\}.$$
	
	If $\alpha_{n+2}<\alpha_{n+1}$, we get the corresponding estimate for $\alpha_{n+2}$. Otherwise we have $$\alpha_{n+2}<4(\alpha_{n+1})^{1/2^A}<4\cdot\left(\frac{1+\epsilon}{2}\right)^{1/A2^A}4\cdot2^{-(1+\varDelta)A2^A}<\left(\frac{1+\epsilon}{2}\right)^{1/A2^A}2^{-(1+\varDelta/2)A2^A}.$$
	
	Repeating the same procedure for $\alpha_{n+3},\alpha_{n+2},\alpha_{n+1},\alpha_n$ we obtain 
	$$\min\{\alpha_{n+3},\alpha_{n+2}\}<\left(\frac{1+\epsilon}{2}\right)^{3/A}\max\{\alpha_n,\alpha_{n-1}\}.$$
	
	Note that either $\alpha_{n+2}<\alpha_{n+3}$ and we improve the estimate for $\alpha_{n+2}$:
	$$\alpha_{n+2}<\left(\frac{1+\epsilon}{2} \right)^{3/A}\cdot2^{-(1+\varDelta/2)A2^{2A}},$$
	or we get the same estimate for $\alpha_{n+3}$.
	
	It is easy to see that this dichotomy preserves on the next steps as well. The proof follows.	
\end{proof}

Finally, we are ready to prove Theorem~\ref{thm:hausdorff_dim_unbounded}.

\begin{proof}[Proof of Theorem~\ref{thm:hausdorff_dim_unbounded}]
	As for the Fibonacci map, quadratic $f(x)=x^2+c$ can be renormalized to a polynomial-like map of type (2,1) (see \cite[Section~7. Renormalization of a quadratic-like Fibonacci map]{LM}). All such maps are quasi-symmetrically conjugate by \cite[Corollary~7.4]{LM}. Hence, it is enough to prove the theorem for an arbitrary representative. The construction in \cite[Example~7.1]{LM} works for arbitrary $\theta$, so we may assume that the first finitely many $\delta_n^i$'s are as small as needed after choosing an appropriate representative of the conjugacy class.
	
	In particular, we may assume that for some $i$, $\alpha_{{N_i}-2}$ satisfies conditions of Lemma~\ref{lmm:big_lambdas_decrease} for $A=a_{N_{i}}$ and some $\delta>0$ which we choose later. We have
	$$\alpha_{N_i+k}<\left(\frac{1+\epsilon}{2}\right)^{(k-1)/A2^A}2^{-(1+\varDelta/2)A2^A},$$
	and want to compute how big should be $N_{i+1}$ (depending on $a_{N_{i+1}}$) so that $\alpha_{N_{i+1}-2}$, obtained from the estimate above, satisfied the conditions of Lemma~\ref{lmm:big_lambdas_decrease} as well, but for $A=N_{i+1}$. Thus, we have inequality
	$$\left(\frac{1+\epsilon}{2}\right)^{(N_{i+1}-N_i-3)/a_{N_i} 2^{a_{N_i}}}2^{-(1+\varDelta/2)a_{N_i} 2^{a_{N_i}}}<2^{-(1+\varDelta)a_{N_{i+1}}2^{4a_{N_{i+1}}}}.$$
	This is equivalent to
	$$N_{i+1}-N_i>3+\frac{\log 2}{\log\left(\frac{2}{1+\epsilon}\right)}\left((1+\varDelta)a_{N_{i+1}}2^{4a_{N_{i+1}}}-(1+\varDelta/2)a_{N_i} 2^{a_{N_i}}\right)a_{N_i} 2^{a_{N_i}}.$$
	The right hand side smaller than
	$$3+\frac{\log 2}{\log\left(\frac{2}{1+\epsilon}\right)}(1+\varDelta)a_{N_i}a_{N_{i+1}}2^{4a_{N_{i+1}}+a_{N_i}}<2^{(5+\tau)a_{N_{i+1}}}$$
	if $N_i$ is big enough. Hence, if $N_i$ satisfies conditions of Theorem~\ref{thm:hausdorff_dim_unbounded}, we have bounds on $\alpha_{N_i+k}$ for $0<k\leq N_{i+1}-N_i$:
	$$\alpha_{N_i+k}<\left(\frac{1+\epsilon}{2}\right)^{(k-1)/a_{N_i} 2^{a_{N_i}}}2^{-(1+\varDelta/2)a_{N_i} 2^{a_{N_i}}}.$$
	
	Denote $\gamma_i=\left(\frac{1+\epsilon}{2}\right)^{1/a_{N_i} 2^{a_{N_i}}}$ and $D_n=d_n^{a_{n+1}}$. We obtain
	$$D_{N_i+k}<D_{N_i}\gamma_i^{1+2+\dots+(k-1)}=D_{N_i}\gamma_i^{(k^2-k)/2}$$
	for $0<k\leq N_{i+1}-N_i$.
	
	When $k$ is close to $N_{i+1}-N_i$,
	$$D_{N_i+k}<D_{N_i} \gamma_i^{N_{i+1}^2/3}<D_{N_i}\left(\frac{1+\epsilon}{2}\right)^{\frac{N_{i+1}^2}{3a_{N_i} 2^{a_{N_i}}}}<D_{N_i}\left(\frac{1+\epsilon}{2}\right)^{N_{i+1}^{3/2}}.$$
	
	At the same time, the amount of intervals in $M^{N_{i}+k}$ is less than $$q_{N_i}(a_{N_{i}}+1)^{k}<q_{N_i}e^{2N_{i+1}\log(a_{N_{i+1}})}.$$
	
	As in the Lemma~\ref{lmm:dim=0}, one obtains that all intervals of $M^n$ are of comparable size for big $n$. That is, the lengths of these intervals decrease much faster than their number whatever Hausdorff measure we choose (exactly as in Lemma~\ref{lmm:dim=0}). Hence, the Hausdorff  dimension of $\mathcal{O}$ is equal to $0$ in this case as well.
\end{proof}

\section{Appendix}
\label{sec:appendix}

\subsection{Irrational rotations and Ostrowski numeration system}

Details and proofs can be found in \cite{Arnoux}.

Every irrational angle $\theta\in (0,1)$ is uniquely represented by its continued fraction
$$\theta=[a_1,a_2,a_3,...]=\cfrac{1}{a_1+\cfrac{1}{a_2+\cfrac{1}{a_3+\cdots}}}, a_n\in\mathbb{N},$$
which encodes dynamical properties of the circle rotation by $\theta$. In particular, the denominators $q_n$ of the truncated fractions
$$\frac{p_n}{q_n}=[a_1,a_2,...,a_n]=\cfrac{1}{a_1+\cfrac{1}{\ddots+\cfrac{1}{a_n}}}$$ are exactly the times of closest recurrence of $0$ under rotation by $\theta$, except that in the case $a_1>1$ one more time of closest recurrence is $q_0=1$. They can be computed by the recurrent formula $q_{n+1}=a_{n+1} q_n+q_{n-1}$. Analogous formula holds for $p_n$: $p_{n+1}=a_{n+1} p_n+p_{n-1}$ if we assume $p_0=0$.

Given an irrational $\theta=[a_1,a_2,...]\in(0,1)$, every integer $k$ can be written in a unique way as a sum $k=\sum_{n=0}^{\infty}\gamma_n q_n$ where $0\leq\gamma_0\leq a_1-1$ and $0\leq\gamma_n\leq a_{n+1}$ for $n>0$, only finitely many $\gamma_n$'s are nonzero and if the digit $\gamma_n$ is equal to $a_{n+1}$, then $\gamma_{n-1}=0$. One represents such $k$ as $[\gamma_0\gamma_1\gamma_2\cdots]$ starting from the smallest term. The representation $[00...00\gamma_n 00\cdots]$ with $\gamma_n=1$ corresponds to $k=q_n$. Further, one can also consider formal infinite sums $\varkappa=\sum_{n=1}^{\infty}\gamma_n q_n$ where $0\leq\gamma_0\leq a_1-1$ and $0\leq\gamma_n\leq a_{n+1}$ for $n>0$ and if the digit $\gamma_n$ is equal to $a_{n+1}$, then $\gamma_{n-1}=0$. Such infinite sums are limits of finite words $[\gamma_0\gamma_1...\gamma_m00\cdots]$ in the product topology on $\mathbb{N}_0^{\mathbb{N}_0}$.

Furthermore, one can obtain a similar representation of all real numbers on the unit circle. Denote $\theta_n:=q_n\alpha-p_n$. Then every real $x\in[-\theta,1-\theta)$ can be presented (non-uniquely) as the infinite sum
\begin{equation}
	\label{eqn:ostrowski_reals}
	x=\sum_{n=0}^\infty \gamma_n\theta_n,
\end{equation}
where $0\leq\gamma_0\leq a_1-1$ and $0\leq\gamma_n\leq a_{n+1}$ for $n>0$ and if the digit $\gamma_n$ is equal to $a_{n+1}$, then $\gamma_{n-1}=0$. If we assume additionally $\gamma_n\neq a_{n+1}$ for infinitely many even integers $n$, then the representation in formula~\ref{eqn:ostrowski_reals} is unique. In this setting addition of $1$ to $[\gamma_0\gamma_1\gamma_2\cdots]$ corresponds to rotation of $x$ either by $\theta$ if $a_1>1$, or by $-\theta$ otherwise.

Let $R=R(\theta)$ be a rotation operator of the unit circle $S^1$ by angle $\theta\in(0,1)$. The \emph{rotation sequence} $s(x)=(s_0(x)s_1(x)s_2(x)...)$ of a point $x\in S^1$ is defined as follows. For $n\geq 0$, $s_n(x)=0$, if $R^n(x)\in (-\theta,0]$, then $s_n(x)=1$, and $s_n(x)=0$ otherwise (for full generality one would have to consider additionally the definition with interval $[-\theta,0)$ but in our setting it is not necessary). We consider only $s(\theta)$ for irrational $\theta$. In this case $s(\theta)$ is the so called (``left special'') Sturmian word. A basic example is the Fibonacci word $s((\sqrt{5}-1)/2)=s([1,1,1,1...])$.

From the definitions it is easy to see that for every Sturmian word either ``1'' or ``0'' is isolated, that is, does not appear twice in a row; moreover, $s(\theta)$ starts with the symbol which is not isolated. Having this in mind, one can define a ``recoding'' (or ``compression'') of the Sturmian word: if ``0'' is isolated, then we replace every neighboring pair of symbols ``10'' by ``1''; if ``1'' is isolated, then we replace every neighboring pair of symbols ``01'' by ``0''. One can show that the recoded sequence is again a Sturmian word. Moreover, if $\theta=[a_1,a_2,a_3,...]$, the recoded word coincides with $s(\sigma(\theta))$ where $\sigma([b_1,b_2,b_3,...])$ is by definition equal either to $[b_1-1,b_2,b_3,...]$ if $b_1>1$, or to $[b_2,b_3,b_4,...]$ otherwise. This recoding encodes symbolically a renormalization of a circle rotation and can be iterated infinitely many times. 

\subsection{Schwarz lemma and Koebe principle}

We use the same statements of Schwarz lemma and Koebe principle as in the appendix of \cite{LM}. For convenience of the reader we provide the appendix here almost without changes.

Consider four points $a<b<c<d$ and two nested intervals $L=[a,d]$ and $H=[b,c]$. The \emph{Poincar\'e length} of $H$ in $L$ is defined as
$$[H:L]:=\log\frac{(d-b)(c-a)}{(d-c)(b-a)}.$$

For a $C^3$ diffeomorphism $g:(L,H)\to(L',H')$ its Schwarzian derivative is defined as 
$$Sg:=\frac{g'''}{g'}-\frac{3}{2}\left(\frac{g''}{g'}\right)^2.$$

Note that for a quadratic polynomial $x^2+c$ its Schwarzian derivative $-3/2x^2$ is strictly negative away from the critical point.

\begin{lmm}[Schwarz Lemma]
	If $f$ has non-negative Schwarzian derivative, then it contracts Poincar\'e length $[H':L']\leq[H:L]$.
\end{lmm}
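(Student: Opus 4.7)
The plan is to reduce the lemma to a standard maximum-principle argument after passing to the Poincar\'e coordinate on $L$.

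First, I would normalize via an affine map $M \colon L' \to L$ sending $f(a) \mapsto a$ and $f(d) \mapsto d$. Affine maps are M\"obius, so they preserve the Poincar\'e length of subintervals (by cross-ratio invariance) and have vanishing Schwarzian; the chain rule $S(h \circ g) = (Sh \circ g)(g')^2 + Sg$ therefore yields $S(M \circ f) = Sf \geq 0$. Since $M$ is a Poincar\'e isometry, $[H':L'] = [M(H'):L] = [(M \circ f)(H):L]$, and it suffices to prove the statement for $G := M \circ f \colon L \to L$, a $C^3$ diffeomorphism fixing $\partial L$ with $SG \geq 0$: namely $[G(H):L] \leq [H:L]$ for every subinterval $H \subset L$.

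Next, I would pass to the Poincar\'e coordinate $\tau \colon L \to \mathbb{R}$ defined by $\tau(x) = \log\frac{x-a}{d-x}$, so that $\tau'(x) = \rho_L(x) := (d-a)/((x-a)(d-x))$. Then $\tau$ is an isometry from $L$ with its Poincar\'e metric onto $\mathbb{R}$ with the flat metric, and setting $\tilde{G} := \tau \circ G \circ \tau^{-1} \colon \mathbb{R} \to \mathbb{R}$, the claim reduces to $\tilde{G}'(t) \leq 1$ for all $t$. The endpoint condition $G(\partial L) = \partial L$ gives $\tilde{G}(\pm\infty) = \pm\infty$ and, via L'H\^opital, $\lim_{t \to \pm\infty} \tilde{G}'(t) = 1$. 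A direct computation using the composition formula for the Schwarzian, together with the identities $S\tau = (\tau')^2/2$ and $S(\tau^{-1}) = -1/2$ (both verified by elementary differentiation of the explicit $\tau$), yields the key identity
$$S\tilde{G}(t) \;=\; \tfrac{1}{2}\bigl(\tilde{G}'(t)^2 - 1\bigr) \,+\, \frac{SG(\tau^{-1}(t))}{\tau'(\tau^{-1}(t))^2}.$$
In particular, $SG \geq 0$ forces $S\tilde{G}(t) \geq \tfrac{1}{2}\bigl(\tilde{G}'(t)^2 - 1\bigr)$ for every $t \in \mathbb{R}$.

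Finally, I would close by a standard maximum-principle argument. Suppose, for contradiction, that $\tilde{G}'(t_1) > 1$ for some $t_1$. Continuity of $\tilde{G}'$ on $\mathbb{R}$ combined with the limits $\tilde{G}'(\pm\infty) = 1$ forces $\tilde{G}'$ to attain its supremum at some interior point $t_0$ with $\tilde{G}'(t_0) > 1$. There we have $\tilde{G}''(t_0) = 0$ and $\tilde{G}'''(t_0) \leq 0$, so $S\tilde{G}(t_0) = \tilde{G}'''(t_0)/\tilde{G}'(t_0) \leq 0$; but this contradicts $S\tilde{G}(t_0) \geq \tfrac{1}{2}(\tilde{G}'(t_0)^2 - 1) > 0$. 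Hence $\tilde{G}' \leq 1$ on all of $\mathbb{R}$, which translates back to $G$ (and therefore $f$) contracting Poincar\'e length. The main technical input is the Schwarzian identity for $\tilde{G}$; its derivation is a short application of the composition formula, but recognizing that the Poincar\'e coordinate is exactly the conformal change converting ``$SG \geq 0$'' into the inequality $2 S\tilde{G} \geq \tilde{G}'^2 - 1$ --- a form to which the maximum principle applies without any further bookkeeping --- is the conceptual heart of the proof.
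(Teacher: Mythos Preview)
Your argument is correct. The paper itself does not prove this lemma: it is stated in the appendix as a standard tool lifted verbatim from \cite{LM}, with no proof given, so there is no in-paper argument to compare against. Your route --- affinely normalize to a self-map $G$ of $L$ fixing the endpoints, pass to the Poincar\'e coordinate $\tau(x)=\log\frac{x-a}{d-x}$, establish the identity
\[
S\tilde G(t)=\tfrac12\bigl(\tilde G'(t)^2-1\bigr)+\frac{SG(\tau^{-1}(t))}{\tau'(\tau^{-1}(t))^2},
\]
and then run a maximum-principle contradiction on $\tilde G'$ --- is one of the clean standard proofs. The auxiliary computations ($S\tau=\tfrac12(\tau')^2$, $S(\tau^{-1})=-\tfrac12$, the endpoint limits $\tilde G'(\pm\infty)=1$, and the chain-rule derivation of the displayed identity) all check out, and the compactness step ensuring the supremum of $\tilde G'$ is attained is handled correctly by the boundary limits. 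An equally common alternative, closer in spirit to how the result is often cited in one-dimensional dynamics, proceeds via the minimum principle for $|f'|$ (non-negative Schwarzian forbids strict interior minima of $|f'|$ on subintervals) and a direct cross-ratio estimate; your analytic approach via the conjugated Schwarzian is arguably more transparent about where the inequality $Sf\ge 0$ enters.
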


\begin{lmm}[Koebe principle]
	Let $g$ has non-negative Schwarzian derivative. If $[H:L]\leq l$, then $\absv{g'(x)/g'(y)}\leq K(l)$ for any $x,y\in H$ and $K(l)=1+O(l)$ as $l\to 0$.
\end{lmm}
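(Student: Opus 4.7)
The plan is to deduce the Koebe distortion estimate from the Schwarz Lemma by a standard infinitesimal cross-ratio argument.

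First, I would cast the Schwarz Lemma in pointwise (infinitesimal) form. Applying it to the nested pair $[t,t+\epsilon] \subset L=[a,d]$ and expanding the inequality $[g[t,t+\epsilon]:g(L)] \leq [[t,t+\epsilon]:L]$ to first order in $\epsilon$, one obtains
$$g'(t)\left(\frac{1}{g(t)-g(a)}+\frac{1}{g(d)-g(t)}\right) \leq \frac{1}{t-a}+\frac{1}{d-t}. \quad (\ast)$$
Writing $\rho_I(t):=(\sup I - \inf I)/((t-\inf I)(\sup I - t))$ for the Poincar\'e density of an interval $I$, $(\ast)$ is the pointwise statement $g^\ast \rho_{g(L)} \leq \rho_L$.

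Second, to obtain a matching lower bound for $g'$, I would apply the Schwarz Lemma to a dual cross-ratio configuration (with the roles of inner and outer intervals adjusted so that Schwarz controls the reciprocal quantity), and combine the result with $(\ast)$ to pin down $g'(t)$, up to a factor $1+O(l)$, in terms of the ratio $\rho_L(t)/\rho_{g(L)}(g(t))$. The precise form exploits that $(\ast)$ is saturated exactly when $g$ is a M\"obius transformation, so the discrepancy is controlled by $Sg$ integrated against a kernel of total size $O(l)$ along the Poincar\'e geodesic from the base point.

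Third, since the Poincar\'e density $\rho_L$ varies on $H$ by a factor $1+O([H:L])=1+O(l)$ (a direct calculation from the explicit formula for $\rho_L$), and the same is true for $\rho_{g(L)}$ on $g(H)$ because $[g(H):g(L)] \leq [H:L] \leq l$ by the Schwarz Lemma, the previous step gives
$$\left|\frac{g'(x)}{g'(y)}\right| = \frac{\rho_L(x)/\rho_{g(L)}(g(x))}{\rho_L(y)/\rho_{g(L)}(g(y))}\cdot(1+O(l)) = 1+O(l) =: K(l)$$
for every $x,y \in H$, which is the desired Koebe bound with $K(l) \to 1$ as $l \to 0$.

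The main obstacle is step two: the naive attempt to apply Schwarz to $g^{-1}$ fails because inversion reverses the sign of the Schwarzian. The cleanest way around this, following the exposition in \cite{LM}, is to use the identity $(\log g')''(t) = Sg(t) + \tfrac12 ((\log g')'(t))^2$ and integrate it over a subinterval of Poincar\'e diameter $\leq l$, combining the definite sign of $Sg$ with the explicit form of the Poincar\'e density to produce the required matching lower bound.
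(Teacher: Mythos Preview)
The paper does not prove this lemma; it is quoted verbatim from the appendix of \cite{LM} as a standard tool. So there is no ``paper's proof'' to compare against, and I evaluate your attempt on its own.

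There is a genuine gap at step~2. Your step~1 is correct: the infinitesimal form of the Schwarz Lemma gives the one-sided bound $g'(t)\,\rho_{g(L)}(g(t))\le \rho_L(t)$. But the matching lower bound you need for step~3, namely $g'(t)\ge (1-O(l))\,\rho_L(t)/\rho_{g(L)}(g(t))$ for $t\in H$, is false in general. The ratio $g'(t)\,\rho_{g(L)}(g(t))/\rho_L(t)$ depends only on $t$, $g$ and $L$, not on $H$ or $l$; it equals $1$ exactly when $g$ is M\"obius, and for a fixed non-M\"obius $g$ it stays bounded away from $1$ on any interior point of $L$ no matter how small $l$ is made. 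So the displayed identity in your step~3 cannot hold. Your proposed rescue via $(\log g')''=Sg+\tfrac12((\log g')')^2$ only yields convexity of $\log g'$ (since $Sg\ge 0$), and convexity alone does not bound the oscillation of $\log g'$ on $H$ in terms of $l$: you would still need a priori control of $(\log g')'$ near $\partial H$, which you do not have. There is also no ``dual cross-ratio configuration'' with the same Schwarz inequality that produces the reverse bound; inverting the roles forces you onto $g^{-1}$, whose Schwarzian has the wrong sign, exactly the obstruction you already noted.

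The standard argument bypasses the Poincar\'e density entirely. The condition $Sg\ge 0$ is equivalent to concavity of $h:=|g'|^{-1/2}$, since $h''=-\tfrac12\,Sg\cdot h$. For $x<y$ in $H=[b,c]\subset L=[a,d]$, concavity of $h>0$ on $L$ gives
\[
\frac{h(x)}{h(y)}\le\frac{d-x}{d-y},\qquad \frac{h(y)}{h(x)}\le\frac{y-a}{x-a},
\]
hence $|g'(x)/g'(y)|=(h(y)/h(x))^2\le\max\bigl\{(\tfrac{c-a}{b-a})^2,(\tfrac{d-b}{d-c})^2\bigr\}$. Using $e^l-1=\dfrac{(d-a)(c-b)}{(d-c)(b-a)}$ one checks $\tfrac{c-a}{b-a}=1+\tfrac{c-b}{b-a}\le 1+(e^l-1)\tfrac{d-c}{d-a}\le e^l$, and symmetrically for the other factor, giving $K(l)=e^{2l}=1+O(l)$. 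This is the route taken in \cite{LM} and \cite{deMelo_vanStrien}.
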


These two lemmas can also be generalized. Consider a chain of interval diffeomorphisms
$$I_1\to J_1\to\cdots\to I_n\to J_n$$
where $g_i:I_i\to J_i$ have non-negative Schwarzian derivative while  $h_i:J_i\to I_{i+1}$ are $C^2$ smooth. Denote $F:=g_n\circ h_{n-1}\circ g_{n-1}\circ\cdots\circ h_1\circ g_1$, and let $G_i\subset\interior I_i$ and $H_i\subset\interior J_i$ be closed subintervals related by diffeomorphisms.

Denote by $\mathbf{h}$ the family of maps $h_i$, by $\mathbf{I}$ the family of intervals $I_i$, etc. Let $\abs{h_i}=\max\absv{h_i''(x)/h_i'(x)}$, $\abs{\mathbf{h}}=\max\abs{h_i}$ be the ``maximal non-linearity'' of $\mathbf{h}$, $\absv{\mathbf{I}}=\sum\absv{I_i}$ be the total length of $\mathbf{I}$, $l=[G_1:I_1]$.

\begin{lmm}[Schwarz Lemma, smooth version]
	Expansion of the Poincar\'e length by the map $F$ is controlled by $\mathbf{h}$ in the manner
	$$[H_n:J_n]\leq l+O(\absv{\mathbf{J}})$$
	with the constant depending on $\abs{\mathbf{h}}$.
\end{lmm}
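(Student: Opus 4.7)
The plan is to telescope along the chain, using the original Schwarz lemma at each $g_i$ (where Poincar\'e length does not expand) and a quantitative $C^2$ cross-ratio distortion estimate at each $h_i$ (where it can expand, but only by an amount controlled by $\abs{h_i}\cdot\absv{J_i}$).

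The key one-step bound I would prove first is the following: for a $C^2$ diffeomorphism $h:J\to I$ with $\absv{h''/h'}\leq M$ on $J$ and any subinterval $H\subset J$,
$$\bigl|\, [h(H):I] - [H:J] \,\bigr| \leq 2M\absv{J}.$$
This is a direct mean-value-theorem computation. Writing $J=[b_1,b_2]$ and $H=[a_1,a_2]$, each of the four differences appearing in the cross-ratios satisfies $h(x)-h(y)=(x-y)h'(\xi_{x,y})$ for some $\xi_{x,y}\in J$, and the four linear factors $(x-y)$ reassemble exactly into the cross-ratio defining $[H:J]$, leaving
$$[h(H):I] - [H:J] = \log\frac{h'(\xi_3)\,h'(\xi_2)}{h'(\xi_4)\,h'(\xi_1)}.$$
Since $\absv{\log h'(x)-\log h'(y)}=\bigl|\int_y^x (h''/h')(t)\,dt\bigr|\leq M\absv{J}$ for any $x,y\in J$, the right-hand side is bounded in absolute value by $2M\absv{J}$.

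Next I would telescope. By hypothesis $g_i(G_i)=H_i$ and $h_i(H_i)=G_{i+1}$. The original Schwarz lemma applied to $g_i$ gives $[H_i:J_i]\leq [G_i:I_i]$, while the one-step bound above applied to $h_i$ gives $[G_{i+1}:I_{i+1}]\leq [H_i:J_i]+2\abs{h_i}\,\absv{J_i}$. Chaining these $2n-1$ inequalities, the $g_i$-contributions produce no loss and the $h_i$-contributions accumulate to at most $2\abs{\mathbf{h}}\sum_{i=1}^{n-1}\absv{J_i}\leq 2\abs{\mathbf{h}}\,\absv{\mathbf{J}}$, whence
$$[H_n:J_n]\leq l+2\abs{\mathbf{h}}\,\absv{\mathbf{J}},$$
which is the claimed bound with the implied constant depending only on $\abs{\mathbf{h}}$.

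There is no real obstacle: the only non-bookkeeping step is the one-step cross-ratio estimate, and that is the standard calculation underlying the $C^2$ Koebe distortion theorem and follows from a single invocation of the mean value theorem together with the pointwise bound on $\log h'$. Everything else is algebraic telescoping.
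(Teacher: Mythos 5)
Your proof is correct, and it is the standard telescoping argument: contract Poincar\'e length through each $g_i$ by the Schwarz Lemma, and bound the expansion through each $C^2$ map $h_i$ by the cross-ratio distortion estimate $\bigl|[h_i(H_i):I_{i+1}]-[H_i:J_i]\bigr|\leq 2\abs{h_i}\absv{J_i}$, obtained from the mean value theorem and the oscillation bound on $\log h_i'$. The paper does not give its own proof here but reproduces the appendix of Lyubich--Milnor, whose argument is exactly this iteration of the Schwarz Lemma together with a one-step $C^2$ nonlinearity bound, so your proof matches the intended one.
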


\begin{lmm}[Koebe principle, smooth version]
	Distortion of $F|_{G_1}$ can be estimated as
	$$\left|\frac{F'(x)}{F'(y)}\right|\leq K(l;\abs{\mathbf{h}},\absv{\mathbf{J}})$$
	where $K=1+O(l+\absv{\mathbf{J}})$ as $\absv{\mathbf{J}},l\to 0$ with the constant depending on $\abs{\mathbf{h}}$.
\end{lmm}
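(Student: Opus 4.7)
The plan is to work at the level of cross-ratios, exploiting that non-negative-Schwarzian diffeomorphisms contract the classical cross-ratio of four points, while the smooth maps $h_i$ distort it by a factor controlled by their nonlinearity. The first step is to invoke the smooth version of Schwarz lemma (just stated) to propagate the Poincar\'e-length bound through the entire chain and obtain $[H_k:J_k]\leq l+O(\absv{\mathbf{J}})$ uniformly in $k$, with the implicit constant depending only on $\abs{\mathbf{h}}$. Each $g_i$ contracts the Poincar\'e length (classical Schwarz), while each $h_i$ expands it by at most $O(\abs{h_i}\cdot\absv{J_i})$, and these contributions sum to $O(\absv{\mathbf{J}})$.

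Second, I would track the evolution of the cross-ratio of four points $a<x<y<b$ (with $[a,b]$ the host interval and $x,y\in G_1$) along the chain. The classical cross-ratio inequality says that any diffeomorphism with non-negative Schwarzian derivative does not expand the cross-ratio $\frac{(y-x)(b-a)}{(y-a)(b-x)}$, so the factors $g_i$ contribute no loss in the relevant direction. For each smooth factor $h_i$, a Taylor expansion on $H_i$ using $|h_i''/h_i'|\leq\abs{h_i}$ shows that the cross-ratio is multiplied by a factor of $1+O(\abs{h_i}\cdot\absv{H_i})$. Composing along the chain, the total cross-ratio distortion produced by $F$ is at most
$$\prod_i\bigl(1+O(\abs{h_i}\cdot\absv{H_i})\bigr)\leq\exp\bigl(\abs{\mathbf{h}}\cdot\absv{\mathbf{J}}\bigr)=1+O(\absv{\mathbf{J}}).$$

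Third, the cross-ratio control must be translated into a derivative-ratio bound. Fixing $a,b$ as the endpoints of $I_1$ (or any outer bounding interval) and letting $y\to x$ inside $G_1$, the cross-ratio formula reduces to an expression involving $F'(x)/F'(y)$ together with the ratios $(y-a)/(x-a)$ and $(b-x)/(b-y)$, the latter controlled by the Poincar\'e length $[G_1:I_1]=l$. Combining, one extracts
$$\Bigl|\log\tfrac{F'(x)}{F'(y)}\Bigr|\leq C\cdot[G_1:I_1]+O(\absv{\mathbf{J}})=O(l+\absv{\mathbf{J}}),$$
which is exactly the claim $K=1+O(l+\absv{\mathbf{J}})$. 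The main obstacle is the second step: one must obtain a \emph{multiplicative} cross-ratio estimate for each $h_i$ whose error is integrated over $H_i$ rather than scaling with a pointwise derivative, so that the accumulation along the chain stays of order $\absv{\mathbf{J}}$ rather than $n\cdot\absv{\mathbf{J}}$. A na\"ive pointwise bound on $h_i''$ would not suffice; the saving comes from the identity $\log h_i'(p)-\log h_i'(q)=\int_q^p h_i''/h_i'$, together with $\int_{H_i}|h_i''/h_i'|\leq\abs{h_i}\cdot\absv{H_i}$, which is what keeps the final estimate sharp.
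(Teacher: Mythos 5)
The paper does not actually prove this lemma: it is stated as a reference tool and the authors explicitly say they reproduce the appendix of \cite{LM} ``almost without changes.'' So there is no in-paper proof to compare against, and your proposal has to be judged as a reconstruction of the standard argument (which is what \cite{LM}, following Sullivan and de Melo--van Strien, use).

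Your overall plan is the standard one and is sound: propagate a Poincar\'e-length bound through the chain via the smooth Schwarz lemma, control cross-ratio distortion multiplicatively (Schwarzian factors harmless, smooth factors controlled by integrated nonlinearity), and recover a derivative-ratio bound from cross-ratios by a limiting argument. Two points, however, are stated imprecisely. First, in Step~2 the per-factor cross-ratio estimate should read $1+O(\abs{h_i}\cdot\absv{J_i})$, not $1+O(\abs{h_i}\cdot\absv{H_i})$: the four points you are tracking are the images of $a,x,y,b$, and the outer pair are the endpoints of $J_i$ (the chain carries $\partial I_1$ onto $\partial J_i$), so the nonlinearity of $h_i$ has to be integrated over all of $J_i$, not just $H_i$. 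The total still sums to $O(\absv{\mathbf{J}})$, so your conclusion is correct, but the stated per-step bound is too strong. (The $\int_{H_i}\absv{h_i''/h_i'}$ saving you mention in the final paragraph is the right estimate for the \emph{direct} comparison $\log h_i'(u_i)-\log h_i'(v_i)$ with $u_i,v_i\in H_i$, but that is a different quantity from the cross-ratio distortion on the full four-tuple --- you are conflating the two.) Second, ``letting $y\to x$'' in Step~3 does not by itself produce $F'(x)/F'(y)$; what actually happens is that the limit of the cross-ratio ratio at a single point $x$ yields an expression for $F'(x)$ in terms of macroscopic lengths in $I_1$ and $J_n$, and you then form the quotient of the resulting identities at $x$ and at $y$, with the ratios $(y-a)/(x-a)$, $(b-x)/(b-y)$ and their images controlled by $[G_1:I_1]=l$ and by Step~1. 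With that clarification, the argument goes through and gives $K=1+O(l+\absv{\mathbf{J}})$ as claimed.
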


\begin{sidewaysfigure}[h]
	\vspace{5cm}
	\includegraphics[width=\textwidth]{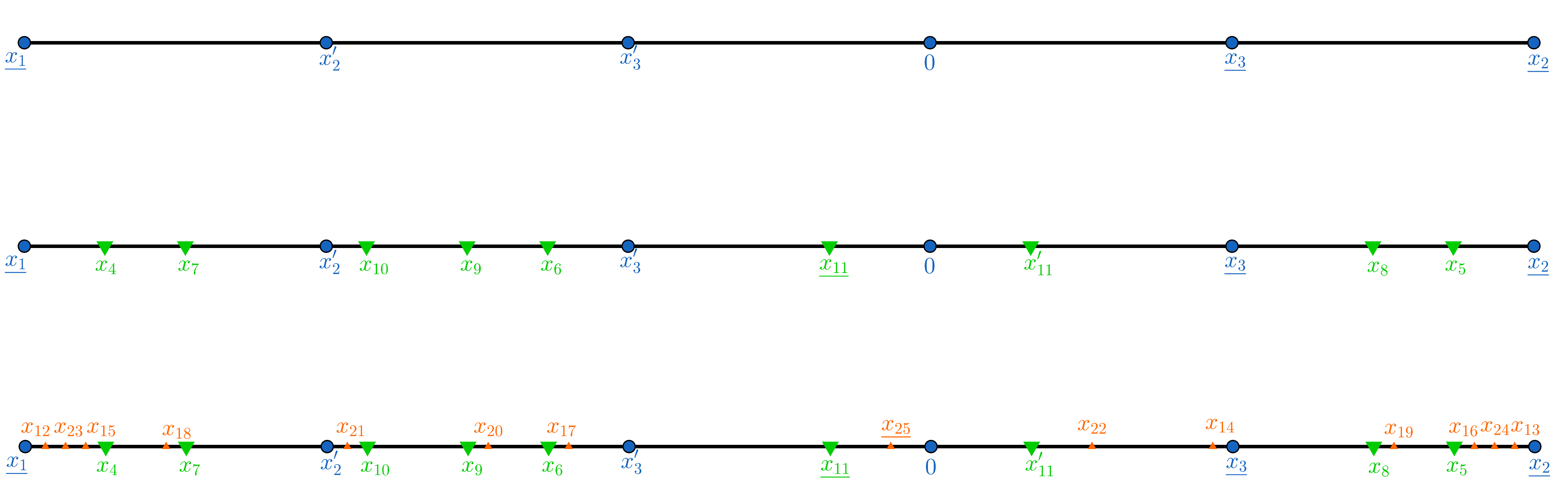}
	\caption{Schematic picture of critical orbit combinatorics of a $\theta$-recurrent map $f$ for $\theta=[1,1,1,3,2,...]$. Closest recurrence points are $\underline{x_1},\underline{x_2},\underline{x_3},\underline{x_{11}},\underline{x_{25}},\dots$. By $x_2,x_3',x_{11}'$ we denote the points satisfying $f(\underline{x_2})=f(x_2')$ etc.}
	\label{pic:example}
\end{sidewaysfigure}

\begin{sidewaysfigure}[h]
	\vspace{9cm}
	\captionsetup{singlelinecheck=off}
	\includegraphics[width=\textwidth]{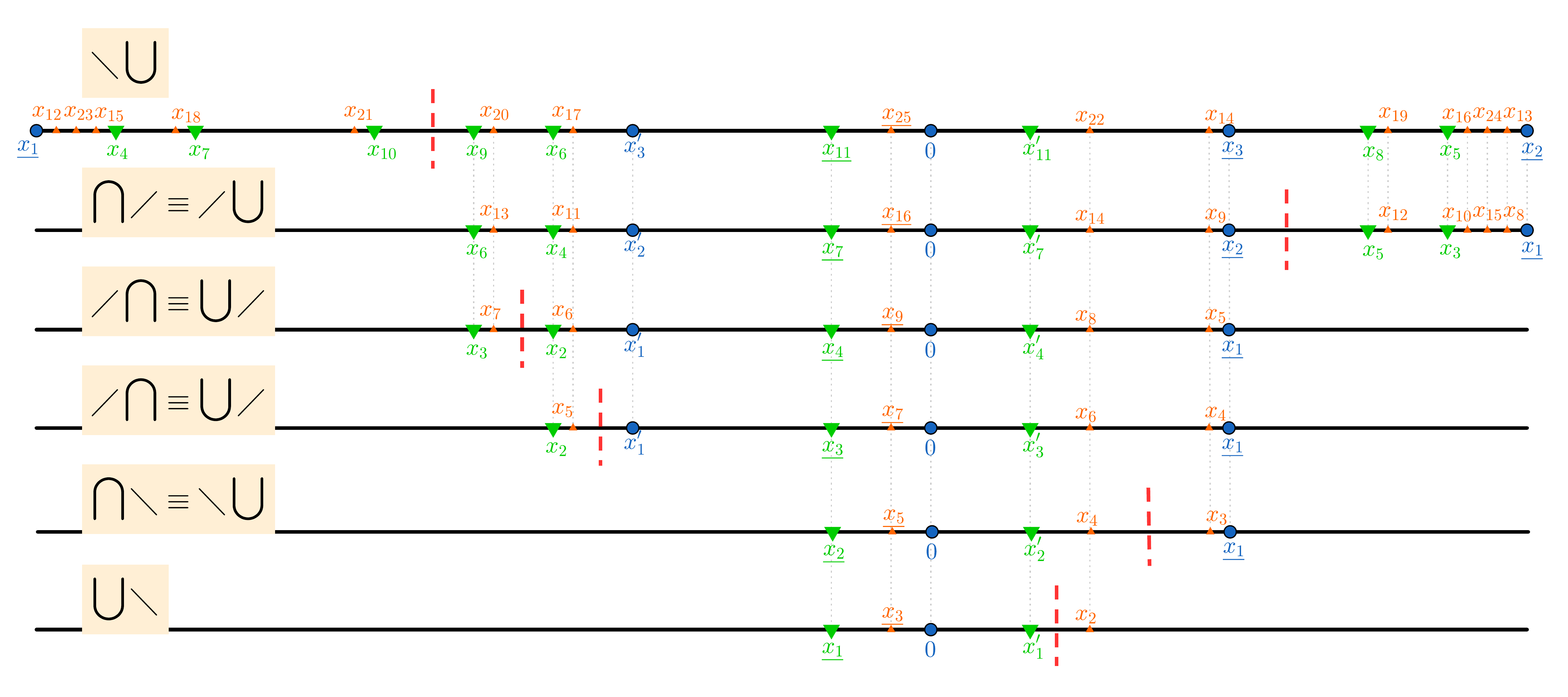}
	\caption[foo bar]{First 6 renormalizations for $\theta=[1,1,1,3,2,...]$ and type $\diagdown\bigcup$. For convenience scalings are avoided (that is why the non-normalized types with $\bigcap$ are present). Red dotted line depicts separation between $J$ and $T$, i.e.\ between the unimodal and the monotonic parts.
		
	Description by steps:
	\begin{enumerate}
		\item $\theta=[1,1,1,3,2,...]$, best recurrence times are $q_1=1,q_2=2,q_3=3,q_4=11,q_5=25,...$;
		\item $[1,1,3,2,...]$, $q_1=1,q_2=2,q_3=7,q_4=16,...$;
		\item $[1,3,2,...]$, $q_1=1,q_2=4,q_3=9,...$;
		\item $[3,2,...]$, $\mathbf{q_0=1},q_1=3,q_2=7,...$;
		\item $[2,2,...]$, $\mathbf{q_0=1},q_1=2,q_2=5,...$;
		\item $[1,2,...]$, $\mathbf{q_0=1},q_1=3,...$.
	\end{enumerate}
	
	}
	\label{pic:renorm}
\end{sidewaysfigure}

\end{document}